\DeclareMathAlphabet{\mathpzc}{OT1}{pzc}{m}{it}
\theoremstyle{plain}
\newtheorem{theorem}{Theorem}[section]
\newtheorem{prop}[theorem]{Proposition}
\newtheorem{lemma}[theorem]{Lemma}
\newtheorem{cor}[theorem]{Corollary}
\newtheorem{defn}[theorem]{Definition}
\newtheorem{exam}[theorem]{Example}
    \newtheorem{notation}[theorem]{Notation}
    \newtheorem{subsec}[theorem]{}
\newtheorem*{thma}{Theorem A}
\newtheorem*{thmb}{Theorem B}
\newenvironment{myeq}[1][]
{\stepcounter{theorem}\begin{equation}\tag{\thetheorem}{#1}}
{\end{equation}}
\newenvironment{mysubsection}[2][]
{\begin{subsec}\begin{upshape}\begin{bfseries}{#2.}
\end{bfseries}{#1}}
{\end{upshape}\end{subsec}}
\newcommand{\Z}{{\mathbb{Z}}}
\newcommand{\Q}{{\mathbb Q}}
\newcommand{\Hom}{\mathit{Hom}}
\newcommand\A{{\mathcal A}}
\newcommand\FF{{\mathcal F}}
\newcommand\LL{{\mathcal L}}
\newcommand\MM{{\mathcal M}}
\newcommand\PP{{\mathcal P}}
\newcommand\RR{{\mathcal R}}
\newcommand\PMF{{\PP\kern-2pt\MM\FF}}
\newcommand\PML{{\PP\kern-2pt\MM\LL}}
\newcommand{\fsubd}{\mathrel{{\scriptstyle\searrow}\kern-1ex^d\kern0.5ex}}
\newcommand{\bsubd}{\mathrel{{\scriptstyle\swarrow}\kern-1.6ex^d\kern0.8ex}}
\newcommand{\fsubeq}{\mathrel{\raise-.7ex\hbox{$\overset{\searrow}{=}$}}}
\newcommand{\bsubeq}{\mathrel{\raise-.7ex\hbox{$\overset{\swarrow}{=}$}}}
\newcommand{\tsh}[1]{\left\{\kern-.9ex\left\{#1\right\}\kern-.9ex\right\}}
\newcommand{\Lie}{\mathit{Lie}}
\newcommand{\Ker}{\mathit{Ker}}
\begin{document}

\title[]{The homotopy type of the loops on $(n-1)$-connected $(2n+1)$-manifolds}

\author{Samik Basu}
\email{samik.basu2@gmail.com; samikbasu@isical.ac.in}
\address{Stat-Math Unit, Indian Statistical Institute, Kolkata - 700108, India.}

 

\subjclass[2010]{Primary : 55P35, 55Q52 ; \ secondary : 16S37, 57N15.}
\keywords{Homotopy groups, Koszul duality, Loop space, Moore conjecture, Quadratic algebra}

\maketitle

\begin{abstract}
For $n\geq 2$ we compute the homotopy groups of $(n-1)$-connected closed manifolds of dimension $(2n+1)$. Away from the finite set of primes dividing the order of the torsion subgroup in homology, the $p$-local homotopy groups of $M$ are determined by the rank of the free Abelian part of the homology.  Moreover, we show that these $p$-local homotopy groups can be expressed as a direct sum of $p$-local homotopy groups of spheres. The integral homotopy type of the loop space is also computed and shown to depend only on the rank of the free Abelian part and the torsion subgroup. 
\end{abstract}


\setcounter{section}{0}

\section{Introduction}\label{intro}

The computation of homotopy groups of topological spaces is an important problem in topology and also in the solution of many problems in differential topology such as cobordism, and surgery theory. However, even for simple examples like spheres the computations are very hard and mostly unknown. Serre \cite{Ser51} proved that the homotopy groups of spheres are finitely generated, and $\pi_i(S^n)$ is torsion except in the cases $\pi_n(S^n)\cong \Z$ and $\pi_{4n-1}(S^{2n})$ which is isomorphic to $\Z$ direct sum a finite group.  Toda \cite{Tod62} made extensive computations with the homotopy groups of spheres which are still among the best calculations of these groups. A more systematic calculation has been carried out for the stable homotopy groups $\pi_n^s$ which are isomorphic to $\pi_{k+n}S^k$ for $k> n+1$ \cite{Rav86}.

Results about the homotopy groups of manifolds and associated CW complexes and more generally, computations in unstable homotopy theory have been of widespread interest.  Serre \cite{Ser51} showed that a simply connected, finite CW complex $X$ has infinitely many non-zero homotopy groups, and conjectured that such a space with non-trivial $\Z/p\Z$ cohomology has the property that $\pi_n(X)$ contains $\Z/p\Z$ for infinitely many values of $n$. This was proved later by McGibbon and Neisendorfer \cite{McGN85} as an application of Miller's results on the Sullivan conjecture \cite{Mill84}.  

A curious result was observed by James \cite{Jam57Ann} that $2^{2n}$ annihilates the $2$-torsion in $\pi_qS^{2n+1}$ for any $q$.  For odd primes, the analogous result that $p^{2n}$ annihilates the $p$-torsion in $\pi_qS^{2n+1}$, was proved by Toda \cite{Tod56}. These factors are called homotopy exponents of spheres at the prime $p$, and one defines them for any space $X$.  The best possible exponents for spheres at odd primes were conjectured to be of half the order as above by Barratt. This was first proved by Selick \cite{Sel78} for $S^3$ and by Cohen, Moore and Neisendorfer \cite{CMN79, CMN79b,Nei81} for other spheres.  

Apart from spheres, homotopy groups of aspherical manifolds are easy to compute being $K(\pi,1)$-spaces, and homotopy groups of projective spaces are computable in terms of the homotopy groups of spheres. Hilton \cite{Hil55} computed the homotopy groups of a wedge of spheres demonstrating them as a direct sum of homotopy groups of spheres which are mapped onto the wedge by Whitehead products.  Milnor \cite{Ada72} generalized this to the loop space of suspension of a wedge of spaces, a result known as the Hilton-Milnor Theorem. 

The spaces next in line in terms of complexity of cell structures are those obtained by attaching a cell to a wedge of spheres. Primary examples of these are simply connected $4$-manifolds, and more generally $(n-1)$-connected $2n$-manifolds. Under suitable torsion free assumptions $(n-1)$-connected $(2n+1)$-manifolds and even more generally $(n-1)$-connected $d$-manifolds with $d\leq 3n-2$ are also of this type. In recent times there have been a number computations for these manifolds \cite{BeTh14, BaBa15, BaBa16_unpub} which we recall now. 

A simply connected $4$-manifold has a cell structure with a single $4$-cell attached to a wedge of $r$ copies of $S^2$. Curiously if $r\geq 2$ there is a circle bundle over this $4$-manifold whose total space is a connected sum of $(r-1)$ copies of $S^2\times S^3$ \cite{BaBa15, DuLi05}. The methods involve a simple geometric argument followed by an application of Smale's classification of simply connected spin $5$-manifolds \cite{Sma62}. This implies as a corollary that the formula for the homotopy groups of a simply connected $4$-manifold depend only on the middle Betti number. The circle bundle is used in \cite{BaBa15} to compute rational homotopy groups of such a $4$-manifold. 

A natural question regarding the above calculation of the homotopy groups of simply connected $4$-manifolds was that whether such results could be proved independently of geometric results such as Smale's classification. There are now two different solutions of the above problem in \cite{BeTh14} (which follows an idea that first appeared in \cite{BeWu15})  and \cite{BaBa15_unpub}. Both these papers analyze the homotopy type of the loop space of a $(n-1)$-connected $2n$-manifold and prove certain loop space decompositions. Beben and Theriault \cite{BeTh14} prove a decomposition of the loop space by proving a more general result on manifolds $P$ with a certain cofibre $Q$  whose homology resembles $S^m \times S^{m-n}$. Under certain torsion-free and multiplicative conditions on the cohomology of $P$, the loop space of $P$ splits as a product of $\Omega Q$ and $\Omega F$, where $F$ is the homotopy fibre of the map $P\to Q$. The $(n-1)$-connected $2n$-manifolds for $n\neq 2,4,8$ satisfied the hypothesis of the general result and so this implied a  decomposition of  the loop space into simpler factors. For simply connected $4$-manifolds a slight modification of the argument was used to prove the decomposition of the loop space. This kind of argument first appeared in \cite{BeWu15}, where Beben and Wu  have considered $(n-2)$-connected $2n-1$ Poincar\'{e} Duality complexes for $n$ even and by analyzing the attaching map of the top cell, have obtained an analogous loop space decomposition result.  The paper \cite{BeWu15} which appeared as a preprint in 2011, was the first paper which considered loop space decompositions of highly connected manifolds. Fred Cohen has commented that generally the homotopy type of loop spaces of manifolds is quite a complicated subject.  

The second approach to the loop spaces of highly connected manifolds is  \cite{BaBa15_unpub}, where we have canonically associated to a $(n-1)$-connected $2n$-manifold $M$, whose middle Betti number is $\geq 2$, a quadratic Lie algebra which is torsion-free. We consider a suitable basis for this Lie algebra and write down maps from loop spaces of spheres mapping into $\Omega M$ corresponding to the basis elements. These are used to prove that $\Omega M$ is a weak product (homotopy colimit of finite products) of loop spaces of spheres. Consequently, there is an expression of the homotopy groups of such $M$ as a direct sum of the homotopy groups of spheres. If the Betti number is $1$, $n$ is forced to be $2$, $4$ or $8$ by the Hopf invariant one problem \cite{Ada60}, and in this case we have observed that an analogous result is true only after inverting finitely many primes. These arguments have also been carried out for $(n-1)$-connected $d$-manifolds with $d\leq 3n-2$ after inverting finitely many primes \cite{BaBa16_unpub}.

In this paper, we compute homotopy groups of $(n-1)$-connected $(2n+1)$-manifolds in terms of homotopy groups of spheres. These results complement \cite[Theorem 1.1, Theorem 6.4]{BeWu15} in the case $n$ is odd, and are new for $n$ even. From Poincar\'{e} duality one proves that the homology of such a manifold $M$ is described by 
$$ 
H_i(M)=\begin{cases} 
                \Z &\mbox{if}~i=0,2n+1 \\
                \Z^r \oplus G &\mbox{if}~i=n  \\
               \Z^r                &\mbox{if} ~i=n+1           \\
0    &\mbox{otherwise},                   
\end{cases}
$$
where $G$ is a finite Abelian group. We make two kinds of computations for such manifolds. The first is an expression of the homotopy groups of $M$ as a direct sum of homotopy groups of spheres in the case $G=0$, or when $G$ is non-trivial, after inverting all the primes dividing the order of $G$ along the lines of \cite{BaBa15_unpub}. We prove the following result (see Theorem \ref{htpytors-free}, Theorem \ref{htpyformtors-free})
\begin{thma}\label{A}
Let $M$ be a $(n-1)$-connected $(2n+1)$-manifold with notations as above, and satisfying $r\geq 1$. Let $p$ be a prime such that $p\nmid |G|$. Then, \\
(a) The $p$-local homotopy groups of $M$ can be expressed as a direct sum of $p$-local homotopy groups of spheres. \\
(b) The $p$-local homotopy groups of $M$ are a function of $r$ and do not depend on the attaching map of the $(2n+1)$-cell. 
\end{thma}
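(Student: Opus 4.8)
The plan is to analyze the based loop space $\Omega M$ after localization at $p$, adapting the method of \cite{BaBa15_unpub}. Since $p\nmid|G|$ the group $G$ vanishes $p$-locally and the Moore-type subcomplexes of $M$ that build $G$ become $p$-locally contractible, so $M_{(p)}$ is the $p$-localization of a simply connected finite complex $N=\big(\bigvee^r S^n\vee\bigvee^r S^{n+1}\big)\cup_f e^{2n+1}$ with free homology; this is a routine piece of $p$-local homotopy theory (the $(n+1)$-skeleton of $M_{(p)}$ is a $p$-local wedge of spheres and $M_{(p)}$ is obtained from it by a single $(2n+1)$-cell). The first task is to pin down the attaching map $f\in\pi_{2n}\big(\bigvee^r S^n\vee\bigvee^r S^{n+1}\big)$. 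By the Hilton--Milnor theorem this group is, in the relevant range, a direct sum of copies of $\pi_{2n}(S^n)$, of $\pi_{2n}(S^{n+1})$, and of $r^2$ copies of $\pi_{2n}(S^{2n})\cong\Z$ spanned by the Whitehead products $[x_i,y_j]$ (with one further contribution from weight-three basic products when $n=2$, treated separately below). Poincar\'e duality makes the cup pairing $H^n\otimes H^{n+1}\to H^{2n+1}$ unimodular $p$-locally and kills all other products, so after a change of basis of the wedge summands the component of $f$ in $\bigoplus^{r^2}\pi_{2n}(S^{2n})$ is exactly $\sum_{i=1}^r[x_i,y_i]$, while the remaining components of $f$ lie in $\bigoplus\pi_{2n}(S^n)\oplus\bigoplus\pi_{2n}(S^{n+1})$.

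Next I would compute $H_*(\Omega M;\Z_{(p)})$ from the Adams--Hilton model (equivalently, the cobar construction on the chains of $M$): it is the homology of the free graded associative algebra $T(a_1,\dots,a_r,b_1,\dots,b_r,c)$ with $|a_i|=n-1$, $|b_i|=n$, $|c|=2n$ and $dc$ equal to the class of $f$. The crucial observation for part (b) is that for $n\ge 3$ the spherical corrections to $f$ are invisible here: they land, through the subloops $\Omega S^n$ and $\Omega S^{n+1}$ of $\Omega\big(\bigvee^r S^n\vee\bigvee^r S^{n+1}\big)$, in $H_{2n-1}(\Omega S^n;\Z_{(p)})$ and $H_{2n-1}(\Omega S^{n+1};\Z_{(p)})$, which both vanish because $2n-1$ is a multiple of neither $n-1$ nor $n$; hence $dc=\sum_i[a_i,b_i]$ and $H_*(\Omega M;\Z_{(p)})$ is independent of $f$. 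A direct analysis of this differential (the quadratic relation $w=\sum_i[a_i,b_i]$ is regular enough for the higher powers of $c$ to be acyclic) identifies the homology with the one-relator quadratic algebra $A=T(a_1,\dots,a_r,b_1,\dots,b_r)/(w)$; since $w$ is a Lie element, $A=U(\mathfrak g)$ for the graded Lie algebra $\mathfrak g=L(a_1,\dots,a_r,b_1,\dots,b_r)/\langle\langle w\rangle\rangle$ (with the usual care about squaring operations on odd-degree classes, i.e.\ restricted structures at small primes) --- the ``quadratic Lie algebra'' attached to $M$.

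The heart of the argument is then to show that $\mathfrak g$ is, as a $\Z_{(p)}$-module, a direct sum $\bigoplus_\alpha L(\xi_\alpha)$ of free Lie algebras on single generators $\xi_\alpha$, each $\xi_\alpha$ realizable by an iterated Whitehead product in the $x_i$ and $y_i$. I would do this by showing $A$ is Koszul and reading off the structure of $\mathfrak g$ from the Koszul dual $A^!$, or alternatively by exhibiting an explicit Hall-type basis for the quotient $L(a_i,b_i)/\langle\langle w\rangle\rangle$ consisting of those basic products that ``do not meet the relation''. Either way the multiset of degrees $\{\,m_\alpha:=|\xi_\alpha|+1\,\}$ is then forced by the Poincar\'e series of $A$ via a Witt-type formula, so it depends only on $r$ and $n$.

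Finally, each $\xi_\alpha$ yields a map $S^{m_\alpha}\to\bigvee^r S^n\vee\bigvee^r S^{n+1}\to M$, hence $\Omega S^{m_\alpha}\to\Omega M$, and the loop-multiplicative extension of their product gives $\Theta\colon\prod_\alpha\Omega S^{m_\alpha}\to\Omega M$, landing a priori in the weak product (a homotopy colimit of finite subproducts), exactly as in \cite{BaBa15_unpub}. By the homology computation above $\Theta$ is a $\Z_{(p)}$-homology isomorphism, and since both sides are homotopy colimits of finite products of loop spaces it is a $p$-local homotopy equivalence; therefore $\pi_k(M)_{(p)}\cong\bigoplus_\alpha\pi_k(S^{m_\alpha})_{(p)}$, which gives (a), and since the $m_\alpha$ depend only on $r$, (b) follows. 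I expect the main obstacle to be precisely the Lie-algebraic step of the third paragraph --- proving that $\mathfrak g$ splits as a direct sum of free Lie algebras on single generators, so that $\Omega M$ really does decompose into loop spaces of spheres with an intrinsic multiset of dimensions --- with the additional nuisances that $n=2$ must be handled by hand (extra Whitehead products appear, and one may fall back on the classification of simply connected $5$-manifolds) and that the prime $p=2$ (permitted when $G=0$) requires working consistently with restricted Lie algebras.
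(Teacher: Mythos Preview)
Your approach is essentially the same as the paper's: compute $H_*(\Omega M)$ as the one-relator quadratic algebra $T(u_i,u_i')/(l_M)$, recognize this as the universal enveloping algebra of a quadratic Lie algebra, exhibit an explicit basis of that Lie algebra (the paper uses Lyndon words, your ``Hall-type basis''), realize each basis element by an iterated Whitehead product, and show the resulting product map from a weak product of looped spheres into $\Omega M$ is a homology isomorphism via PBW. The endgame is identical.

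The one substantive difference is how you reach the loop homology. The paper never analyzes the attaching map: it shows (Propositions~\ref{quadratic}--\ref{man-Kos}) that $H^*(M;k)$ is a Koszul quadratic algebra for every field $k$ with $\operatorname{char}(k)\nmid |G|$, and then invokes Berglund--B\"orjeson (equivalently, the cobar construction on the Koszul quadratic coalgebra $H_*(M;D_M)$) to read off $H_*(\Omega M;D_M)$ directly from the cup-product structure, which by Poincar\'e duality depends only on $r$. This is uniform in $n\geq 2$ and gives part~(b) for free. Your Adams--Hilton degree argument is correct for $n\geq 3$, but it forces the separate treatment of $n=2$ that you flag; the paper needs no such case distinction.

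Your worry about restricted Lie algebras at $p=2$ is also unnecessary here. The paper works over the PID $D_M$ throughout: the Lie algebra $\LL(M)$ is shown to be a free $D_M$-module via the Lyndon basis (Theorem~\ref{Liebasis}), PBW holds over $D_M$, and the product map $\Lambda$ is then a $D_M$-homology isomorphism between simple spaces, hence a $p$-local equivalence for every $p\notin\Sigma_M$ including $p=2$. No squaring operations or restricted structures enter the argument.
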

There is an explicit expression for calculating the number of $\pi_k(S^l)_{(p)}$ in $\pi_k(M)_{(p)}$ in Theorem \ref{htpyformtors-free}. This expression is quite similar to \cite[Theorem 3.6, Theorem 3.7]{BaBa16_unpub} where the computation is carried out in the general case of $(n-1)$-connected $d$-manifolds with $d\leq 3n-2$. A closer inspection shows that the results in Theorem \ref{htpytors-free} and Theorem \ref{htpyformtors-free} are stronger for $(n-1)$-connected $(2n+1)$-manifolds. For, in \cite{BaBa16_unpub} the result about the number of primes being inverted in the expression above is not determined from its homology, while in the current paper we need to invert only those primes which appear as orders of elements in the torsion subgroup $G$. 

The second computation for $(n-1)$-connected $(2n+1)$-manifolds involves a decomposition of the loop space into simpler factors along the lines of \cite{BeTh14}, where the torsion-free assumption is not necessary. We prove (see Theorem \ref{loopdecM}) 
\begin{thmb}
Suppose $M$ is a $(n-1)$-connected $(2n+1)$-manifold with notations as above, and satisfying $r\geq 1$. Then we have a homotopy equivalence
$$\Omega M \simeq \Omega S^n \times \Omega S^{n+1} \times \Omega (Z\vee (Z\wedge \Omega (S^n \times S^{n+1})))$$
where $Z \simeq \vee_{r-1}S^n \vee_{r-1}S^{n+1} \vee M(G,n)$. 
\end{thmb}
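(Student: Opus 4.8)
The plan is to follow the loop-space splitting strategy of Beben--Theriault \cite{BeTh14}. Write $\bar M:=M\setminus\{\mathrm{pt}\}$ for the punctured manifold, so there is a homotopy cofibration $S^{2n}\xrightarrow{f}\bar M\to M$. First I would identify $\bar M$: since $M$ is closed and $(n-1)$-connected with $n\ge 2$, a middle-dimensional handle simplification shows $\bar M$ is homotopy equivalent to a CW complex with cells only in dimensions $0,n,n+1$; the attaching maps of the $(n+1)$-cells are recorded by an integer matrix (the map induced on $\pi_n\cong H_n$ of a wedge of $n$-spheres), and Smith normal form gives $\bar M\simeq\bigvee_r S^n\vee\bigvee_r S^{n+1}\vee M(G,n)$. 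The intersection pairing $H_n(M)\times H_{n+1}(M)\to\Z$ is unimodular by Poincar\'e duality, so, splitting off a hyperbolic pair, I may arrange $\bar M\simeq(S^n\vee S^{n+1})\vee Z$ with $Z\simeq\bigvee_{r-1}S^n\vee\bigvee_{r-1}S^{n+1}\vee M(G,n)$; note $Z$ is a suspension because $n\ge 2$. Collapsing $Z$ yields a homotopy cofibration $Z\to M\xrightarrow{q}Q$ with $Q:=M/Z\simeq(S^n\vee S^{n+1})\cup_{\bar f}e^{2n+1}$, $\bar f\in\pi_{2n}(S^n\vee S^{n+1})$. By Hilton--Milnor the only summands of $\pi_{2n}(S^n\vee S^{n+1})$ in play are $\pi_{2n}(S^n)$, $\pi_{2n}(S^{n+1})$ and $\Z\langle[\iota_n,\iota_{n+1}]\rangle$, and Poincar\'e duality forces the coefficient of $[\iota_n,\iota_{n+1}]$ to equal the intersection number of the chosen hyperbolic pair, namely $\pm1$. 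Hence $Q$ is an $(n-1)$-connected Poincar\'e duality complex of dimension $2n+1$ whose homology and cohomology ring coincide with those of $S^n\times S^{n+1}$.

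Two lemmas about $q\colon M\to Q$ must then be proved. \emph{Lemma A: $q$ admits a right homotopy inverse $s$.} I would define $s$ on $\bar Q=S^n\vee S^{n+1}$ by the inclusion $S^n\vee S^{n+1}\hookrightarrow\bar M\hookrightarrow M$ and try to extend over the top cell of $Q$; the obstruction lies in $\pi_{2n}(M)$ and is the image under $\bar M\hookrightarrow M$ of the $Z$-supported part of $f$. This should be annihilated using the freedom to modify $s|_{\bar Q}$ by classes pulled back from $H_n(M)$ and $H_{n+1}(M)$ (available by the Hurewicz theorem), together with the vanishing in $\pi_{2n}(M)$ of the mixed Whitehead products $[\alpha_i,\beta_j]$ for $i\ne j$ (their intersection coefficient is $0$, so they are trivial on the relevant two-cell subcomplex, and a Poincar\'e-duality argument forces them to die in $M$). \emph{Lemma B: $\Omega Q\simeq\Omega S^n\times\Omega S^{n+1}=\Omega(S^n\times S^{n+1})$.} Here the arithmetic $2n+1=n+(n+1)$ with $n<n+1$ is decisive: one reduces to the rank-one situation ($r=1$, $G=0$), where $M=Q$ is modeled on a twisted $S^{n+1}$-bundle over $S^n$, and the bundle projection admits a section because the obstruction lies in $\pi_{n-1}(S^{n+1})=0$; looping this section splits $\Omega Q$ as asserted. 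The general case is obtained by recognizing $Q=M/Z$ as (the homotopy type of) such a rank-one piece.

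Given Lemmas A and B the assembly is formal. Because $q$ has a right homotopy inverse and the composite $Z\to M\xrightarrow{q}Q$ is null-homotopic, $Z\to M$ lifts to the homotopy fibre $F$ of $q$; combining this lift with the $\Omega Q$-holonomy action on $F$ — which is trivial on the basepoint exactly because of the section — produces a map $Z\rtimes\Omega Q\to F$, and a homology comparison (in which the section forces the vanishing of the Serre spectral sequence differentials emanating from the base row) shows it is a homotopy equivalence. Since $Z$ is a co-$H$-space, $Z\rtimes\Omega Q\simeq Z\vee(Z\wedge\Omega Q)$. The section also gives $\Omega M\simeq\Omega Q\times\Omega F$. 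Feeding in Lemma B,
$$\Omega M\simeq\Omega Q\times\Omega\bigl(Z\vee(Z\wedge\Omega Q)\bigr)\simeq\Omega S^n\times\Omega S^{n+1}\times\Omega\bigl(Z\vee(Z\wedge\Omega(S^n\times S^{n+1}))\bigr),$$
which is the assertion.

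The main obstacle I anticipate is Lemmas A and B — the places where the manifold hypothesis (Poincar\'e duality) genuinely enters. One must extract enough information about the attaching map $f$ — via Hilton--Milnor bookkeeping, the intersection form, and the unstable twisting classes in $\pi_{2n}(S^n)\oplus\pi_{2n}(S^{n+1})$ — both to construct the section of $q$ and to recognize the loop space of the rank-one quotient $Q$ as that of $S^n\times S^{n+1}$. Lemma B in particular is subtle precisely because $Q$ is a Poincar\'e complex rather than an a priori sphere bundle; pinning down its homotopy type well enough to apply the $\pi_{n-1}(S^{n+1})=0$ observation is the technical heart of the argument, after which the fibration-theoretic endgame is routine.
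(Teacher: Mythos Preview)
Your overall architecture matches the paper's: form the cofibre $Q$ of $Z\to M$, split $\Omega M\simeq\Omega Q\times\Omega F$, identify $\Omega Q\simeq\Omega S^n\times\Omega S^{n+1}$, and recognise $F\simeq Z\rtimes\Omega Q$. But your execution of the two key lemmas diverges from the paper in ways that introduce genuine gaps.

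\textbf{Lemma A is stronger than needed and not established.} You attempt a space-level section $s\colon Q\to M$. The obstruction you name is $(\nu_r\vee\nu_r')_*(\bar f)\in\pi_{2n}(M)$, which has the form $\pm[\nu_r,\nu_r']+\nu_r\circ a+\nu_r'\circ b$ with $a\in\pi_{2n}(S^n)$, $b\in\pi_{2n}(S^{n+1})$; your sketch for annihilating the unstable pieces $a,b$ by ``modifying $s|_{\bar Q}$'' is not an argument, and there is no reason these die in $\pi_{2n}(M)$. The paper avoids this entirely: it only produces a right homotopy inverse \emph{after looping}. Having first shown $\Omega Q\simeq\Omega(S^n\times S^{n+1})$, it observes that the composite $\Omega(\vee_r S^n\vee_r S^{n+1})\to\Omega M\to\Omega Q$ is $\Omega p$ for the collapse $p\colon \vee_r S^n\vee_r S^{n+1}\to S^n\vee S^{n+1}\hookrightarrow S^n\times S^{n+1}$, and $\Omega p$ has a right inverse by Hilton--Milnor. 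That loop-level splitting already gives both $\Omega M\simeq\Omega Q\times\Omega F$ and the null-homotopy of $\Omega Q\to F$.

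\textbf{Lemma B is much easier than you make it.} No sphere-bundle recognition is needed and $Q$ need not be identified beyond its cohomology ring. Since $H^*(Q)\cong H^*(S^n\times S^{n+1})$, the loop-space homology computation (as in Berglund--B\"orjeson) gives $H_*(\Omega Q;\Z)\cong\Z[u,v]$ with $|u|=n-1$, $|v|=n$; the maps $\Omega\lambda$, $\Omega\lambda'$ hit $u$, $v$, so the multiplication $\Omega S^n\times\Omega S^{n+1}\to\Omega Q$ is a homology isomorphism and hence a weak equivalence.

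\textbf{Your spectral-sequence assertion is incorrect.} In the Serre spectral sequence for $\Omega Q\to F\to M$ the differentials from the bottom row are \emph{not} zero: one has $d^n(v_r)=u$, $d^{n+1}(v_r')=v$, and $d^n([M])=u\otimes v_r'$, by comparison with the path--loop fibration over $Q$. The paper computes these explicitly and obtains $\tilde H_*(F)\cong H_*(\Omega Q)\otimes\tilde H_*(Z)$ as a left $H_*(\Omega Q)$-module; only then does the map $Z\rtimes\Omega Q\to F$ become a homology isomorphism. A section of $q$ (even if one existed) would control the spectral sequence of $F\to M\to Q$, not this one.
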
   
In the expression above, $M(G,n)$ refers to the Moore space for $G$ of degree $n$ described by 
$$\tilde{H}_\ast M = \begin{cases} G &\mbox{if} ~ \ast =n \\ 
                                         0 &\mbox{if} ~ \ast \neq n. \end{cases} $$ 
It is instructive to compare the expression above with \cite[Theorem 6.4]{BeWu15} where it is assumed that the spaces are localized at an odd prime $p$. In the case $G=0$, the expressions match exactly while for $G\neq 0$, the expressions differ slightly. Further \cite[Theorem 1.1]{BeWu15} also includes some computations in the case $r=0$ where it is assumed that $G$ does not have any $2$ torsion. A significant point to note here is that the calculations in Theorem A and Theorem B rely only on the expressions of the cohomology algebras, and thus they carry forward for simply connected finite Poincar\'{e} duality complexes of the type above (that is for $(n-1)$-connected Poincar\'{e} duality complexes of dimension $2n+1$). 

As an application we try to compute the homotopy exponents of such manifolds. Moore's conjecture \cite{NeSe81} states that a finite complex has homotopy exponents at every prime if and only if it is rationally ellliptic. It is easily observed that the $(n-1)$-connected $(2n+1)$-manifolds are rationally hyperbolic if and only if $r\geq 2$. So, when $r\geq 2$, the Moore conjecture predicts that there will be no homotopy exponent at some prime. From Theorem A above we deduce that if $p$ does not divide the order of $G$, the manifold does not have a homotopy exponent at $p$ when $r\geq 2$. Further from Theorem B, we can deduce that the homotopy groups of $S^n \vee S^{n+1}$ are a summand of the homotopy groups of $M$ (see Corollary \ref{rethyp}). It follows that for any prime $p$, the $(n-1)$-connected $(2n+1)$-manifolds with $r\geq 2$ do not have any homotopy exponent at $p$.  

\begin{notation}
All manifolds considered in this paper are compact, closed and oriented unless otherwise mentioned.  
\end{notation}

\begin{mysubsection} {Organisation of the paper} 
In Section \ref{quadassLie}, we introduce some preliminaries on quadratic associative algebras, Lie algebras, Koszul duality, Diamond lemma and Poincar\'{e}-Birkhoff-Witt Theorems. In Section \ref{quadcoh}, we derive some quadratic properties of cohomology algebras arising from highly connected manifolds. In  Section \ref{conn}, we compute the loop space homology of a $(n-1)$-connected $(2n+1)$-manifold and use this to compute the homotopy groups as a direct sum of homotopy groups of spheres. In Section \ref{loopdectors}, we prove a decomposition of the loop space of a $(n-1)$-connected $(2n+1)$-manifold in the case that the rank of the middle homology is at least $1$.  
\end{mysubsection} 

\begin{mysubsection}{Acknowledgements}
The author would like to thank the referee for pointing out the reference \cite{BeWu15}, and also for pointing out the history of the problem of loop space decompositions of highly connected manifolds. 
\end{mysubsection}

\section{Quadratic Associative Algebras and Lie Algebras}\label{quadassLie}

In this section we introduce some algebraic preliminaries related to Koszul duality of {\it associative algebras} and associated {\it Lie algebras}. We also recall some results and relations between Gr\"obner bases of quadratic algebras, quadratic Lie algebras and, Poincar\'e-Birkhoff-Witt Theorem. These are accompanied by some crucial algebraic results used throughout the manuscript. 

\begin{mysubsection} { Koszul duality of associative algebras}
We begin with some background on Koszul duality of quadratic algebras over a field following \cite{PolPos05} and \cite{LoVa12}. Throughout this subsection $k$ denotes a field, $V$ a $k$-vector space and $\otimes=\otimes_k$ unless otherwise mentioned. 

\begin{defn}
Let $T_k(V)$ denote the tensor algebra on the space $V$. For $R\subset V\otimes_k V$, the associative algebra $A_k(V,R)=T_k(V)/(R)$ is called a {\it quadratic $k$-algebra}. 
\end{defn} 


A quadratic algebra is graded by weight. The tensor algebra $T(V)$ is {\it weight-graded} by declaring an element of $V^{\otimes n}$ to have grading $n$. Since $R$ is homogeneous, the weight grading passes onto $A(V,R):=A_k(V,R)=T_k(V)/(R)$. 

The dual notion leads to quadratic coalgebras. For this note that $T(V)$ has a coalgebra structure by declaring the elements of $V$ to be primitive. When $T(V)$ is thought of as a coalgebra we write it as $T^c(V)$. 
\begin{defn}
For $R\subset V\otimes V$ the {\it quadratic coalgebra} $C(V,R)$ is defined as the maximal sub-coalgebra $C$ of $T^c(V)$ such that $C\to T^c(V) \to V\otimes V/R$ is $0$. The maximal property of $C(V,R)$ implies that if $C$ is weight-graded sub-coalgebra such that the weight $2$ elements are contained in $R$ then $C\subset C(V,R)$. 
\end{defn}

Recall that a $k$-algebra $A$ is {\it augmented} if there is a $k$-algebra map $A\to k$. Analogously a $k$-coalgebra $C$ is {\it coaugmented} if there is a $k$-coalgebra map $k\to C$. For a coaugmented coalgebra $C$ one may write $C \cong k \oplus \bar{C}$ and the projection of $\Delta$ onto $\bar{C}$ as 
$\bar{\Delta} : \bar{C} \to \bar{C} \otimes \bar{C}.$
A coaugmented coalgebra is said to be {\it conilpotent} if for every $c\in \bar{C}$ there exists $r>0$ such that $\bar{\Delta}^r(c)=0$.  

One has adjoint functors between augmented algebras and coaugmented colgebras given by the bar construction and the cobar construction (see \cite{LoVa12}, \S 2.2.8).


\begin{defn}
Let $\bar{A}\subset A$ be the kernel of the augmentation, define $BA= (T(s\bar{A}),d)$, where $s$ denotes suspension and $d$ is generated as a coderivation by 
$$d(s(a))=s(a\otimes a)-s(a\otimes 1) -s(1\otimes a).$$
Dually, let $C=\bar{C}\oplus k$, and define $\Omega C = (T(s^{-1}\bar{C}),d)$ where $d$ is generated as a derivation by the equation 
$$d(s^{-1} c) = s^{-1}(\bar{\Delta} (c))=s^{-1}(\Delta(c)-c\otimes 1 - 1\otimes c).$$
\end{defn}

Note that the above definition makes sense for quadratic algebras (and coalgebras) as these are naturally augmented (respectively coaugmented). There is a differential on $C\otimes \Omega C$ generated by $d(c)=1\otimes s^{-1}c$ and dually a differential on $A\otimes BA$.  
\begin{defn}
The Koszul dual coalgebra of a quadratic algebra $A(V,R)$ is defined as $A^\text{!`}=C(s(V),s^2(R))$. The Koszul dual algebra $A^!$ of a quadratic algebra $A(V,R)$ is defined as $A^!=A(V^*,R^\perp)$ where $R^\perp\subset V^*\otimes V^*$ consists of elements which take the value $0$ on $R\subset V\otimes V$. 
\end{defn}

 The Koszul dual algebra and the Koszul dual coalgebra are linear dual up to a suspension. Let $A^{(n)}$ stand for the subspace of homogeneous $n$-fold products. Then $(A^!)^{(n)}\cong s^n((A^\text{!`})^*)^{(n)}$. 

 For a quadratic algebra $A(V,R)$, there is a natural map from $\Omega A^\text{!`} \rightarrow A$ which maps $v$ to itself. Using this map there is a differential on $A^\text{!`}\otimes_\kappa A$ denoted by $d_\kappa$.      

\begin{defn}{ \cite[Theorem 3.4.6]{LoVa12}}
\,A quadratic algebra $A(V,R)$ is called {\it Koszul} if one of the following equivalent conditions hold:\\
(i) $\Omega A^\text{!`}\rightarrow A$ is a quasi-isomorphism; \\
(ii) the chain complex $A^\text{!`}\otimes_\kappa A$ is acyclic; \\
(iii)\footnote{See  \cite{PolPos05}, Chapter 2, Definition 1.} $Ext_A(k,k)\cong A^!$. 
\end{defn}

Koszulness is an important property of quadratic algebras. One may make an analogous definition for quadratic coalgebras \cite[Theorem 3.4.6]{LoVa12}. From \cite[Proposition 3.4.8]{LoVa12} one has that $A(V,R)$ is Koszul if and only if $A(V^\ast, R^\perp)$ is Koszul. One may use the Koszul property to compute the homology of the cobar construction in various examples. We recall a condition for Koszulness. Fix a basis $(v_1,v_2,\ldots, v_n)$ of $V$, and fix an order $v_1<v_2<\ldots<v_n$. This induces a lexicographic order on the degree $2$ monomials. Now arrange the expressions in $R=\textup{span}_k\{r_1,r_2,\ldots\}$ in terms of order of monomials. An element $v_iv_j$ is called a leading monomial if there exists $r_l=v_iv_j + \mbox{lower order terms}$. Note that (cf. \cite{LoVa12}, Theorem 4.1.1) implies that if there is only one leading monomial $v_iv_j$ with $i\neq j$ then the algebra is Koszul. This leads to the following result.
\begin{prop}\label{Kos2}
Let $V$ be a $k$-vector space and $R=k\mathpzc{r}\subset V\otimes V$ be a $1$-dimensional subspace such that with respect to some basis $\{v_1,\ldots,v_n\}$ of $V$, 
$$\mathpzc{r}=v_iv_j +\sum_{k<i~or~k=i,l<j} a_{k,l} v_kv_l$$
for $i\neq j$ in the sense above. Then the algebra $A=A(V,R)=T(V)/R$ is Koszul.
\end{prop}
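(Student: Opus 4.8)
The plan is to exhibit the single defining relation $\mathpzc{r}$ as a quadratic Gr\"obner basis of the two-sided ideal it generates, and then invoke the standard fact that a quadratic algebra possessing a quadratic Gr\"obner basis is Koszul (equivalently, admits a Poincar\'e--Birkhoff--Witt basis), which is \cite[Theorem 4.1.1]{LoVa12}; this is exactly the implication alluded to in the discussion preceding the statement.

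First I would set up the associated rewriting system. Order the free monoid on $v_1<v_2<\cdots<v_n$ by the degree-lexicographic order, which induces the lexicographic order on degree-$2$ monomials; by hypothesis the leading monomial of $\mathpzc{r}$ is $v_iv_j$, and every other monomial occurring in $\mathpzc{r}$ is strictly smaller. Since $R=k\mathpzc{r}$ is one-dimensional, we obtain a single reduction rule, $v_iv_j \mapsto -\sum_{k<i\ \text{or}\ (k=i,\,l<j)} a_{k,l}\,v_kv_l$, applied to any occurrence of $v_iv_j$ as a consecutive subword of a word of arbitrary length. Each application preserves the length of a word and strictly decreases it in the degree-lexicographic order, so the rewriting system is terminating; in particular every element of $A$ is a $k$-linear combination of $\mathpzc{r}$-reduced monomials (words containing no consecutive subword equal to $v_iv_j$), which therefore span $A$.

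The crux is confluence, and by Newman's lemma it suffices to check local confluence, i.e.\ that all ambiguities between applications of the rule resolve. Since there is a single leading monomial, of length $2$, there are no inclusion ambiguities; an overlap ambiguity would arise from a word $v_av_bv_c$ of length $3$ in which $v_iv_j$ occurs both as the first two letters and as the last two letters, which forces $(v_a,v_b)=(v_i,v_j)=(v_b,v_c)$ and hence $v_i=v_j$. Since $i\neq j$ there are no ambiguities whatsoever, so the system is vacuously locally confluent, hence confluent, and Bergman's Diamond Lemma yields that the $\mathpzc{r}$-reduced monomials form a $k$-basis of $A$. Equivalently, $\{\mathpzc{r}\}$ is a Gr\"obner basis of $(R)$, and it is quadratic by construction; applying \cite[Theorem 4.1.1]{LoVa12} (cf.\ also \cite{PolPos05}) we conclude that $A=A(V,R)$ is Koszul.

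The one point requiring genuine care is the confluence verification, and within it the observation that a single quadratic leading monomial $v_iv_j$ admits a self-overlap only when $i=j$ --- this is precisely where the hypothesis $i\neq j$ enters, and where the statement would fail without it. Everything else --- termination from the monomial order, the spanning statement, and the concluding citation --- is routine.
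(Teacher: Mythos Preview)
Your proof is correct and follows essentially the same approach as the paper: the paper does not give an explicit proof but simply notes in the paragraph preceding the proposition that \cite[Theorem 4.1.1]{LoVa12} applies whenever there is only one leading monomial $v_iv_j$ with $i\neq j$. Your argument spells out precisely why that theorem applies --- namely, that the sole possible overlap ambiguity would require $i=j$ --- which is exactly the content the paper leaves implicit.
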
 
\end{mysubsection}

\begin{mysubsection} {Lie algebras and quadratic algebras}
Let $\mathcal{R}$ be a Principal Ideal Domain (PID). We recall some facts about quadratic algebras and Lie algebras over $\mathcal{R}$. In this paper the domains used will be $\Z$ or some localization of $\Z$. Suppose $V$ is a free $\mathcal{R}$-module that is finitely generated and $R\subset V\otimes_\RR V$ be a submodule. As before, denote by $A(V,R)$ the quadratic $\RR$-algebra $T_\RR(V)/(R)$ where $T_\RR(V)$ is the $\RR$-algebra tensor algebra on $V$ and $(R)$ is the two-sided ideal on $R$.  

Recall the Diamond Lemma from \cite{Berg78}. Suppose that $V$ has a basis $x_1,\ldots,x_m$. Suppose that the submodule $R\subset V\otimes V$ is generated by $n$ relations of the form $W_i - f_i$ where $W_i=x_{\alpha(i)} \otimes x_{\beta(i)}$ and $f_i$ a linear combination of terms $x_j\otimes x_l$ other than $x_{\alpha(i)}\otimes x_{\beta(i)}$. We call a monomial $x_{i_1}\otimes \ldots \otimes x_{i_l}$ {\it $R$-irreducible} if it cannot be expressed as $A\otimes x_{\alpha(i)}\otimes x_{\beta(i)} \otimes B$ for monomials $A,B$ for any $i$. Theorem 1.2 of \cite{Berg78} states that under certain conditions on $R$, the images of the $R$-irreducible monomials in $A(V,R)$ forms a basis. One readily observes that the conditions are satisfied if $n=1$ and $\alpha(1) \neq \beta(1)$.  Therefore we conclude the following theorem.
\begin{theorem}\label{Diamond}
Suppose that $R=k\mathpzc{r}$ and $\mathpzc{r}$ is of the form 
$$x_\alpha\otimes x_\beta - \sum_{(i,j)\neq (\alpha, \beta)} a_{i,j} x_i \otimes x_j$$ 
with $\alpha\neq \beta$. Then the $R$-irreducible elements form a basis for $A(V,R)$. 
\end{theorem}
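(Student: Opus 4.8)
The plan is to read this off from the Diamond Lemma set-up recalled above, as the case $n=1$: apply \cite[Theorem 1.2]{Berg78} to the reduction system on $T_\RR(V)$ consisting of the single rule
$$\rho\colon\qquad x_\alpha\otimes x_\beta\ \rightsquigarrow\ \sum_{(i,j)\neq(\alpha,\beta)}a_{i,j}\,x_i\otimes x_j .$$
The two-sided ideal generated by $x_\alpha\otimes x_\beta-\sum a_{i,j}\,x_i\otimes x_j$ is precisely $(\mathpzc{r})=(R)$, and a monomial is irreducible for this reduction system exactly when it is $R$-irreducible in the sense of the statement; so once Bergman's hypotheses are verified, the conclusion of his theorem \emph{is} the assertion of Theorem~\ref{Diamond}. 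Two points have to be checked: that all ambiguities of the reduction system are resolvable, and that there is a semigroup partial order on the monoid of monomials which is compatible with $\rho$ and has the descending chain condition.

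I would handle the ambiguities first, since this is the step that really uses $\alpha\neq\beta$. With only one rule there are no inclusion ambiguities, and an overlap ambiguity would be a pair of factorisations $x_\alpha\otimes x_\beta=AB$ and $x_\alpha\otimes x_\beta=BC$ with $B$ a nonempty word strictly shorter than $x_\alpha\otimes x_\beta$, hence $B$ a single generator; but then $x_\beta=B=x_\alpha$, contradicting $\alpha\neq\beta$. So there are no ambiguities at all and Bergman's condition ``all ambiguities resolvable'' holds vacuously. The hypothesis is genuinely needed here: if $\alpha=\beta$ then $x_\alpha\otimes x_\alpha$ overlaps itself and $x_\alpha\otimes x_\alpha\otimes x_\alpha$ is a real ambiguity.

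For the ordering the natural choice is the semigroup partial order $\prec$ generated by declaring $x_i\otimes x_j\prec x_\alpha\otimes x_\beta$ for each $(i,j)\neq(\alpha,\beta)$ occurring in $\rho$, closed under left and right multiplication and under transitivity; compatibility with $\rho$ is then built in by construction. The substance is to check that this really is a partial order satisfying the descending chain condition — equivalently, that the rewriting process defined by $\rho$ terminates — and I expect this to be the main obstacle. In the situations where Theorem~\ref{Diamond} gets applied the lower-order terms $x_i\otimes x_j$ all lie below $x_\alpha\otimes x_\beta$ in a fixed degree-lexicographic order on monomials (the same ``leading monomial'' phenomenon already present in Proposition~\ref{Kos2}), so that $\prec$ is dominated by a genuine well-order and termination is immediate; with that in place \cite[Theorem 1.2]{Berg78} applies over the PID $\RR$, as recalled above, and yields that the images of the $R$-irreducible monomials form an $\RR$-basis of $A(V,R)$.
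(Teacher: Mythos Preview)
Your approach is the paper's approach: invoke Bergman's Diamond Lemma for the single-rule reduction system, so that the conclusion is exactly the statement of the theorem. Your verification that there are no overlap (or inclusion) ambiguities when $\alpha\neq\beta$ is correct and is precisely the content behind the paper's laconic ``one readily observes that the conditions are satisfied''.

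The step you flag as the main obstacle --- producing a compatible semigroup partial order with DCC --- is a genuine gap, and in the generality stated it cannot be closed: the theorem as written is false. Take $V$ free on $x_1,x_2$ and $\mathpzc{r}=x_1x_2-x_1x_1-x_2x_2$ (so $\alpha=1\neq 2=\beta$). Then
\[
x_1\mathpzc{r}+\mathpzc{r}\,x_2=-x_1^{\,3}-x_2^{\,3},
\]
so the $R$-irreducible monomials $x_1^{\,3}$ and $x_2^{\,3}$ are linearly dependent in $A(V,R)$. Correspondingly, your ``generated'' partial order acquires a cycle: $x_1x_1<x_1x_2$ gives $x_1x_1x_2<x_1x_2x_2$, while $x_2x_2<x_1x_2$ gives $x_1x_2x_2<x_1x_1x_2$. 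Your instinct to retreat to the leading-monomial hypothesis of Proposition~\ref{Kos2} is therefore exactly right: that hypothesis supplies a genuine degree-lex well-order dominating all the replacements, Bergman then applies, and this is all the paper ever uses (Proposition~\ref{freemod} onward carries that hypothesis explicitly). In short, Theorem~\ref{Diamond} is over-stated in the paper; what survives, and what both you and the paper actually need, is the version with the ordering assumption already built in.
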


Note that if $R$ is as in Proposition \ref{Kos2}, the hypothesis of Theorem \ref{Diamond} is automatically satisfied. Thus we have 

\begin{prop}\label{freemod}
Consider a quadratic algebra $A(V,R)$ over a Principal Ideal Domain $\RR$ such that $R\subset V\otimes_\RR V$ is such that with respect to some basis $\{v_1,\ldots,v_n\}$ of $V$, 
$$\mathpzc{r}=v_iv_j +\sum_{k<i~or~k=i,l<j} a_{k,l} v_kv_l$$
for $i\neq j$ for some ordering on $\{1,\cdots, n\}$.  Then, 
\begin{enumerate}
\item As a $\RR$-module the quadratic algebra $A(V,R)$ is free. 
\item The $R$-irreducible elements form a basis. 
\item For an irreducible element $\pi\in \RR$, (so that $k=\RR/\pi$ is a field), $A_\RR(V,R)\otimes_\RR k  \cong A_k(V\otimes_\RR k,R\otimes_\RR k).$
\end{enumerate}
\end{prop}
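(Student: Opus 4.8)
The plan is to deduce all three statements from Theorem \ref{Diamond} together with a base-change argument. First I would observe that the hypothesis on $\mathpzc{r}$ is exactly of the form required by Theorem \ref{Diamond}: writing $\mathpzc{r}=v_iv_j-\sum_{(k,l)\neq(i,j)}(-a_{k,l})v_kv_l$ with $i\neq j$, we are in the situation of a single relation $W-f$ with $W=v_i\otimes v_j$ and $f$ a linear combination of other degree-$2$ monomials. Hence Theorem \ref{Diamond} applies verbatim over the PID $\RR$ and yields that the images of the $R$-irreducible monomials form an $\RR$-module basis of $A(V,R)$. This immediately gives (2), and (1) follows since a module with a basis is by definition free.

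For (3), I would argue that base change commutes with the whole construction. The point is that $A(V,R)\otimes_\RR k$ is computed as $(T_\RR(V)/(R))\otimes_\RR k$, and since $\otimes_\RR k$ is right exact and the tensor algebra functor commutes with base change ($T_\RR(V)\otimes_\RR k\cong T_k(V\otimes_\RR k)$), one gets a natural surjection $A_k(V\otimes_\RR k,\,R\otimes_\RR k)\twoheadrightarrow A_\RR(V,R)\otimes_\RR k$; here one must check that the image of $R\otimes_\RR k$ inside $(V\otimes_\RR k)^{\otimes 2}$ is still spanned by the reduction $\bar{\mathpzc{r}}$, which is clear since reducing the displayed formula for $\mathpzc{r}$ keeps the leading monomial $v_iv_j$ with $i\neq j$. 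Then I would compare dimensions/ranks: the left side has $k$-basis the $R\otimes_\RR k$-irreducible monomials (Theorem \ref{Diamond} again, now over the field $k$), the right side has $k$-spanning set the reductions of the $R$-irreducible monomials, and these two index sets coincide because $R$-irreducibility is a purely combinatorial condition on monomials — it only depends on which single monomial $v_i\otimes v_j$ is forbidden as a sub-word, which is unchanged under reduction mod $\pi$. A surjection of free $k$-modules that carries a basis onto a spanning set of the same cardinality is an isomorphism, giving (3).

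The main obstacle, such as it is, lies in the bookkeeping for (3): one has to be careful that the relation submodule $R\otimes_\RR k$ really does reduce to the span of $\bar{\mathpzc{r}}$ and that $\bar{\mathpzc{r}}$ is nonzero and still has leading monomial $v_i\otimes v_j$ with $i\neq j$ — this is where the hypothesis that the relation is monic in $v_iv_j$ (coefficient $1$, not merely a unit, though a unit would also suffice) is used, since it guarantees the leading term survives reduction. Everything else is a formal consequence of the Diamond Lemma as recorded in Theorem \ref{Diamond} and the observation, already noted in the text, that the hypothesis of Proposition \ref{Kos2} forces the hypothesis of Theorem \ref{Diamond}.
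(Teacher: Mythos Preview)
Your proposal is correct and follows essentially the same approach as the paper: parts (1) and (2) are immediate from Theorem \ref{Diamond}, and for (3) one constructs the natural base-change map and checks it matches the $R$-irreducible monomial bases on both sides. The only cosmetic difference is that the paper writes the natural map in the direction $A_\RR(V,R)\otimes_\RR k \to A_k(V\otimes_\RR k, R\otimes_\RR k)$ rather than as the surjection in the reverse direction, but these are evidently inverse to one another and the basis-matching argument is identical.
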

\begin{proof}
The first two statements directly follows from the variant of Diamond Lemma presented in Theorem \ref{Diamond}. For (3), note that the construction of the quadratic algebras induces a map 
$$
A_\RR(V,R)\otimes_\RR k  \to A_k(V\otimes_\RR k,R\otimes_\RR k).
$$
Since the $R$-irreducible elements on both sides form a basis, it follows that the map is an isomorphism as the basis of the left hand side  is mapped bijectively onto the basis of the right hand side.
\end{proof} 

\mbox{ }

Note that the free Lie algebra on $V$ is contained in its universal enveloping algebra $T(V)$. This enables one to make an analogous construction of a Lie algebra in the case $R\subset V\otimes V$ lies in the free Lie algebra generated by $V$. Construct the quadratic Lie algebra $L(V,R)$ as $L(V,R)=\frac{\Lie(V)}{(R)}$, where $\Lie(V)$ is the free Lie algebra on $V$ and $(R)$ is the Lie algebra ideal generated by $R$. We readily prove the following useful result. 


\begin{prop}\label{univlie}
The universal enveloping algebra of $L(V,R)$ is $A(V,R)$. 
\end{prop}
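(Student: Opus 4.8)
The plan is to use the universal property of the universal enveloping algebra together with the universal property of the quotient Lie algebra $L(V,R)$, and compare them. Recall that $U(\Lie(V)) = T(V)$, the tensor algebra, since the universal enveloping functor is left adjoint to the forgetful functor from associative algebras to Lie algebras, and $T(V)$ is the free associative algebra. So I would first record this fact and then analyze what happens after passing to the quotient by $R$.

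First I would set up both universal properties precisely. On the one hand, $A(V,R) = T(V)/(R)_{\text{assoc}}$, where $(R)_{\text{assoc}}$ is the two-sided associative ideal generated by $R$; maps of associative algebras out of $A(V,R)$ correspond to maps out of $T(V)$ killing $R$, i.e.\ to linear maps $V \to B$ whose induced algebra map sends every element of $R$ to $0$. On the other hand, $L(V,R) = \Lie(V)/(R)_{\text{Lie}}$, and Lie algebra maps out of $L(V,R)$ correspond to linear maps $V \to \mathfrak{g}$ whose induced Lie map kills $R$. Then maps of associative algebras $U(L(V,R)) \to B$ correspond to Lie maps $L(V,R) \to B_{\text{Lie}}$ (adjunction), which correspond to linear maps $V \to B$ such that the induced Lie algebra map $\Lie(V) \to B_{\text{Lie}}$ kills $R$.

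The key step is then to observe that, since $R \subset \Lie(V) \subset T(V)$, for a linear map $f\colon V \to B$ the induced associative algebra map $\tilde f\colon T(V) \to B$ kills the two-sided ideal $(R)_{\text{assoc}}$ if and only if the restriction of $\tilde f$ to $\Lie(V)$ (which is the Lie-algebra extension of $f$) kills $R$. The forward direction is trivial since $R \subset (R)_{\text{assoc}}$. For the reverse direction, one uses that in any associative algebra the two-sided ideal generated by a subset $S$ of the Lie subalgebra, when $S$ is annihilated by a Lie map, is annihilated by the corresponding associative map: concretely, if $\tilde f|_{\Lie(V)}$ kills $R$, then $\tilde f$ kills the Lie ideal generated by $R$ inside $\Lie(V)$, and then kills the associative ideal it generates, because $\tilde f(b \cdot r \cdot b') = \tilde f(b)\tilde f(r)\tilde f(b') = 0$. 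Hence the two functors $B \mapsto \{\text{alg maps } A(V,R)\to B\}$ and $B \mapsto \{\text{alg maps } U(L(V,R)) \to B\}$ are naturally isomorphic, so by Yoneda $A(V,R) \cong U(L(V,R))$.

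I expect the only subtle point to be the reverse implication just described, i.e.\ making sure that annihilating $R$ as a Lie map really forces annihilation of the full associative two-sided ideal $(R)_{\text{assoc}}$, and not merely the Lie ideal $(R)_{\text{Lie}}$; this is where one must use that $\tilde f$ is already an algebra homomorphism, so $\tilde f$ of any associative product involving a factor in $R$ vanishes. Everything else is formal manipulation of adjunctions, so the proof should be short. An alternative, more hands-on route would be to exhibit the surjection $U(L(V,R)) \twoheadrightarrow A(V,R)$ directly (both are generated by $V$ with the defining relations $R$) and then argue injectivity, but the adjunction argument above is cleaner and avoids any choice of basis or appeal to PBW.
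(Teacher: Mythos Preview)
Your proposal is correct and follows essentially the same approach as the paper: both arguments verify that $A(V,R)$ satisfies the universal property of the enveloping algebra by unwinding the adjunction $U \dashv (-)_{\mathrm{Lie}}$ together with $U(\Lie(V))=T(V)$, and observing that for a linear map $V\to B$ the induced algebra map $T(V)\to B$ kills $R$ (hence the two-sided ideal it generates) precisely when the induced Lie map $\Lie(V)\to B_{\mathrm{Lie}}$ kills $R$. The paper states this more tersely, while you phrase it via Yoneda; the ``subtle point'' you flag is in fact immediate once one notes that the Lie extension of $f$ is literally the restriction of the algebra extension $\tilde f$, so no detour through the Lie ideal is needed.
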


\begin{proof}
This follows from the universal property of universal enveloping algebras. One knows that the universal enveloping algebra of the free Lie algebra $L(V)=L(V,0)$ is the tensor algebra $T_\RR(V)=A(V,0)$. Consider the composite 
$$L(V)\to T_\RR(V)\to T_\RR(V)/(R).$$
The composite is a map of Lie algebras and it maps $R$ to $0$. Hence we obtain a map $L(V,R)= \Lie(V)/(R) \to A(V,R)$.

Now suppose we have a map $L(V,R)\to A$ where $A$ is an associative algebra. This means we have an arbitrary map from $V\to A$ so that $R$ is mapped to $0$. Hence it induces a unique map from $A(V,R)\to A$. Thus $A(V,R)$ satisfies the universal property of the universal enveloping algebra.   
\end{proof} 

Recall the Poincar\'e-Birkhoff-Witt Theorem for universal enveloping algebras of Lie algebras over a commutative ring $\RR$. For a Lie algebra $\LL$ over $\RR$ which is free as a $\RR$-module, the universal enveloping algebra of $\LL$ has a basis given by monomials on the basis elements of $\LL$. We may use this to deduce the following result.

\begin{prop}
\label{PBW-quad}
Suppose $V$ and $R$ are as in Proposition \ref{freemod}. Then the quadratic algebra $A(V,R)$ is isomorphic to the symmetric algebra on the $\RR$-module $L(V,R)$.  In terms of the multiplicative structure, the symmetric algebra on $L(V,R)$ is isomorphic to the associated graded of $A(V,R)$ with respect to the length filtration induced on the universal enveloping algebra.  
\end{prop}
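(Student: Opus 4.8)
The plan is to deduce both statements from the Poincar\'e--Birkhoff--Witt (PBW) theorem over the PID $\RR$ recalled above; the only substantive point is to verify that $L(V,R)$ is a free $\RR$-module, after which the classical conclusions apply directly. By Proposition \ref{univlie} we have $A(V,R)=U(L(V,R))$, and by Proposition \ref{freemod} the algebra $A(V,R)$ is $\RR$-free with the $R$-irreducible monomials as an $\RR$-basis. Since the set of $R$-irreducible monomials is determined solely by the leading monomial $v_iv_j$ of $\mathpzc{r}$, the graded rank of $A(V,R)$ is unchanged under tensoring with any residue field $\RR/\pi$, and by Proposition \ref{freemod}(3) the construction $A(-,-)$ commutes with such base change.

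To show $L(V,R)$ is $\RR$-free it is enough, over a PID, to show that each of its weight-graded pieces $L(V,R)_n$ (which is finitely generated) is torsion-free. I would argue this by a Hilbert-series comparison. First, $L(-,-)$ commutes with base change: the free Lie algebra $\Lie(V)$ has a basis stable under base change and $(R)$ is the Lie ideal generated by $\mathpzc{r}$, so $L(V,R)\otimes_\RR (\RR/\pi)\cong L(V\otimes\RR/\pi,\,R\otimes\RR/\pi)$ for every prime $\pi$. Over a field PBW is unconditional, so $A(V,R)\otimes\RR/\pi\cong U\bigl(L(V,R)\otimes\RR/\pi\bigr)$, and the graded dimensions of the two sides are tied together by the PBW (Hilbert-series) identity; as the left side has graded dimensions independent of $\pi$, so does $L(V,R)\otimes\RR/\pi$. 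But $L(V,R)_n$, being finitely generated over the PID $\RR$, is free for all but finitely many $\pi$, so this common value equals $\operatorname{rank}_\RR L(V,R)_n$ for \emph{every} $\pi$; hence $L(V,R)_n$ has no torsion at any prime and is free.

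Granting that $L(V,R)$ is $\RR$-free, fix a totally ordered, weight-homogeneous $\RR$-basis of it. The recalled PBW theorem over $\RR$ then says that the ordered monomials on this basis form an $\RR$-basis of $U(L(V,R))=A(V,R)$, and that the associated graded of $A(V,R)$ for the length (PBW) filtration is isomorphic, as a graded $\RR$-algebra, to the symmetric algebra $S_\RR(L(V,R))$; this is the second assertion. The same ordered monomials form an $\RR$-basis of $S_\RR(L(V,R))$ as well, so $A(V,R)\cong S_\RR(L(V,R))$ as $\RR$-modules, which is the first assertion (equivalently, the length filtration on $A(V,R)$ splits because its successive quotients are $\RR$-free).

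The main obstacle is exactly this freeness of $L(V,R)$: it is the step where the hypothesis of Proposition \ref{freemod} --- a single defining relation with distinct leading variables, so that $A(V,R)$ is an $\RR$-free algebra --- gets used, and without it the integral PBW theorem is not available. Everything else is the classical PBW machinery, together with the standard facts that over a PID submodules of free modules are free and finitely generated torsion-free modules are free.
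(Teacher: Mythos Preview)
Your argument is correct and the overall architecture is exactly the paper's: identify $A(V,R)$ with $U(L(V,R))$ via Proposition~\ref{univlie}, verify that $L(V,R)$ is a free $\RR$-module, and then invoke the PBW theorem over $\RR$ to obtain both the module isomorphism $A(V,R)\cong S_\RR(L(V,R))$ and the graded-algebra isomorphism $\mathrm{gr}\,A(V,R)\cong S_\RR(L(V,R))$. The paper in fact records Proposition~\ref{PBW-quad} without a formal proof, saying only that it follows from PBW.

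Where you genuinely diverge is in the proof of freeness of $L(V,R)$. The paper establishes this (in the next subsection) by a short structural argument: over a Dedekind domain the canonical map $L\to U(L)$ is injective (Cartier, Lazard, Cohn), so each weight piece $L(V,R)_w$ sits inside the finitely generated free module $A(V,R)_w$ and is therefore free over the PID $\RR$. Your route instead base-changes to every residue field $\RR/\pi$, uses PBW over fields together with Proposition~\ref{freemod}(3) to see that the graded dimensions of $L(V,R)\otimes\RR/\pi$ are forced by the (prime-independent) Hilbert series of $A(V,R)$, and concludes that $L(V,R)_w$ is torsion-free. This is a legitimate and self-contained alternative: it avoids citing the Cartier--Lazard--Cohn injectivity theorem, at the price of a longer dimension-counting detour. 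The paper's argument is quicker but leans on that external result; yours is more elementary in its inputs.
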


In the above Proposition the length filtration on $U(L)$ for a Lie algebra $L$ is defined inductively by 
$$F_0 U (L) = \RR$$
and
$$F_nU (L) = F_{n-1}U (L) + \mathit{Im}\{ L \otimes F_{n-1}U (L) \to U (L) \otimes U (L) \to U (L)\}.$$
\end{mysubsection}

\begin{mysubsection} {Bases of quadratic Lie algebras} 
For quadratic Lie algebras $L(V,R)$ over a PID $\RR$ as above, one may write down an explicit basis using methods similar to the Diamond Lemma. 

Fix $V$, a free $\RR$-module with basis $(a_1, a_2, \cdots, a_n)$, and $R$ an anti-symmetric element of $V\otimes V$ of the form 
$$a_1\otimes a_2 - a_2\otimes a_1 +  \mathit{terms~not~involving~}a_1\mathit{~and~}a_2.$$
The anti-symmetry condition ensures that $R$ can be expressed as a linear combination of $(a_i\otimes a_j - a_j\otimes a_i)$'s and hence  lies in the free Lie algebra generated by $V$. The proof of Proposition \ref{freemod} shows that for such $V$ and $R$ one obtains a basis of $A(V,R)$ as prescribed in the Diamond Lemma.

Consider the quadratic Lie algebra $L(V,R)$ for $R$ as the above. As a Lie algebra element $R$ has the form  
$$[a_1, a_2] +  \mathit{terms~not~involving~}a_1\mathit{~and~}a_2.$$

\begin{prop}
The $\RR$-module $L(V,R)$ is free.
\end{prop}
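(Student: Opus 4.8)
The plan is to derive freeness of $L=L(V,R)$ from what has already been established about the associated quadratic algebra $A=A(V,R)$, rather than writing down a basis of $L(V,R)$ by hand. As observed just before the statement, the hypotheses on $R$ (anti-symmetry, together with the displayed normal form) place us in the situation of Proposition~\ref{freemod}: after reindexing the basis so that $a_1$ and $a_2$ come last, the relation is, up to sign, of the form $a_2\otimes a_1+(\text{strictly lower monomials})$ with unit leading coefficient. Hence $A$ is a free $\RR$-module by Proposition~\ref{freemod}(1). By Proposition~\ref{univlie}, $A$ is the universal enveloping algebra of $L$, and by Proposition~\ref{PBW-quad} there is an isomorphism of $\RR$-modules $A\cong\mathrm{Sym}_\RR(L)$.

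Given this, the conclusion is one line. As an $\RR$-module $\mathrm{Sym}_\RR(L)=\bigoplus_{j\ge 0}\mathrm{Sym}^j_\RR(L)$, so its degree-one summand $\mathrm{Sym}^1_\RR(L)=L$ is a direct summand of $\mathrm{Sym}_\RR(L)$, hence of $A$. Thus $L$ is isomorphic to a direct summand of the free $\RR$-module $A$, so $L$ is projective, and a projective module over a PID is free. (If one prefers to avoid the statement that infinitely generated projectives over a PID are free, restrict to each weight: $L$ is weight-graded, $L=\bigoplus_{m\ge1}L_m$ with $L_m$ a subquotient of the finitely generated free module $\Lie^m(V)\subseteq V^{\otimes m}$ and hence finitely generated, each $L_m$ is then finitely generated projective, so free, and a direct sum of free modules is free.)

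Along this route there is essentially no obstacle; all the substance sits in the already-available Propositions~\ref{freemod} and \ref{PBW-quad}. If one instead wanted a proof not passing through Proposition~\ref{PBW-quad} --- more in keeping with the title of this subsection --- the natural approach is a Lie-theoretic Diamond lemma (Shirshov's Composition--Diamond lemma): order the Hall basis of $\Lie_\RR(V)$ so that the single defining relation has leading Lie monomial $[a_1,a_2]$; then the $R$-irreducible Hall monomials form an $\RR$-basis of $L$. Here the one thing needing verification --- the analogue of the condition ``$n=1$ and $\alpha(1)\neq\beta(1)$'' in Theorem~\ref{Diamond} --- is that a single relation whose leading monomial involves two distinct letters admits no compositions, so the rewriting is confluent; this is the step I would expect to be the main point. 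A third possibility is a reduction-modulo-$\pi$ argument: using the base-change isomorphism $U(L)\otimes_\RR\RR/\pi\cong U_{\RR/\pi}(L\otimes_\RR\RR/\pi)$, classical PBW over the field $\RR/\pi$, and the fact (from Proposition~\ref{freemod}) that the weightwise ranks of $A(V,R)$ are independent of the ground field, one shows by induction on weight that $L$ remains torsion-free after reduction modulo every prime of $\RR$.
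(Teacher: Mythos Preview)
Your main argument is circular. In the paper, Proposition~\ref{PBW-quad} is explicitly deduced from the classical Poincar\'e--Birkhoff--Witt theorem \emph{for Lie algebras that are free as $\RR$-modules} (see the sentence immediately preceding it). Thus invoking Proposition~\ref{PBW-quad} to conclude that $L(V,R)$ is free presupposes exactly what you are trying to prove. The direct-summand trick $L=\mathrm{Sym}^1_\RR(L)\subset\mathrm{Sym}_\RR(L)\cong A$ is perfectly sound once the isomorphism $\mathrm{Sym}_\RR(L)\cong A$ is in hand, but that isomorphism is not available until after freeness of $L$ has been established.

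The paper's proof sidesteps this by using a weaker input that holds without any freeness hypothesis: over a Dedekind domain (hence over any PID) the canonical map $L\to U(L)$ is \emph{injective} for an arbitrary Lie algebra $L$ (Cartier~\cite{Car58}, Lazard~\cite{Laz54}; cf.\ Cohn~\cite{Cohn63}). Since $U(L(V,R))=A(V,R)$ is free by Proposition~\ref{freemod}, this realizes $L(V,R)$ merely as a \emph{submodule} of a free $\RR$-module, not a direct summand --- but over a PID that is already enough. Passing to weight-graded pieces (precisely as in your parenthetical remark) reduces to the finitely generated case and finishes the argument. So the skeleton of your ``restrict to each weight'' alternative is correct; what must be replaced is the appeal to Proposition~\ref{PBW-quad} by the injectivity statement $L\hookrightarrow U(L)$ over Dedekind domains. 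Your proposed Shirshov-type and reduction-mod-$\pi$ routes are plausible but considerably heavier than this one-line fix.
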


\begin{proof}
From Proposition \ref{univlie} we obtain that the enveloping algebra of $L(V,R)$ is $A(V,R)$ which is free by Proposition \ref{freemod}. Over a Dedekind Domain the map from the Lie algebra to its universal enveloping algebra is an inclusion (\cite{Car58},\cite{Laz54}; also see \cite{Cohn63}). It follows that the map $L(V,R)$ to $A(V,R)$ is an inclusion.

 Introduce the weight grading on $L(V,R)$ and $A(V,R)$ by declaring the grading of $V$ to be $1$. Then the inclusion of $L(V,R)$ in $A(V,R)$ is a graded map. Now each graded piece of $A(V,R)$ is free and finitely generated and the corresponding homogeneous piece of $L(V,R)$ is a submodule. This is free as $\RR$ is a PID.   
\end{proof}

Denote by $L^{w}(V,R)$ (respectively $A^w(V,R)$) the homogeneous elements of $L(V,R)$ (respectively $A(V,R)$) of weight $w$. Recall from \cite{LaRam95} and \cite{Loth97} the concept of a Lyndon basis of a Lie algebra described by generators and relations.

\begin{defn}
Define an order on $V$ by $a_1<a_2<\ldots <a_r$. A {\it word} (in the ordered alphabet $\{a_1,\ldots,a_r\}$) is a monomial in the $a_i$'s. The number of alphabets in a word $w$ is denoted by $|w|$ and called the {\it length} of the word. The lexicographic ordering on the words will be denoted by $>$, i.e., $w> \mathpzc{w}$ means that the word $w$ is lexicographically bigger than $\mathpzc{w}$.

A {\it Lyndon word} is one which is lexicographically smaller than its cyclic rearrangements. Fix an order on all words by declaring words $w\, \preceq\, \mathpzc{w}$ if $|w|\leq |\mathpzc{w}|$, and if $|w|=|\mathpzc{w}|$ then $w\geq \mathpzc{w}$ in lexicographic order. 
\end{defn}

Let $L$ be the set of Lyndon words. Any Lyndon word $\mathpzc{l}$ with $|\mathpzc{l}|>1$ can be uniquely decomposed into $\mathpzc{l}=\mathpzc{l}_1\mathpzc{l}_2$ where $\mathpzc{l}_1$ and $\mathpzc{l}_2$ are Lyndon words so that $\mathpzc{l}_2$ is the larget proper Lyndon word occuring from the right in $\mathpzc{l}$. Each Lyndon word $\mathpzc{l}$ is associated to an element $b(\mathpzc{l})$ of the free Lie algebra on $V$. This is done inductively by setting $b(a_i)=a_i$ and for the decomposition above 
$$b(\mathpzc{l})=b(\mathpzc{l}_1\mathpzc{l}_2):=[b(\mathpzc{l}_1),b(\mathpzc{l}_2)].$$
The image, under $b$, of the set of Lyndon words form a basis of $\Lie(a_1,\ldots,a_r)$, the free Lie algebra on $a_1,\ldots ,a_r$ over any commutative ring (\cite{Loth97}, Theorem 5.3.1). 

Let $J$ be a Lie algebra ideal in $\Lie(a_1,\ldots,a_r)$. Denote by $I$ the ideal generated by $J$ in $T(a_1,\ldots,a_r)$. Define a Lyndon word $\mathpzc{l}$ to be {\it $J$-standard} if $b(\mathpzc{l})$ cannot be written as a linear combination of strictly smaller (with respect to $\preceq$) Lyndon words modulo $J$. Let $S^JL$ be the set of $J$-standard Lyndon words. Similarly define the set $S^I$ of $I$-standard words with respect to the order $\preceq$. We will make use of the following results.
\begin{prop} \label{SL=L} {\bf (\cite{LaRam95}, Corollary 2.8)}
With $I,J$ as above we have $L \cap S^I = S^JL$.  
\end{prop}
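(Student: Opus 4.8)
\textbf{Proof proposal for Proposition \ref{SL=L}.}

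The plan is to prove the two inclusions $L\cap S^I\subseteq S^JL$ and $S^JL\subseteq L\cap S^I$ separately, both by contrapositive, exploiting the fact that the Lyndon basis is compatible with the length filtration on the tensor algebra. The key structural input is the Poincar\'e--Birkhoff--Witt theorem as recorded in Proposition \ref{PBW-quad}: the associated graded of $T(a_1,\ldots,a_r)$ with respect to the length filtration is the symmetric algebra on $\Lie(a_1,\ldots,a_r)$, so that products $b(\mathpzc{l}_1)\cdots b(\mathpzc{l}_k)$ of Lyndon-basis Lie elements (taken in increasing order) form a $\RR$-basis of $T(a_1,\ldots,a_r)$, and each such PBW monomial has leading word (under $\preceq$) equal to the concatenation $\mathpzc{l}_1\cdots\mathpzc{l}_k$. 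I would begin by recalling/setting up this dictionary carefully, since everything hinges on the comparison between the order $\preceq$ on words, the leading-word function on $\Lie(a_1,\ldots,a_r)$, and the leading-word function on $T(a_1,\ldots,a_r)$.

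For the inclusion $S^JL\subseteq L\cap S^I$: suppose $\mathpzc{l}$ is a Lyndon word that is not $I$-standard, i.e.\ $\mathpzc{l}\in S^I$ fails, meaning $\mathpzc{l}$ (as a monomial in $T$) is a linear combination modulo $I$ of strictly $\preceq$-smaller words. I would push this relation through the PBW basis of $T$: express $\mathpzc{l}$ and each smaller word in the PBW basis, and observe that the Lie element $b(\mathpzc{l})$, whose leading word is exactly $\mathpzc{l}$, must then be expressible modulo $J$ in terms of Lie-basis elements with strictly smaller leading words, forcing $\mathpzc{l}$ to fail $J$-standardness. The converse inclusion $L\cap S^I\subseteq S^JL$ runs the same argument backwards: if $b(\mathpzc{l})$ is a combination of strictly smaller $b(\mathpzc{l}')$ modulo $J$, then passing to $T$ and to the ideal $I$ generated by $J$ shows $\mathpzc{l}$ is a combination of strictly smaller words modulo $I$, so $\mathpzc{l}\notin S^I$. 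The only subtlety on this side is that $I$ is the \emph{associative} ideal generated by $J$, so one must check that a Lie relation $b(\mathpzc{l})\equiv \sum c_i b(\mathpzc{l}_i) \pmod{J}$ indeed gives $\mathpzc{l}\equiv \sum c_i \mathpzc{l}_i$ modulo $I$ at the level of leading words — which follows because the difference lies in $J\subseteq I$ and the PBW leading-word function is multiplicative on the associated graded.

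The main obstacle I anticipate is the bookkeeping needed to guarantee that the leading-word (highest-term) function behaves well under both the Lie bracket and associative multiplication simultaneously, and that no cancellation of leading terms occurs when we rewrite an element modulo $J$ in terms of smaller Lyndon basis elements — i.e.\ verifying that ``$b(\mathpzc{l})$ reduces modulo $J$ to a combination of $\preceq$-smaller $b(\mathpzc{l}')$'' is genuinely equivalent to ``$\mathpzc{l}$ reduces modulo $I$ to $\preceq$-smaller words,'' with the equivalence surviving in both directions. This is precisely the content of \cite{LaRam95}, Corollary 2.8, and I would lean on their triangularity lemmas relating Lyndon Lie brackets to their leading words; the role of Proposition \ref{PBW-quad} here is to certify that the relevant change-of-basis matrices between PBW monomials and ordered products of words are unitriangular with respect to $\preceq$, which is exactly what makes the reduction procedures on the Lie side and the associative side match up. Since $\RR$ is a PID and all modules in sight are free (Proposition \ref{freemod}), no torsion phenomena interfere with this triangularity.
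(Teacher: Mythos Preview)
The paper does not prove Proposition \ref{SL=L}; it is quoted verbatim from \cite{LaRam95}, Corollary 2.8, and used as a black box (together with Proposition \ref{SLbasis}) in the proof of Theorem \ref{Liebasis}. So there is nothing in the paper to compare your proposal against.

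That said, your outline is in the spirit of the Lalonde--Ram argument: the crux is the unitriangularity (with respect to $\preceq$) of the change of basis between PBW monomials $b(\mathpzc{l}_1)\cdots b(\mathpzc{l}_k)$ and the ordinary monomial basis of $T(a_1,\ldots,a_r)$. One remark: your appeal to Proposition \ref{PBW-quad} is slightly off-target, since Proposition \ref{SL=L} is stated for an \emph{arbitrary} Lie ideal $J$, not just the quadratic $(R)$ satisfying the Diamond-lemma hypothesis; what you actually need is PBW for the free Lie algebra $\Lie(a_1,\ldots,a_r)$ itself (the case $R=0$), which of course holds unconditionally. The genuinely delicate direction is $S^JL\subseteq L\cap S^I$: knowing that the Lyndon word $\mathpzc{l}$ reduces modulo the \emph{associative} ideal $I$ to smaller words does not immediately say that the Lie element $b(\mathpzc{l})$ reduces modulo the \emph{Lie} ideal $J$ to smaller Lyndon brackets, because an element of $I$ is a sum of terms $A\cdot j\cdot B$ with $j\in J$ and $A,B$ arbitrary, and extracting a Lie relation from this requires exactly the triangularity/basis statements of \cite{LaRam95} that you allude to. Your sketch correctly identifies this as the main obstacle; filling it in is essentially reproducing the cited proof.
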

\begin{prop}\label{SLbasis}{\bf (\cite{LaRam95}, Theorem 2.1)}
The set $S^JL$ is a basis of $\Lie(a_1,\ldots,a_r)/ J$ over $\Q$. 
\end{prop}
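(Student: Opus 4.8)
The plan is to establish the two defining properties of a basis separately: that the classes of $b(\mathpzc{l})$ for $\mathpzc{l}\in S^JL$ span $\Lie(a_1,\ldots,a_r)/J$ over $\Q$, and that they are linearly independent. Two preliminary facts will be used throughout. First, $\preceq$ is a well-order on words: within a fixed length there are only finitely many words (totally ordered among themselves by reverse lexicographic order) and the lengths are well-ordered, so every nonempty set of words has a $\preceq$-least element; this legitimizes induction over $\preceq$ and the selection of $\preceq$-maximal elements of finite sets. Second, I would invoke the triangularity of the Lyndon basis of the free Lie algebra (\cite{Loth97}, \S5.3): for a Lyndon word $\mathpzc{l}$, expanded in the monomial basis of $T(a_1,\ldots,a_r)$,
$$b(\mathpzc{l}) \;=\; \mathpzc{l} \;+\; \sum_{\mathpzc{w}\,\prec\,\mathpzc{l}} \lambda_{\mathpzc{w}}\,\mathpzc{w},$$
so that $\mathpzc{l}$ is the $\preceq$-maximal monomial occurring in $b(\mathpzc{l})$, with coefficient $1$.

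For spanning, I would argue by $\preceq$-induction on Lyndon words. Since $\{b(\mathpzc{l}):\mathpzc{l}\in L\}$ is a basis of $\Lie(a_1,\ldots,a_r)$, it suffices to show that the class of $b(\mathpzc{l})$ modulo $J$ is a $\Q$-combination of the classes of $b(\mathpzc{l}')$ with $\mathpzc{l}'\in S^JL$, for every Lyndon word $\mathpzc{l}$. If $\mathpzc{l}\in S^JL$ there is nothing to prove; otherwise, the definition of $J$-standard says precisely that $b(\mathpzc{l})$ is congruent modulo $J$ to a linear combination of $b(\mathpzc{l}')$ with $\mathpzc{l}'$ Lyndon and $\mathpzc{l}'\prec\mathpzc{l}$, to each of which the inductive hypothesis applies. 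Well-foundedness of $\preceq$ guarantees the recursion terminates.

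For linear independence, I would pass to the associative setting. Let $I$ be the two-sided ideal generated by $J$ in $T(a_1,\ldots,a_r)$, so that $T(a_1,\ldots,a_r)/I$ is the universal enveloping algebra of $\Lie(a_1,\ldots,a_r)/J$. Bergman's Diamond Lemma \cite{Berg78} --- after completing the presentation of $I$ to a reduction system that is confluent for the order $\preceq$ --- shows that the $I$-standard words $S^I$ form a $\Q$-basis of $T(a_1,\ldots,a_r)/I$ and that the normal form of any monomial $m$ is a $\Q$-combination of $I$-standard words each $\preceq m$. Now suppose $\sum_{\mathpzc{l}\in S^JL} c_{\mathpzc{l}}\,b(\mathpzc{l})\in J$ with not all $c_{\mathpzc{l}}$ zero, and let $\mathpzc{l}_0$ be the $\preceq$-maximal Lyndon word with $c_{\mathpzc{l}_0}\neq 0$. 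By the triangularity identity this sum equals $c_{\mathpzc{l}_0}\,\mathpzc{l}_0$ plus a combination of monomials $\prec\mathpzc{l}_0$ in $T(a_1,\ldots,a_r)$. Reducing modulo $I$, the monomials $\prec\mathpzc{l}_0$ contribute only $I$-standard words $\prec\mathpzc{l}_0$, while $\mathpzc{l}_0$ is itself $I$-standard because $\mathpzc{l}_0\in S^JL = L\cap S^I$ by Proposition \ref{SL=L}. Hence the image of the sum in $T(a_1,\ldots,a_r)/I$, expressed in the $S^I$-basis, has nonzero coefficient $c_{\mathpzc{l}_0}$ on $\mathpzc{l}_0$, so it is nonzero; this contradicts $\sum c_{\mathpzc{l}}\,b(\mathpzc{l})\in J\subseteq I$. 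Therefore the classes $\{b(\mathpzc{l}):\mathpzc{l}\in S^JL\}$ are $\Q$-linearly independent, and together with the previous paragraph this proves the proposition.

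The main obstacle I anticipate is the associative input: verifying that the presentation of $I$ can be completed to a rewriting system that is confluent for the term order $\preceq$, so that $S^I$ is genuinely a basis of $T(a_1,\ldots,a_r)/I$ and normal forms involve only $\preceq$-smaller standard words. Everything else --- the well-order property, the triangularity of the Lyndon basis, and the bookkeeping relating the order $\preceq$ on words to the comparison of elements of $S^I$ --- is routine once that foundation is established, with Proposition \ref{SL=L} serving as the bridge that transfers information about $I$-standard words to information about $J$-standard Lyndon words.
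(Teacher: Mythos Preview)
The paper does not supply its own proof of this proposition; it is quoted as Theorem~2.1 of \cite{LaRam95} and used as a black box in the proof of Theorem~\ref{Liebasis}. So there is no argument in the paper to compare against.

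Your argument is correct, and in fact cleaner than you give it credit for. The step you flag as ``the main obstacle'' --- completing the presentation of $I$ to a confluent rewriting system so that the Diamond Lemma applies --- is unnecessary. Over a field, the statement that $S^I$ is a $\Q$-basis of $T(a_1,\ldots,a_r)/I$, with every word reducing modulo $I$ to a combination of $\preceq$-smaller $I$-standard words, follows directly from the definition of $I$-standard by exactly the same leading-term trick you already used for spanning: if a nontrivial $\Q$-linear combination of $I$-standard words lay in $I$, then the $\preceq$-maximal word occurring with nonzero coefficient would be expressible modulo $I$ in terms of strictly $\preceq$-smaller words (after dividing by its coefficient), contradicting its $I$-standardness. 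No Gr\"obner completion or confluence check is needed; working over $\Q$ is precisely what makes this elementary. With that simplification your proof goes through verbatim: spanning by $\preceq$-induction, and linear independence via the triangularity $b(\mathpzc{l})=\mathpzc{l}+(\text{$\preceq$-lower terms})$ combined with Proposition~\ref{SL=L} to identify $S^JL$ with $L\cap S^I$.
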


We restrict our attention to $J=(R)$ where $R$ has the form described above. Let $S_RL$ be the set of all Lyndon words which does not contain $a_1a_2$ consecutively in that order. From the Diamond Lemma, $S^I$ is the same as the $R$-irreducible monomials which are the monomials not of the form $Aa_1a_2B$. It follows that $S^{(R)}L=S^I \cap L = S_RL$.  We prove the following theorem.
\begin{theorem}\label{Liebasis}
Suppose $\RR$ is a localization of $\Z$. Then the images of $b(S_RL)$ form a basis of $L(V,R)$.
\end{theorem}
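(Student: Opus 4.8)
The plan is to reduce the statement over a localization $\RR$ of $\Z$ to the known rational statement (Proposition \ref{SLbasis} together with Proposition \ref{SL=L}, which identify $b(S_RL)$ as a $\Q$-basis of $\Lie(V)\otimes\Q/(R)\otimes\Q = L(V,R)\otimes\Q$) and then use freeness and a rank/injectivity argument to upgrade this to $\RR$. First I would record that $L(V,R)$ is a free $\RR$-module (the Proposition immediately preceding the theorem), graded by weight, with each graded piece $L^w(V,R)$ free of finite rank; and that by Proposition \ref{univlie} and Proposition \ref{freemod} its universal enveloping algebra $A(V,R)$ is free over $\RR$ with the $R$-irreducible monomials as a basis. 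Since $\RR$ is a localization of $\Z$, hence a PID contained in $\Q$, tensoring with $\Q$ is faithfully exact on the relevant finitely generated free modules and commutes with all the constructions $\Lie(-)$, $T(-)$, $A(-)$, $L(-)$: in particular $L(V,R)\otimes_\RR\Q \cong L(V\otimes\Q, R\otimes\Q)$ and likewise for $A$.

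The key steps, in order. (1) Show $b(S_RL)\subset L(V,R)$ is a linearly independent set over $\RR$: the images of these elements in $L(V,R)\otimes_\RR\Q$ are exactly $b(S_RL)$ viewed in $L(V\otimes\Q,R\otimes\Q)$, which is a $\Q$-basis by Proposition \ref{SLbasis} and Proposition \ref{SL=L} (using $S^{(R)}L = S_RL$ as established in the paragraph before the theorem); a dependence over $\RR$ would give one over $\Q$. Since $L(V,R)\hookrightarrow L(V,R)\otimes\Q$ (freeness, torsion-freeness), independence over $\RR$ follows. (2) Show $b(S_RL)$ spans $L(V,R)$ over $\RR$. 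Here I would work weight by weight. Fix a weight $w$; $L^w(V,R)$ is a quotient of the free Lie algebra piece $\Lie^w(V)$, which has the Lyndon basis $b(L^w)$ over any commutative ring. So $b(L^w)$ spans $L^w(V,R)$; the content is to eliminate the non-standard Lyndon words, i.e. to show that for $\mathpzc{l}\in L^w\setminus S_RL$, $b(\mathpzc{l})$ is an $\RR$-linear combination of $b(S_RL\cap L^w)$ modulo $(R)$. This is the "rewriting"/Diamond-type reduction: a non-standard Lyndon word contains the pattern $a_1 a_2$, and applying the relation $\mathpzc{r}$ (which has leading term $[a_1,a_2]$ with coefficient $1$, a unit) rewrites $b(\mathpzc{l})$ modulo $(R)$ as a combination of $b$ of strictly $\preceq$-smaller Lyndon words with coefficients in $\RR$ — crucially, no denominators are introduced because the leading coefficient is a unit. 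Descending induction on $\preceq$ within weight $w$ (the set of Lyndon words of a given weight is finite) then expresses every $b(\mathpzc{l})$ in terms of $b(S_RL\cap L^w)$. (3) Combine: $b(S_RL)$ is independent and spanning over $\RR$, hence a basis.

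The main obstacle is step (2), the spanning argument, and specifically making the rewriting precise over $\RR$ rather than $\Q$. Over $\Q$ one can always divide by the leading coefficient, but over $\RR$ one must check that the reduction uses only the relation $\mathpzc{r}$ (whose leading coefficient is $1$) and its Lie-ideal consequences $[\mathpzc{r}, a_{i_1}, \ldots]$, each of which still has unit leading coefficient in the appropriate sense — this is exactly the integral Diamond Lemma phenomenon already exploited in Proposition \ref{freemod} for the associative algebra, now transported to the Lie setting via Proposition \ref{SL=L}. Concretely I would deduce the Lie-level reduction from the associative-level one: expand $b(\mathpzc{l})$ as a sum of monomials in $T(V)$ via its bracketing; the top monomial is $\mathpzc{l}$ itself with coefficient $1$ (a standard property of the Lyndon bracketing), so if $\mathpzc{l}$ is $R$-reducible (contains $a_1a_2$) it can be rewritten in $A(V,R)$ using the $R$-irreducible basis with $\RR$-coefficients, and pulling this back along the inclusion $L(V,R)\hookrightarrow A(V,R)$ and re-expressing in the Lyndon basis gives the desired $\RR$-combination of strictly smaller $b(\mathpzc{l}')$'s modulo $(R)$. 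A secondary, purely bookkeeping point is to confirm that the set $S^{(R)}L$ occurring in the cited propositions really is $S_RL$; the excerpt asserts this just above the theorem, so I would simply invoke it.
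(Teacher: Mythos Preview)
Your proposal is correct and follows essentially the same two-step strategy as the paper: linear independence is pulled back from the $\Q$-statement (Propositions \ref{SLbasis} and \ref{SL=L}), and spanning is obtained by iteratively reducing non-standard Lyndon words to $\preceq$-smaller ones. The paper's write-up is much terser on the spanning step --- it simply notes that a Lyndon word outside $S^{(R)}L$ is by definition expressible via smaller ones, iterates, and then invokes the identification $S^{(R)}L=S_RL$ established just before the theorem --- whereas you spell out why no denominators appear (the leading coefficient of $\mathpzc{r}$ is a unit).

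One caution: your final ``concrete'' elaboration, where you expand $b(\mathpzc{l})$ in the $R$-irreducible monomial basis of $A(V,R)$ and then propose to ``re-express in the Lyndon basis'' of $L(V,R)$, is circular as stated --- re-expressing in that basis is precisely what you are trying to establish. This detour is not needed; the direct reduction you describe in step (2) already does the job and is what the paper relies on. If you want a clean non-circular way to package the passage through $A(V,R)$, observe instead that the triangularity of $b(\mathpzc{l})=\mathpzc{l}+(\text{lower monomials})$ for $\mathpzc{l}\in S_RL$ shows the $\RR$-span of $b(S_RL)$ is a \emph{pure} submodule of the free module $A(V,R)$; combined with the fact that it has full rank in $L(V,R)$ (the $\Q$-statement), purity forces it to equal $L(V,R)$.
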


\begin{proof}
Proposition \ref{SLbasis} implies the Theorem for $\RR=\Q$. We use it to obtain the result for $\RR$, a localization of $\Z$. First check that the images must indeed generate the Lie algebra $L(V,R)$. Any Lyndon word outside $S^{(R)}L$ can be expressed using lesser elements and continuing in this way we must end up with elements in $S^{(R)}L$. Therefore $S^{(R)}L$ spans $L(V,R)$ and as $S^{(R)}L=S_RL$ the latter also spans. 

It remains to check that $S_RL$ is linearly independent. From Proposition \ref{SLbasis}, $S_RL$ is linearly independent over $\Q$ the fraction field of $\RR$. It follows that it must be linearly independent over $\RR$.
\end{proof}
\end{mysubsection}


\section{Examples of quadratic algebras coming from manifolds}\label{quadcoh}

In this section we prove that cohomology algebras of $(n-1)$-connected $(2n+1)$-manifolds with field coefficients are all quadratic algebras. Contrast this with \cite{BaBa15_unpub} where it is shown that the cohomology algebras of $(n-1)$-connected $2n$-manifolds are quadratic whenever the $n^{th}$-Betti number is $\geq 1$. We first consider a general kind of graded commutative algebra of which the cohomology algebras of $(n-1)$-connected $(2n+1)$-manifolds are an example, and then verify a simple criterion which makes these algebras quadratic.

Let $A$ be a graded commutative algebra over the field $k$ (assume that all of $A$ are in non-negative grading, that is $A_{<0} =0$). Suppose that $A$ contains a vector space $V$ and an element $z\in A_m$ such that the only non-trivial multiplication in positive grading of $A$ occurs when two elements of $V$ in homogeneous degree $r$ and $m-r$ multiply to give a multiple of $z$. Thus, we have a filtration $k\{1\}\subset k\oplus V \subset k\oplus V \oplus k\{z\} =A$ of $A$ as graded vector spaces, such that the multiplication rules are given by 
$$1\cdot v= v,~ 1\cdot z=z,~ v \cdot z=0~ \forall~ v\in V,$$
and there is a bilinear form $\phi : V\otimes V \to k$ such that 
$$v_1\cdot v_2 = \phi(v_1,v_2)z.$$ 
Note that such algebras are classified by $V$ and $\phi$, and we will henceforth call these {\it form algebras}. The bilinear form $\phi$ is graded-symmetric, that is, 
$$\phi(v_1,v_2)= (-1)^{|v_1||v_2|}\phi(v_2,v_1).$$
We say that such a graded-symmetric bilinear form is {\it non-degenerate} if it induces an isomorphism between $V$ and $V^\ast$ given by $v\mapsto \beta_\phi(v)$ where $\beta_\phi(v)(w)= \phi(w,v)$. We note the Proposition below whose proof is the same as that of \cite[Theorem 3.2]{MiHu73}
\begin{prop}\label{bil-spl}
Suppose that $W$ is a graded subspace of $V$ on which the restriction of $\phi$ is non-degenerate. Then $V\cong W\oplus W^\perp$. 
\end{prop}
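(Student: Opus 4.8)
The statement is the analogue of the standard fact that a symmetric bilinear form on a vector space splits off any nondegenerate subspace as an orthogonal direct summand; here the only complication is the grading and the graded-symmetry of $\phi$. The plan is to mimic the proof of \cite[Theorem 3.2]{MiHu73} verbatim, checking at each step that the graded signs cause no trouble.

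First I would set $W^\perp = \{v \in V : \phi(v,w) = 0 \text{ for all } w \in W\}$; since $\phi$ is graded-symmetric, the left and right radicals of $W$ coincide, so $W^\perp$ is well-defined and is a graded subspace of $V$ (it is the kernel of the composite $V \to W^\ast$, $v \mapsto \beta_\phi(v)|_W$, which is a map of graded vector spaces). Next I would show $W \cap W^\perp = 0$: if $v \in W \cap W^\perp$ then $\phi(v,w) = 0$ for all $w \in W$, so $\beta_\phi(v)|_W = 0$; but $\phi|_W$ is nondegenerate by hypothesis, so $\beta_{\phi|_W}(v) = 0$ forces $v = 0$. Finally I would show $V = W + W^\perp$: given $v \in V$, the functional $\beta_\phi(v)|_W \in W^\ast$ is, by nondegeneracy of $\phi|_W$, equal to $\beta_{\phi|_W}(w_0)$ for a unique $w_0 \in W$ (and $w_0$ can be chosen homogeneous of the same degree as $v$, since $\phi$ respects the grading); then $v - w_0 \in W^\perp$ because $\phi(v - w_0, w) = \phi(v,w) - \phi(w_0, w) = 0$ for all $w \in W$. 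Combining the last two points gives the internal direct sum decomposition $V \cong W \oplus W^\perp$.

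There is essentially no main obstacle — the argument is routine linear algebra. The only points requiring a moment's care are: (i) confirming that graded-symmetry makes $W^\perp$ unambiguous (no distinction between a "left" and "right" perp), which follows immediately from $\phi(v_1,v_2) = (-1)^{|v_1||v_2|}\phi(v_2,v_1)$; and (ii) tracking that all the maps $V \to V^\ast$, $V \to W^\ast$ in play are graded, so that the complement $W^\perp$ we produce is genuinely a graded subspace rather than merely an ungraded one. Since the hypothesis packages $\phi|_W$ being nondegenerate exactly as the statement that $\beta_{\phi|_W}\colon W \to W^\ast$ is an isomorphism, both the injectivity input (for $W \cap W^\perp = 0$) and the surjectivity input (for $V = W + W^\perp$) are directly available, and the proof is a two-line formal consequence once $W^\perp$ is set up correctly.
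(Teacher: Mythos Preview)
Your proposal is correct and is precisely the argument the paper has in mind: the paper does not give its own proof but simply notes that ``the proof is the same as that of \cite[Theorem 3.2]{MiHu73}'', which is exactly the orthogonal-complement argument you have written out, with the minor adaptations for the grading that you flag. There is nothing to add.
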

We use the notation $H^s$ for the form whose matrix is $\begin{bmatrix} 0 & 1 \\ 1& 0\end{bmatrix}$, and $H^{a.s}$ for the form whose matrix is  $\begin{bmatrix} 0 & 1 \\ -1& 0\end{bmatrix}$. 

We let $R=\Ker( \phi)$ and ask the question whether the associated form algebra $A$ is the quadratic algebra $A(V,R)$. We  prove 
\begin{prop}\label{bil-quad}
If either $H^s \subset V$ or $H^{a.s}\subset V$, the algebra $A$ is isomorphic to the quadratic algebra $A(V,R)$. 
\end{prop}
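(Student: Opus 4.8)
The plan is to exhibit an explicit isomorphism between the form algebra $A$ and the quadratic algebra $A(V,R)$ where $R = \Ker(\phi)$, using the presence of a hyperbolic summand to produce a convenient basis. First I would apply Proposition \ref{bil-spl} with $W = H^s$ (or $W = H^{a.s}$), which is non-degenerate by hypothesis, to get a splitting $V \cong W \oplus W^\perp$; write $e, f$ for the standard basis of $W$ so that $\phi(e,f) = 1$ and $\phi(e,e) = \phi(f,f) = 0$ in the symmetric case (and the obvious sign change in the anti-symmetric case). The point of this reduction is that in $A(V,R)$, by the construction of a form algebra, the relation ideal $R$ is the kernel of $\phi$, which we can now describe concretely: it is spanned by $W^\perp \otimes V$, $V\otimes W^\perp$ (the parts that pair trivially against everything), together with $e\otimes e$, $f\otimes f$, and $e\otimes f - f\otimes e$ (in the symmetric case $e\otimes f + f\otimes e$ would be in the kernel only up to the correct sign — I need to be careful which graded-symmetry convention is in force, but the element $e\otimes f - \phi(e,f)^{-1}(\text{stuff})$ picking out the single non-degenerate pair is what matters).

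Next I would construct the map. There is always a canonical surjection $\pi\colon A(V,R) \to A$: the tensor algebra $T(V)$ maps onto $A$ by sending $V$ identically into $A_{*}$ and extending multiplicatively (using $v_1 v_2 = \phi(v_1,v_2) z$), and since $R = \Ker(\phi)$ maps to $0$ in $A$ — indeed any $r \in R$ has $\pi(r) = \phi(r) z = 0$ — this factors through $A(V,R)$. Note $A$ is generated by $V$ together with $z$, but $z$ is hit: pick $v_1, v_2$ with $\phi(v_1,v_2) \neq 0$ (which exist precisely because $H^s$ or $H^{a.s}$ embeds), so $z$ is in the image. Hence $\pi$ is surjective. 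It remains to show $\pi$ is injective, i.e. $\dim_k A(V,R) \le \dim_k A = \dim_k V + 2$ in each weight: weight $0$ is $k$, weight $1$ is $V$, and all higher weights must vanish in $A(V,R)$ except weight $2$ which should be $1$-dimensional (spanned by the class of $z$).

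The main obstacle — and the only real content — is this dimension count, specifically showing that $A(V,R)$ is concentrated in weights $\le 2$ with $A(V,R)^{(2)}$ one-dimensional and $A(V,R)^{(\ge 3)} = 0$. Here is where I would invoke the Diamond-Lemma machinery developed in Section \ref{quadassLie}: choose an ordered basis of $V$ putting $e, f$ (the hyperbolic pair) in the top two slots, so that the single "genuine" quadratic relation has leading monomial $e\otimes f$ (or $f \otimes e$) with the two indices distinct, while all the other generators of $R$ are "monomial" relations $v_i \otimes v_j = 0$. Then the $R$-irreducible monomials are exactly: $1$, the $v_i$, and the single surviving degree-$2$ word — any longer word necessarily contains a forbidden sub-word ($e f$, or one of the $v_i v_j = 0$ patterns), because once you have three letters at least one adjacent pair pairs to zero or is the reducible pair $ef$. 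By Proposition \ref{freemod}/Theorem \ref{Diamond} these irreducibles form a basis, giving $\dim A(V,R) = \dim V + 2$, matching $\dim A$, so $\pi$ is an isomorphism. I would need to check honestly that after the splitting the relation set really does satisfy the hypothesis of Theorem \ref{Diamond} (i.e. it is generated, up to the monomial relations which are handled separately, by one relation with $\alpha \neq \beta$), and that the combinatorics of "every length-$3$ word reduces" genuinely holds — that is the step most likely to hide a subtlety, particularly reconciling the graded-symmetry sign of $\phi$ with the anti-symmetric versus symmetric hyperbolic form.
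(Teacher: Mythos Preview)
Your overall plan --- construct the surjection $\tau\colon A(V,R)\to A$ and then prove injectivity by showing $A(V,R)$ is concentrated in weights $\le 2$ --- is exactly the paper's strategy. But your execution of the dimension count has a concrete error and leans on machinery that is not available in the form you need.

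The error is in your description of $R$. It is \emph{not} true that $W^\perp\otimes V$ and $V\otimes W^\perp$ lie in $\Ker(\phi)$: the complement $W^\perp$ is orthogonal only to $W$, and $\phi|_{W^\perp\otimes W^\perp}$ may well be nonzero (indeed, in the manifold examples of Section~\ref{quadcoh} it is nonzero whenever $r\ge 2$). So your proposed generating set for $R$ is wrong, and any irreducible-monomial count built on it will not go through. Second, even with a correct generating set, Theorem~\ref{Diamond} in this paper is stated only for $R$ generated by a \emph{single} relation; here $R$ is a hyperplane in $V\otimes V$, so you would need the full Bergman Diamond Lemma with all overlap ambiguities resolved, not the special case recorded here.

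The paper avoids both issues by a direct argument that uses only one fact: since $R=\Ker(\phi)$ is a hyperplane, $A(V,R)^{(2)}=(V\otimes V)/R$ is one-dimensional, so every nonzero weight-$2$ element is a scalar multiple of $v_1v_2$ for the hyperbolic pair $v_1,v_2$. Given $w_1w_2w_3$: if $w_2w_3=0$ done; else replace it by $v_1v_2$. Now repeat on $w_1v_1$: if zero, done; else it is a multiple of $v_1v_2$, reducing the question to $v_1v_2v_2$, which vanishes since $\phi(v_2,v_2)=0$. No splitting of $V$, no Gr\"obner machinery --- just the hyperbolic pair and the codimension-one nature of $R$.
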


\begin{proof}
If either $H^s \subset V$ or $H^{a.s}\subset V$, there are  $v_1$ and $v_2$ $\in V$ such that 
$$\phi(v_1,v_1)=0 = \phi(v_2,v_2),~\phi(v_1,v_2)=1.$$
We may grade $A(V,R)$ by weight by declaring the weight of the elements of $V$ as $1$. We obtain an algebra map 
$$\tau : A(V,R) \to A$$ 
by sending $V$ to $V$ by identity. Now this induces a linear isomorphism  
$$A(V,R)^{\leq 2} \to A$$
where $A(V,R)^{\leq 2}$ consists of the elements in weight grading $\leq 2$. In order to show that $\tau$ is an isomorphism we need to check  that $A(V,R)= A(V,R)^{\leq 2}$, that is, there are no non-trivial elements of weight $\geq 3$ in $A(V,R)$. It suffices to show that elements of the form $w_1\otimes w_2\otimes w_3$ map to $0$ in $A(V,R)$. 

If $w_2\otimes w_3$ is non-zero in $A(V,R)$ we have that $w_2\otimes w_3$ is some non-trivial multiple of $v_1\otimes v_2$. Thus, we may assume $w_2=v_1$ and $w_3=v_2$. Applying the same argument for $w_1\otimes v_1$ we see that it is enough to show that $v_1\otimes v_2 \otimes v_2$ is $0$. This is now clear as $\phi(v_2,v_2)=0$. 
\end{proof}

\begin{exam}
We observe that the above condition is necessary. Suppose we have a basis $v_1,\cdots, v_k$ of $V$ and $\phi$ is given by 
$$\phi(v_1,v_1)=1, ~~ \phi(v_i,v_j)=0 ~\mbox{if } (i,j)\neq (1,1).$$
Then, $R=\Ker(\phi)$ has a basis given by $v_i\otimes v_j$ where $(i,j)\neq (1,1)$. Note that $R\otimes V + V\otimes R$ does not contain the element  $v_1\otimes v_1\otimes v_1$ and so the algebra $A(V,R)$ has non-trivial elements in weight $\geq 3$. It follows that the associated algebra $A$ is not a quadratic algebra. 
\end{exam}


We apply Proposition \ref{bil-quad} to the cohomology ring of an $(n-1)$-connected $(2n+1)$-manifold $M$ ($n\geq 2$). From Poincar\'e duality, the homology of $M$ has the form 
\begin{myeq} \label{Mhom}
H_i(M)=\left \{\begin{array}{rl} 
                \Z &\mbox{if}~i=0,2n+1 \\
                \Z^r \oplus G &\mbox{if}~i=n  \\
               \Z^r                &\mbox{if} ~i=n+1           \\
0    &\mbox{otherwise},                   
\end{array}\right.
\end{myeq}
and the cohomology
\begin{myeq}\label{Mcoh}
H^i(M)=\left \{\begin{array}{rl} 
                \Z &\mbox{if}~i=0,2n+1 \\
                \Z^r&\mbox{if}~i=n  \\
               \Z^r    \oplus G              &\mbox{if} ~i=n+1           \\
0    &\mbox{otherwise},                   
\end{array}\right.
\end{myeq}
for some finite abelian group $G$. We note that the multiplication is commutative.

\begin{prop}\label{quadratic}
For any field $k$, $H^*(M;k)$ is a quadratic algebra. 
\end{prop}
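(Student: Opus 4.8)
The plan is to reduce Proposition \ref{quadratic} to Proposition \ref{bil-quad} by verifying that $H^*(M;k)$ is a form algebra whose bilinear form contains a hyperbolic plane of the appropriate type. First I would identify the structure: set $V = H^n(M;k)\oplus H^{n+1}(M;k)$ and $z$ a generator of $H^{2n+1}(M;k)\cong k$, with $\phi\colon V\otimes V\to k$ the cup product pairing landing in $H^{2n+1}(M;k)$, read off via the chosen generator $z$. Since $M$ is $(n-1)$-connected of dimension $2n+1$, the only possibly nontrivial products in positive degree are $H^n\cdot H^{n+1}\to H^{2n+1}$ (and its graded-symmetric partner $H^{n+1}\cdot H^n$), because $H^n\cdot H^n$ lands in $H^{2n}=0$ and $H^{n+1}\cdot H^{n+1}$ in $H^{2n+2}=0$; this shows $H^*(M;k)$ is exactly a form algebra in the sense defined above, with $m=2n+1$ and the filtration $k\subset k\oplus V\subset k\oplus V\oplus k\{z\}$.

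Next I would check that $\phi$ contains a copy of $H^s$ or $H^{a.s}$. By Poincar\'e duality with field coefficients, the pairing $H^n(M;k)\otimes H^{n+1}(M;k)\to H^{2n+1}(M;k)\cong k$ is a perfect pairing of finite-dimensional $k$-vector spaces (note $\dim_k H^n(M;k)=\dim_k H^{n+1}(M;k)$, which is $r$ when $k$ has characteristic zero or coprime to $|G|$, and $r+\dim_k(G\otimes k)$ otherwise — in all cases the two spaces have equal dimension and the pairing is nondegenerate). Choose a nonzero $x\in H^n(M;k)$; by perfectness there is $y\in H^{n+1}(M;k)$ with $\phi(x,y)=1$. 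Then on the $2$-dimensional graded subspace $k\{x\}\oplus k\{y\}$ the form $\phi$ has matrix $\begin{bmatrix} 0 & 1\\ \pm 1 & 0\end{bmatrix}$ (the sign being $(-1)^{n(n+1)}=+1$ since $n(n+1)$ is even, so in fact $\phi(y,x)=\phi(x,y)=1$), which is precisely $H^s\subset V$. Hence the hypothesis of Proposition \ref{bil-quad} is satisfied.

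Finally, I would invoke Proposition \ref{bil-quad} directly: it gives that $A=H^*(M;k)$, being the form algebra associated to $(V,\phi)$ with $H^s\subset V$, is isomorphic to the quadratic algebra $A(V,R)$ where $R=\Ker(\phi)$, completing the proof. I expect essentially no obstacle here beyond bookkeeping; the one point that needs a line of care is confirming that the cup product pairing on $H^*(M;k)$ is a perfect pairing of the relevant degrees for \emph{every} field $k$ including those of characteristic dividing $|G|$ — this follows from Poincar\'e duality for closed oriented manifolds with field coefficients together with the universal coefficient theorem, so it is routine, and the graded-symmetry sign computation is trivial since $n(n+1)$ is always even.
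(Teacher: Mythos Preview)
Your proof is correct and follows essentially the same route as the paper: recognize $H^*(M;k)$ as a form algebra on $V=H^n(M;k)\oplus H^{n+1}(M;k)$, use Poincar\'e duality to exhibit a hyperbolic plane $H^s\subset V$ (your sign check $(-1)^{n(n+1)}=1$ is exactly the reason the paper gets $H^s$ rather than $H^{a.s}$), and invoke Proposition~\ref{bil-quad}. One small caveat: when you ``choose a nonzero $x\in H^n(M;k)$'' you are tacitly assuming $s\neq 0$; the paper flags this with the phrase ``in the case $s\neq 0$'', and when $s=0$ the algebra is $k[z]/(z^2)$, which is trivially quadratic---you should add that one-line remark.
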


\begin{proof}
With $k$ coefficients we have 
$$H^i(M;k)=\left \{\begin{array}{rl} 
                k &\mbox{if}~i=0,2n+1 \\
                k^s&\mbox{if}~i=n,n+1  \\
0    &\mbox{otherwise},                   
\end{array}\right.$$  
where $s$ equals $r$ plus the dimension of $G\otimes \Z/p$ where $p=\mathit{char}(k)$. 

We note that $H^\ast(M;k)$ is a form algebra with $V= H^n(M;k)\oplus H^{n+1}(M;k)$ and some choice of generator $z$ of $H^{2n+1}(M;k) \cong k$. Poincar\'{e} duality implies the multiplication $$H^n(M;k)\otimes H^{n+1}(M;k) \to    H^{2n+1}(M;k) \cong k$$
actually writes $H^{n+1}(M;k)$ as the dual vector space of $H^n(M;k)$. In the case $s\neq 0$, write down a basis $(v_1,\cdots, v_s)$ for $H^n(M;k)$ and a dual basis $(w_1,\cdots, w_s)$  for $H^{n+1}(M;k)$. The subspace $kv_1 + kw_1$ in $V$ induces an inclusion $H^s\to V$. Now the result follows from Proposition \ref{bil-quad}.   
\end{proof}

\begin{notation}
We fix a choice of graded modules $V_M= \Z^r \oplus \Z^r \subset    H_n(M)\oplus H_{n+1}(M)$ and $W_M=  \Z^r \oplus \Z^r \subset H^n(M)\oplus H^{n+1}(M)$ such that $W_M \stackrel{\cong}{\to} \Hom(V_M,\Z)$ in the universal coefficient theorem. We fix an orientation class $[M]\in H_{n+1}(M)$ and note that the multiplication in $H^\ast(M)$ induces a bilinear form $\Phi_M : W_M\otimes W_M \to \Z$ which is non-degenerate by Poincar\'{e} duality. Let $R_M = \Ker(\Phi_M)$. 
\end{notation}

With this notation we note from the proofs of Propostion \ref{quadratic} and Proposition \ref{bil-quad} that 

\begin{cor} \label{quad-man}
If $\mathit{char}(k)$ does not divide $|G|$, $H^\ast(M;k) \cong A(W_M\otimes k, R_M\otimes k)$. 
\end{cor}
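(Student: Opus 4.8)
The plan is to combine the structural result of Proposition \ref{quadratic} (that $H^\ast(M;k)$ is a quadratic algebra) with a careful identification of the generating vector space and the relation module over the field $k$, using Proposition \ref{freemod}(3) to reduce the mod-$p$ statement to the integral statement. First I would observe that when $\mathit{char}(k) = p$ does not divide $|G|$, the finite group $G$ satisfies $G \otimes \Z/p = 0$, so the integer $s$ appearing in the proof of Proposition \ref{quadratic} equals $r$, and the natural map $W_M \otimes k \to H^\ast(M;k)$ restricted to degrees $n$ and $n+1$ is an isomorphism onto $H^n(M;k) \oplus H^{n+1}(M;k)$. In other words, $V = H^n(M;k)\oplus H^{n+1}(M;k) \cong W_M \otimes k$ as graded $k$-vector spaces, compatibly with the bilinear forms: the form $\phi$ on $V$ coming from cup product into $H^{2n+1}(M;k)$ is precisely $\Phi_M \otimes k$ once we fix the generator $z$ of $H^{2n+1}(M;k)$ to be the reduction of the one dual to $[M]$.

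Next I would invoke Proposition \ref{bil-quad} exactly as in the proof of Proposition \ref{quadratic}: since $r \geq 1$ (this is forced by the hypothesis that $M$ is an actual $(2n+1)$-manifold with the stated homology and $r$ the free rank — though one should double-check whether the statement implicitly needs $r\geq 1$; if $r=0$ then $H^\ast(M;k)$ with $p\nmid|G|$ collapses to the cohomology of $S^{2n+1}$, for which $V=0$ and the quadratic algebra $A(0,0)=k[\,]$ trivially agrees, so the corollary still holds), the subspace spanned by a dual pair $v_1, w_1$ gives an embedded hyperbolic plane $H^s \subset V$, and Proposition \ref{bil-quad} yields $H^\ast(M;k) \cong A(V, \Ker\phi)$. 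Finally I would identify $\Ker(\phi)$ inside $V\otimes_k V$ with $R_M \otimes_k k$: since $\Phi_M : W_M \otimes W_M \to \Z$ is non-degenerate and $W_M$ is free, $R_M = \Ker(\Phi_M)$ is a direct summand of $W_M \otimes_\Z W_M$, so $R_M \otimes_\Z k$ injects into $(W_M \otimes_\Z W_M)\otimes_\Z k = (W_M\otimes k)\otimes_k (W_M\otimes k) = V\otimes_k V$ with image exactly the kernel of the induced form $\Phi_M \otimes k = \phi$. Combining these identifications gives $H^\ast(M;k) \cong A(W_M\otimes k, R_M\otimes k)$, which is the assertion.

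The one genuinely delicate point — and the step I expect to require the most care — is the commutation of $\Ker$ with $\otimes_\Z k$: in general tensoring is not left exact, so $\Ker(\Phi_M)\otimes k$ need not equal $\Ker(\Phi_M \otimes k)$. This is rescued precisely by non-degeneracy of $\Phi_M$: it forces the short exact sequence $0 \to R_M \to W_M\otimes W_M \to \mathrm{im}(\Phi_M) \to \cdots$ to split after a suitable identification, or more directly, one uses that $R_M$ is a saturated (pure) submodule of the free module $W_M\otimes W_M$ because the quotient embeds (via $\Phi_M$, after correcting by the pairing $W_M\cong \Hom(V_M,\Z)$) into a torsion-free module; hence $\mathrm{Tor}_1^\Z(W_M\otimes W_M / R_M, k) = 0$ and the sequence stays exact after $\otimes k$. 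I would spell this out, noting that $p \nmid |G|$ is not even needed for this particular step (it is only needed to get $V \cong W_M \otimes k$), and cite the proofs of Proposition \ref{quadratic} and Proposition \ref{bil-quad} for the quadraticity itself, as the Notation paragraph already signals.
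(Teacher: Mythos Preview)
Your proposal is correct and follows essentially the same route as the paper, which simply gestures at the proofs of Proposition~\ref{quadratic} and Proposition~\ref{bil-quad}: identify $V$ with $W_M\otimes k$ (using $p\nmid|G|$), identify $\phi$ with $\Phi_M\otimes k$, apply Proposition~\ref{bil-quad}, and match $\Ker\phi$ with $R_M\otimes k$. Your explicit verification that $R_M\otimes k = \Ker(\Phi_M\otimes k)$ via purity of $R_M$ (equivalently, via surjectivity of $\Phi_M$ onto $\Z$, which is immediate from $\Phi_M(w_1,w_1')=1$) is a detail the paper leaves implicit, and it is good that you spell it out.

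One small correction to your parenthetical: the $r=0$ case does \emph{not} trivially agree. If $r=0$ and $p\nmid|G|$ then $W_M=0$, so $A(W_M\otimes k,R_M\otimes k)=A(0,0)=k$, whereas $H^\ast(M;k)\cong H^\ast(S^{2n+1};k)$ is $k\oplus k\{z\}$ with $|z|=2n+1$. These are not isomorphic, so the corollary as literally stated requires $r\geq 1$ (which is indeed assumed in the applications in Sections~\ref{conn} and~\ref{loopdectors}). Your instinct to flag this was right; only your resolution was off.
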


The diagonal map on the chain level induces $\Delta:H_*(M)\to H_*(M\times M)$. This does not  induce a coalgebra structure on $H_\ast (M)$ as the  target is not quite $H_*(M)\otimes H_*(M)$. However, if we change coefficients so that the homology groups are torsion free, then $H_*(M)$ is a coalgebra. We list the primes $p$ for which  $G\otimes \Z/p \neq 0$ as $\Sigma_M = \{p_1,\cdots, p_k\}$ and write $D_M=\Z[\frac{1}{p_1},\cdots, \frac{1}{p_k}] \subset \Q$. Then, we have that $H_\ast (M;D_M)$ is torsion free, so that $H_\ast(M;D_M)$ is a coalgebra. We write $$\rho_M\subset V_M\otimes V_M\cong \Hom(W_M\otimes W_M,\Z)$$ 
as those linear maps which are zero on $R_M$. With these notations we observe    
\begin{prop}\label{quad-coalg}
As a coalgebra over the ring $D_M$ we have the indentification,  
$$H_*(M;D_M)\cong C(V_M, \rho_M).$$
\end{prop}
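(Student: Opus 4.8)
The plan is to deduce the coalgebra identification from the already-established algebra identification (Corollary \ref{quad-man}) by dualizing. The key point is that over the PID $D_M$ all the relevant homology and cohomology modules are finitely generated free, so linear duality is exact and well-behaved, and a coalgebra structure on a finitely generated free module is equivalent, by dualizing, to an algebra structure on the dual module. Concretely, $H_*(M;D_M)$ and $H^*(M;D_M)$ are dual to each other via the universal coefficient theorem (with no $\mathrm{Ext}$ terms, since everything is free over $D_M$), and the diagonal-induced comultiplication on $H_*(M;D_M)$ is the linear dual of the cup product on $H^*(M;D_M)$.

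First I would record that $H_*(M;D_M)$ is a coaugmented, conilpotent coalgebra which is a finitely generated free $D_M$-module, concentrated in degrees $0, n, n+1, 2n+1$, and that its linear dual is $H^*(M;D_M)$ as an algebra. Next, I would invoke Corollary \ref{quad-man} in the form $H^*(M;D_M)\cong A(W_M, R_M)$ — strictly, Corollary \ref{quad-man} is stated over a field, but Proposition \ref{freemod}(3) together with the fact that $A(W_M,R_M)$ is free over $D_M$ lets one lift it to $D_M$ (the relation $R_M$ satisfies the hypothesis of Proposition \ref{bil-quad}/\ref{freemod} because $\Phi_M$ contains a hyperbolic summand $H^s$ by Poincaré duality, exactly as in the proof of Proposition \ref{quadratic}). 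Then I would apply the definition of the Koszul dual coalgebra, or more elementarily the observation in the excerpt that $(A^!)^{(n)}\cong s^n((A^{\text{!`}})^*)^{(n)}$, to conclude that the linear dual of the quadratic algebra $A(W_M,R_M)$ is precisely the quadratic coalgebra $C(V_M,\rho_M)$, where $V_M = W_M^*$ and $\rho_M\subset V_M\otimes V_M$ is the annihilator of $R_M$ — which is exactly how $\rho_M$ was defined just before the proposition.

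The remaining step is to check that the comultiplication $\Delta$ coming from the geometric diagonal agrees with the quadratic comultiplication of $C(V_M,\rho_M)$ under this identification. For degree reasons this is almost automatic: on $H_0$ the coalgebra is coaugmented in the standard way; the only interesting coproduct is on the top class in degree $2n+1$, where $\bar\Delta$ lands in $(H_n\oplus H_{n+1})^{\otimes 2}$ and is, by the compatibility of cup and cap products (i.e. $\langle\Delta x, \alpha\otimes\beta\rangle = \langle x,\alpha\cup\beta\rangle$), the transpose of the cup-product pairing $W_M\otimes W_M\to D_M$, hence given by the "copairing" dual to $\Phi_M$; there are no higher coproducts because there is nothing in the appropriate degrees. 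This matches the maximal subcoalgebra description of $C(V_M,\rho_M)$: its weight-$2$ part is forced to be the full $\rho_M$, and weight $\geq 3$ vanishes for the same degree reasons that made $A(W_M,R_M)$ have nothing in weight $\geq 3$ in the proof of Proposition \ref{bil-quad}.

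I expect the main obstacle to be purely bookkeeping rather than conceptual: making the dualization precise over $D_M$ rather than a field — i.e. justifying that Corollary \ref{quad-man} upgrades from field coefficients to $D_M$ coefficients, and that $C(V_M,\rho_M)$ is itself a free $D_M$-module so that $C(V_M,\rho_M)^* \cong A(W_M,R_M)$ canonically and the double-dual identifications are the obvious ones. This is exactly the kind of freeness input supplied by Proposition \ref{freemod} (and the remark that the Koszul dual algebra and coalgebra are linear dual up to suspension), so once those are cited the proof is short. I would therefore structure the written proof as: (1) everything is f.g. free over $D_M$, so $\Delta$-coalgebra $=$ dual of cup-algebra; (2) cup-algebra $= A(W_M,R_M)$ over $D_M$ by Corollary \ref{quad-man} plus freeness; (3) dualizing a quadratic algebra gives the quadratic coalgebra on the dual data, and $\rho_M$ is by definition $R_M^\perp$; (4) conclude $H_*(M;D_M)\cong C(V_M,\rho_M)$.
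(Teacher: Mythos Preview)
Your proposal is correct and follows essentially the same approach as the paper: dualize the cup-product algebra structure on cohomology to obtain the diagonal coalgebra structure on homology, identify the former with $A(W_M,R_M)$, and recognize the linear dual as $C(V_M,\rho_M)$. The paper's own proof is in fact much terser (two sentences), taking for granted both the $D_M$-level identification $H^*(M;D_M)\cong A(W_M,R_M)$ and the duality between $A(W_M,R_M)$ and $C(V_M,\rho_M)$; your extra bookkeeping (invoking Proposition~\ref{freemod} to lift Corollary~\ref{quad-man} from fields to $D_M$, and checking the weight-$2$ coproduct explicitly) is not wrong, just more careful than what the paper writes.
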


\begin{proof}
As the homology with $D_M$-coefficients is torsion free, the coalgebra structure  is dual to the ring structure on cohomology, and hence is the linear dual of $A(W_M,R_M)$. It follows that this is the quadratic coalgebra $C(V_M, \rho_M)$.
\end{proof}

\begin{notation}\label{bas-not}
Note that the bilinear form $\Phi_M$ writes the degree $n$ part of $W_M$ as the dual of the degree $n+1$ part of $W_M$. We write down a basis $w_1,\cdots, w_r$ of the degree $n$ part and the dual basis $w_1',\cdots, w_r'$ of the degree $n+1$ part. We note that $R_M$ is spanned by 
 $$\{w_i\otimes w_j, ~ w_i'\otimes w_j'~ \mbox{for all}~ i,j,~w_i \otimes w_i' - w_i' \otimes w_i~\mbox{for all}~i  , w_i\otimes w_j',~w_i'\otimes  w_j~\mbox{for}~i\neq j\}.$$   
We write the basis of $V_M$ dual to the above basis as $\{v_1\cdots, v_r, v_1',\cdots,v_r'\}$. The module $\rho_M\subset V_M\otimes V_M$ is one dimensional and we may read off a generator of it from the expression above as $\sigma_M=\sum_i v_i\otimes v_i'-v_i'\otimes v_i$.
\end{notation}
 We now apply Proposition \ref{Kos2} to deduce

\begin{prop}\label{man-Kos}
Let $k$ be a quotient field of $D_M$.  Then, the quadratic algebra $A(V_M\otimes k,\rho_M\otimes k)$  is Koszul.
\end{prop}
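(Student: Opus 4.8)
The plan is to apply Proposition \ref{Kos2} directly to the quadratic algebra $A(V_M\otimes k,\rho_M\otimes k)$, so the work consists entirely of checking that $\rho_M\otimes k$ is one-dimensional and that its generator has the leading-monomial form required by that proposition. From Notation \ref{bas-not} we know $\rho_M$ is spanned by $\sigma_M=\sum_i v_i\otimes v_i'-v_i'\otimes v_i$, and since $k$ is a quotient field of $D_M$ (so in particular $\rho_M\otimes k$ is still one-dimensional with generator the image of $\sigma_M$), it suffices to exhibit an ordering of the basis $\{v_1,\dots,v_r,v_1',\dots,v_r'\}$ of $V_M\otimes k$ with respect to which $\sigma_M$ has the shape $\mathpzc{r}=v_iv_j+\sum_{k<i\text{ or }k=i,\,l<j}a_{k,l}v_kv_l$ with $i\neq j$.

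First I would fix the order $v_1<v_1'<v_2<v_2'<\cdots<v_r<v_r'$ (or any order with $v_i<v_i'$ and with the pair $(v_r,v_r')$ largest). Then the lexicographically largest degree-$2$ monomial occurring in $\sigma_M=\sum_i(v_i\otimes v_i'-v_i'\otimes v_i)$ is $v_r\otimes v_r'$: indeed each remaining term $v_i\otimes v_i'$ or $v_i'\otimes v_i$ with $i<r$ begins with a letter strictly smaller than $v_r$, and the term $v_r'\otimes v_r$ begins with $v_r'>v_r$ but — wait, that monomial is lexicographically larger. So instead I would take the order with $v_i'<v_i$ and the pair $(v_r',v_r)$ largest, say $v_1'<v_1<v_2'<v_2<\cdots<v_r'<v_r$; then $v_r'\otimes v_r$ is the unique leading monomial, it has $i\neq j$ (the two letters $v_r'$ and $v_r$ are distinct basis vectors), and all other monomials in $\sigma_M$ are lexicographically smaller, hence of the form $v_k v_l$ with $k<r'$, i.e. $k<i$ in the sense of Proposition \ref{Kos2}. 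Thus $\sigma_M$ (up to sign, which is harmless over a field) is exactly of the form prescribed there.

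With this, Proposition \ref{Kos2} applies verbatim and yields that $A(V_M\otimes k,\rho_M\otimes k)$ is Koszul, completing the proof.

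I do not anticipate a genuine obstacle here: the only point requiring care is the bookkeeping of the lexicographic order so that the leading monomial has two \emph{distinct} letters (which is what forces $i\neq j$ and is exactly why the ordering must interleave the $v_i$ and $v_i'$ rather than, say, listing all $v_i$ before all $v_j'$), together with the observation that passing from $D_M$ to a quotient field $k$ preserves one-dimensionality of $\rho_M$ by Proposition \ref{freemod}(3) applied to the relevant localization. Everything else is an immediate citation.
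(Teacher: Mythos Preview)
Your proposal is correct and matches the paper's approach, which simply invokes Proposition~\ref{Kos2} without spelling out an explicit ordering. One small inaccuracy in your commentary: interleaving is not actually necessary, since every monomial appearing in $\sigma_M=\sum_i(v_i\otimes v_i'-v_i'\otimes v_i)$ already involves two distinct basis letters, so under \emph{any} total order on $\{v_1,\dots,v_r,v_1',\dots,v_r'\}$ the leading monomial automatically satisfies $i\neq j$ (and the one-dimensionality of $\rho_M\otimes k$ is immediate from $\rho_M$ being free of rank one, rather than from Proposition~\ref{freemod}(3)).
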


\section{Computation of homotopy groups in the torsion free case}\label{conn}

In this section we compute the homotopy groups of any closed $(n-1)$-connected $(2n+1)$-manifold $M$ with homology as in \eqref{Mhom} after inverting the primes in $\Sigma_M$, that is those primes $p$ for which $G\otimes \Z/p \neq 0$. In this case we are able to execute the method in \cite{BaBa15_unpub} and obtain an expression of the homotopy groups as a direct sum of the homotopy groups of spheres. We directly observe that when $r=0$ the manifold $M$ is homotopy equivalent to $S^{2n+1}$ after inverting the primes in $\Sigma_M$, so that we assume that $r\geq 1$ below. 

The technique used in computing homotopy groups starts with computation of the homology of the loop space $\Omega M$ as an associative algebra with respect to the Pontrjagin product. We express $H_*(\Omega M)$ as the universal enveloping algebra of a certain Lie algebra $\LL_r^u(M)$. The relevant Lie algebra possesses a countable basis $l_1,l_2,\ldots$ which correspond to mapping spheres into $M$ via iterated Whitehead products. We apply the Poincar\'{e}-Birkhoff-Witt Theorem (over the ring $D_M$) to finish off the computation. 

We start by computing the Hurewicz homomorphism $\pi_k M \stackrel{h}{\to} H_k M$ for $(n-1)$-connected $2n+1$-manifolds. For $k<n-1$, we have $\pi_k M =0$ and by the Hurewicz theorem we have that $\pi_n M \cong H_n M$. In degree $n+1$, we prove 
\begin{prop} \label{Mhur} 
The map $\pi_{n+1} M \to H_{n+1}M$ is surjective. 
\end{prop}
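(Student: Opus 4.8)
The plan is to analyze the CW structure of $M$ in low dimensions and use a cell-by-cell argument. Since $M$ is $(n-1)$-connected with $H_n(M) = \Z^r \oplus G$ and $H_{n+1}(M) = \Z^r$, a minimal CW model of $M$ begins with a wedge of $n$-spheres (one for each free generator of $H_n$) together with Moore-space cells realizing $G$ in degree $n$, followed by $(n+1)$-cells. The key point is that any $(n+1)$-cell of $M$ either (i) is attached trivially, hence contributes a free summand to $H_{n+1}$ detected by an actual $S^{n+1}\hookrightarrow M$, or (ii) is attached nontrivially to the $n$-skeleton. In case (ii) the attaching map lands in $\pi_n$ of a wedge of $n$-spheres (plus the mod-$p$ Moore cells), and the resulting relations only kill elements of $H_n$ or introduce torsion in $H_n$ — they cannot obstruct surjectivity onto $H_{n+1}$ because the image of $H_{n+1}$ of the $(n+1)$-skeleton in $H_{n+1}(M)$ is all of $\Z^r$ once we pass to the full complex (attaching higher cells can only kill homology in degree $n+1$, but $H_{n+1}(M)=\Z^r$ survives).

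Concretely, I would argue as follows. Let $M^{(n+1)}$ be the $(n+1)$-skeleton. By cellular approximation $\pi_{n+1}(M) = \pi_{n+1}(M^{(n+2)})$, and inclusion $M^{(n+1)}\hookrightarrow M$ is surjective on $\pi_{n+1}$ up to the image of the attaching maps of $(n+2)$-cells; but those attaching maps live in $\pi_{n+1}(M^{(n+1)})$, so $\pi_{n+1}(M)$ is a quotient of $\pi_{n+1}(M^{(n+1)})$, compatibly with the Hurewicz map. Since $H_{n+1}(M^{(n+1)}) \twoheadrightarrow H_{n+1}(M)$ (top-dimensional homology of a skeleton surjects), it suffices to prove the Hurewicz map $\pi_{n+1}(M^{(n+1)}) \to H_{n+1}(M^{(n+1)})$ is surjective. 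Now $M^{(n+1)}$ is obtained from an $n$-dimensional complex $K$ (a wedge of $S^n$'s and $\bmod p$ Moore spaces, with $H_n(K) = \Z^r\oplus G$) by attaching $(n+1)$-cells. Each free generator of $H_{n+1}(M^{(n+1)}) = \Z^r$ is represented by a cellular cycle, i.e. a sum of $(n+1)$-cells with zero boundary; such a sum is precisely a map $S^{n+1}\to M^{(n+1)}$ (collapse the $n$-skeleton on the relevant subcomplex, or directly build the map on the union of those cells since their boundaries cancel), which hits the given generator under Hurewicz.

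The main obstacle will be making the last step rigorous: a priori a cellular $(n+1)$-cycle is a formal sum of cells whose algebraic boundary vanishes, and one must promote this to a genuine map from $S^{n+1}$. The clean way is to observe that the $(n+1)$-cycle lies in $H_{n+1}(M^{(n+1)}, M^{(n-1)}) \cong H_{n+1}(M^{(n+1)}/M^{(n-1)})$ and that the cycle condition means it is in the image of $H_{n+1}(M^{(n+1)})$, combined with the fact that $M^{(n+1)}/M^{(n-1)}$ is $(n-1)$-connected so its own Hurewicz map in degree $n+1$ is surjective onto $H_{n+1}$ modulo image of $H_{n+2}$ of nothing (there are no such cells), hence an isomorphism; then lift through the surjection $\pi_{n+1}(M^{(n+1)}) \to \pi_{n+1}(M^{(n+1)}/M^{(n-1)})$, which is surjective because $M^{(n-1)}$ is a point (as $M$ is $(n-1)$-connected we may take $M^{(n-1)}=\ast$). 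With $M^{(n-1)}=\ast$ the whole argument collapses: $M^{(n+1)}$ is itself $(n-1)$-connected, and we only need the standard fact that for an $(n-1)$-connected complex of dimension $\le n+1$ the Hurewicz map $\pi_{n+1}\to H_{n+1}$ is surjective, which follows from the relative Hurewicz theorem applied to $(M^{(n+1)}, K)$ where $K$ is $n$-dimensional and $(n-1)$-connected: $\pi_{n+1}(M^{(n+1)},K)\to H_{n+1}(M^{(n+1)},K)$ is an isomorphism, $H_{n+1}(M^{(n+1)},K) = H_{n+1}(M^{(n+1)})$ since $H_{n+1}(K)=0=H_n(M^{(n+1)},K)$'s relevant part, and $\pi_{n+1}(M^{(n+1)},K)\to \pi_{n+1}(M^{(n+1)})$ is onto because $\pi_n(K)\to\pi_n(M^{(n+1)})$ is onto (cellular, no new $n$-cells). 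Chasing these through gives the claim.
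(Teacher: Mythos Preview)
Your overall strategy---reduce to the $(n+1)$-skeleton and apply relative Hurewicz to the pair $(M^{(n+1)},K)$ with $K=M^{(n)}$---is sound and, once corrected, gives a more elementary proof than the paper's. The paper instead builds a map $\tau\colon Y=\vee_r S^n\vee M(G,n)\to M$ that is an \emph{isomorphism} on $\pi_n$, invokes Blakers--Massey to identify $\pi_{n+1}(M,Y)\cong\pi_{n+1}(C\tau)$, and then does the diagram chase. The advantage of the paper's choice is that $\pi_n(Y)\to\pi_n(M)$ is injective, so $\pi_{n+1}(M)\to\pi_{n+1}(M,Y)$ is genuinely surjective; your $K$ does not enjoy this, and that is exactly where your write-up goes wrong.

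Two concrete errors in your last paragraph. First, with $M^{(n-1)}=\ast$ the $n$-skeleton $K=M^{(n)}$ is a wedge of $n$-spheres only (the Moore pieces require $(n+1)$-cells), so $H_n(K)$ is free; this is harmless but contradicts what you wrote. Second, and this is the real gap: you claim $H_{n+1}(M^{(n+1)},K)=H_{n+1}(M^{(n+1)})$ and that ``$\pi_{n+1}(M^{(n+1)},K)\to\pi_{n+1}(M^{(n+1)})$ is onto''. Neither is correct. The long exact sequence gives only an \emph{injection} $H_{n+1}(M^{(n+1)})\hookrightarrow H_{n+1}(M^{(n+1)},K)$ with cokernel $\ker\big(H_n(K)\to H_n(M^{(n+1)})\big)$, which is nonzero whenever $G\neq 0$; and in the homotopy long exact sequence the map goes the other way, $j_*\colon\pi_{n+1}(M^{(n+1)})\to\pi_{n+1}(M^{(n+1)},K)$, and it is \emph{not} surjective for the same reason ($\pi_n(K)\to\pi_n(M^{(n+1)})$ is onto but not injective).

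The fix is a cleaner diagram chase. In the commuting ladder
\[
\xymatrix{
\pi_{n+1}(M^{(n+1)}) \ar[r]^-{j_*}\ar[d]^{h} & \pi_{n+1}(M^{(n+1)},K) \ar[r]^-{\partial}\ar[d]^{\cong} & \pi_n(K)\ar[d]^{\cong}\\
H_{n+1}(M^{(n+1)}) \ar[r]^-{j'_*} & H_{n+1}(M^{(n+1)},K) \ar[r]^-{\partial'} & H_n(K)
}
\]
the middle and right verticals are isomorphisms (relative Hurewicz; $K$ a wedge of $n$-spheres), so $\mathrm{im}(j_*)=\ker(\partial)$ corresponds to $\ker(\partial')=\mathrm{im}(j'_*)$. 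Since $H_{n+1}(K)=0$, $j'_*$ is injective. Given $x\in H_{n+1}(M^{(n+1)})$, the element $j'_*(x)$ lifts to some $\alpha\in\pi_{n+1}(M^{(n+1)})$ via $j_*$, and injectivity of $j'_*$ forces $h(\alpha)=x$. This repairs your argument without Blakers--Massey.
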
 

\begin{proof} 
Since $M$ is $(n-1)$-connected and $\pi_n M \cong H_n M \cong \Z^r \oplus G$, we may form a map 
$$\tau: Y:= \vee_r S^n \vee M(G,n) \to M,$$
which induces an isomorphism on $\pi_n$. Here $M(G,n)$ is the Moore space in degree $n$ defined by $\tilde{H}_i (M)$ $=$ $G$ for $i=n$, and, $=$ $0$ for $i\neq n$. Let $C\tau$ denote the mapping cone of $\tau$. Note that $Y$ and $M$ are both $(n-1)$-connected, so from the Blakers-Massey Theorem \cite{BlM53} it follows that  $\pi_t(M,Y) \cong \pi_t(C\tau)$ for $t\leq 2n-1$.  The Hurewicz homomorphism induces the commutative diagram
$$\xymatrix{  \pi_{n+1}Y \ar[r] \ar[d] & \pi_{n+1}M \ar[r] \ar[d] & \pi_{n+1}(M,Y) \ar[r] \ar[d] & \pi_n Y \ar[r]^{\cong} \ar[d]^\cong & \pi_nM \ar[r] \ar[d]^\cong & 0 (=\pi_n(M,Y)) \\
H_{n+1}Y \ar[r] & H_{n+1}M  \ar[r]^\cong & H_{n+1}(C\tau)   \ar[r] & H_nY \ar[r]^\cong & H_nM \ar[r] & 0( = H_n(C\tau) ) }$$
whose rows are exact. From Hurewicz theorem we note that $\pi_{n+1}(M,Y) \to H_{n+1} (C\tau)$ is an isomorphism. Note also that $H_{n+1} Y =0$ from which it follows that $H_{n+1}(M) \to H_{n+1}(C\tau)$ is an isomorphism. As $\pi_{n+1}M\to \pi_{n+1}(M,Y)$ is surjective, it follows that $\pi_{n+1} M \to H_{n+1}M$ is surjective. 
\end{proof} 

\begin{notation}\label{map-gen}
Proposition \ref{Mhur} implies that the $v_i$ and $v_i'$ of Notation \ref{bas-not} are in the image of the Hurewicz map. We fix maps $\nu_i: S^n\to M$ and $\nu_i' : S^{n+1} \to M$ whose Hurewicz images are $v_i$ and $v_i'$ respectively. 
\end{notation}

Our next task is to compute the homology of the loop space. Recall that the homology of $M$ over the ring $D_M$ is given by the formula 
$$H_i(M)\cong\left \{\begin{array}{rl} 
                D_M &\mbox{if}~i=0,2n+1 \\
                D_M^r &\mbox{if}~i=n,n+1  \\
                0    &\mbox{otherwise}.                   
\end{array}\right.$$  
As in Notation \ref{bas-not}, we have a basis $v_1,\cdots, v_r$ of $H_n( M;D_M)$ and a basis $v_1',\cdots, v_r,$ of $H_{n+1}(M;D_M)$ so that $\rho_M$ is one-dimensional generated by $\sum_i v_i\otimes v_i' - v_i'\otimes v_i$. In $H^\ast(M;D_M)$ the only non-trivial products of positive dimensional classes are given by the intersection form. Therefore the module of indecomposables in $H^\ast(M;D_M)$ is given by $\A(M)= H^n(M;D_M)\oplus H^{n+1}(M;D_M)$. With respect to the basis $w_1,\cdots, w_r, w_1'\cdots, w_r'$ of $\A(M)$ and the fixed choice of orientation class $[M]$, we have 
$$\langle w_i w_j, [M]\rangle =0 = \langle w_i'w_j', [M]\rangle, ~~ \langle w_iw_j',[M]\rangle = \langle w_j'w_i, [M] \rangle = \delta_{ij}.$$ 

In \cite{BerBor15}, the homology of $\Omega M$ is computed for $(n-1)$-connected $M$ of dimension $\leq 3n-2$ with  $Rank( H^\ast (M ; \Z ) ) > 4$. Suppose $x_1,x_2,\cdots, x_s$ is a basis of the module of indecomposables $\A(M)$ over $\Q$ or some subring $R$ where this module is free and $c_{ij}=\langle x_ix_j,[M]\rangle$ for a fixed choice of $[M]$ . Consider the homology ring $H_\ast(\Omega M ; \Q)$ of the based loop space, equipped with the Pontrjagin product. This ring is freely generated as an associative algebra by classes $u_1 , \cdots, u_r $ whose homology suspensions are dual to the classes $x_1 , \cdots, x_r$ (in particular $|u_i | = |x_i | - 1$), modulo the single quadratic relation
$$\sum_{i,j}(-1)^{ |u_i |+1} c_ {ji} u_i u_j = 0,$$
The same argument works for any quotient field of $R$ where $R$ is a subring over which $\A(M)$ is free. In our case as above we choose $R=D_M$ and we have $u_1,\cdots, u_r, u_1',\cdots, u_r'$ as those whose homology suspensions are $v_1,\cdots, v_r, v_1',\cdots, v_r'$ respectively, then note that the expression above is precisely 
$$l_M:= \pm\sum_i u_i\otimes u_i' - u_i'\otimes u_i$$
where the sign $\pm$ is determined from the parity of $n$.  We note this result in Proposition below (cf. \cite[Theorem 1.1]{BerBor15}).
\begin{prop}\label{loopk}
For $k=\Q$ or a quotient field of $D_M$, there is an isomorphism of associative rings, 
$$H_*(\Omega M;k) \cong T_k(u_1,\cdots,u_r,u_1',\cdots.u_r' )/(l_M).$$ 
\end{prop}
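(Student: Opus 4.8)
The plan is to realise $M$, after inverting the primes in $\Sigma_M$, as a single $(2n+1)$-cell attached to a wedge of spheres and then to read the loop homology off the resulting Adams--Hilton model, the relevant acyclicity of that model being precisely the Koszulness of Proposition \ref{man-Kos}. Concretely: by Notation \ref{map-gen} there are maps $\nu_i\colon S^n\to M$ and $\nu_i'\colon S^{n+1}\to M$ hitting $v_i$ and $v_i'$ under the Hurewicz homomorphism, and their wedge $f\colon Z':=\bigvee_r S^n\vee\bigvee_r S^{n+1}\to M$ induces an isomorphism on $H_*(-;D_M)$ in degrees $\le n+1$. Hence the homotopy cofibre of $f$ has the $D_M$-homology of $S^{2n+1}$, so $M\simeq_{D_M}Z'\cup_\alpha e^{2n+1}$ for some $\alpha\colon S^{2n}\to Z'$. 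Since $Z'$ is a suspension, Bott--Samelson gives $H_*(\Omega Z';k)\cong T_k(u_1,\dots,u_r,u_1',\dots,u_r')$ with $|u_i|=n-1$, $|u_i'|=n$, and the Adams--Hilton model of $M$ is $\bigl(T_k(u_1,\dots,u_r,u_1',\dots,u_r',t),\,d\bigr)$ with $|t|=2n$, $du_i=du_i'=0$, and $dt=\theta(\alpha)$ the algebraic attaching map, an element of internal degree $2n-1$ of the subalgebra $T_k(u_i,u_i')$.

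I would then identify $dt$. For $n\ge 3$ the degree-$(2n-1)$ part of $T_k(u_i,u_i')$ is spanned by the weight-two monomials $u_iu_j'$ and $u_i'u_j$, and the standard dictionary relating the top-cell differential of an Adams--Hilton model to the cup products of $M$ shows that $\theta(\alpha)$ is the element dual to the intersection form. The relations $\langle w_iw_j',[M]\rangle=\langle w_j'w_i,[M]\rangle=\delta_{ij}$ and $\langle w_iw_j,[M]\rangle=\langle w_i'w_j',[M]\rangle=0$ recorded just above then force $\theta(\alpha)=c\,l_M$, where $l_M=\pm\sum_i(u_iu_i'-u_i'u_i)$: the overall sign encodes the desuspension and Koszul-sign conventions (hence the parity of $n$), and the unit $c\in k$ is absorbed into $t$ because $\Phi_M$ is unimodular over $D_M$ and so invertible in every quotient field. (For $n=2$ the degree-$3$ part of $T_k(u_i,u_i')$ also contains weight-three Lie monomials, so a priori $\theta(\alpha)$ could carry a cubic term; this can be removed by adjusting $t$, so the reduced model is again $\bigl(T_k(u_i,u_i',t),\,dt=l_M\bigr)$.)

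It remains to compute the homology of this model. There is an evident surjection onto $A(V_M\otimes k,\rho_M\otimes k)=T_k(u_i,u_i')/(l_M)$, and the content is that it is a quasi-isomorphism, i.e.\ no relation beyond $l_M$ survives in $H_*(\Omega M;k)$. This is where Koszulness is used: $H^*(M;k)\cong A(W_M\otimes k,R_M\otimes k)$ by Corollary \ref{quad-man}, this algebra is Koszul by Proposition \ref{man-Kos} together with the fact that $A(V,R)$ is Koszul iff $A(V^*,R^\perp)$ is, and for a quadratic algebra the acyclicity of the Koszul-type complex computed by this one-cell Adams--Hilton model is equivalent to Koszulness. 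Hence the homology is $A(V_M\otimes k,\rho_M\otimes k)$, which by Theorem \ref{Diamond} and Proposition \ref{freemod} equals $T_k(u_1,\dots,u_r,u_1',\dots,u_r')/(l_M)$ with $l_M$ the image of the generator $\sigma_M$ of $\rho_M$; this is the asserted isomorphism. When $\operatorname{Rank}H^*(M;\Z)>4$ and $\dim M\le 3n-2$ this entire argument is \cite[Theorem 1.1]{BerBor15}; the point is that the same computation runs for all $n\ge 2$ and $r\ge 1$, since the only structural input it needs --- Koszulness of $H^*(M;k)$ --- holds without those restrictions.

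The main obstacle is precisely this last step: proving that the one-cell Adams--Hilton model carries no homology beyond the quadratic quotient, equivalently that the relation $l_M$ is inert in $T_k(u_i,u_i')$. Translating that inertness into the Koszulness of $H^*(M;k)$ already established --- via $\mathrm{Ext}_A(k,k)\cong A^!$ and the acyclicity of $A^{\text{!`}}\otimes_\kappa A$ --- is the technical heart; by comparison the cell-structure reduction, the Bott--Samelson identification of $H_*(\Omega Z';k)$, the sign bookkeeping relating $\theta(\alpha)$ to $l_M$, and the disposal of the possible cubic term when $n=2$ are routine.
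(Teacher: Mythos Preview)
Your approach is exactly the paper's: the paper does not prove this proposition independently but simply invokes \cite[Theorem~1.1]{BerBor15} (the Adams--Hilton/Koszulness computation you have spelled out) and remarks that ``the same argument works'' over any quotient field of $D_M$. You have supplied the details the paper omits, and you correctly flag the $n=2$ case, which falls outside the hypothesis $\dim M\le 3n-2$ of \cite{BerBor15} and which the paper passes over in silence.

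One small correction to your $n=2$ discussion: the cubic term in $\theta(\alpha)$ cannot be removed by ``adjusting $t$'', because the differential on the subalgebra $T_k(u_i,u_i')$ is identically zero, so replacing $t$ by $t+(\text{anything in }T_k(u_i,u_i'))$ leaves $dt$ unchanged. The correct manoeuvre is to adjust the degree-$n$ generators instead: for $n=2$ the Hurewicz image of $\alpha$ is a Lie element (the only Hilton--Milnor summands contributing nontrivially to $H_3(\Omega Z';\Z)$ are the $\Omega S^4$ factors coming from $[x_i,x_j']$ and from triple brackets $[[x_i,x_j],x_k]$, and each contributes its corresponding Lie bracket), so the cubic part $c$ lies in the free Lie algebra on the $u_i$ and hence can be written as $\sum_i[u_i,q_i]$ with each $q_i$ quadratic in the $u_j$. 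Replacing $u_i'$ by $u_i'-q_i$ then turns $l_M+c$ into the new $l_M$, and the model again has $dt=l_M$. With this fix your argument goes through for all $n\ge 2$.
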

\noindent This directly leads us to the following integral version which follows the same proof as \cite[Proposition 2.2]{BaBa16_unpub}.
\begin{prop}\label{manhomloop}
As associative rings,
 $$H_*(\Omega M;D_M) \cong T_{D_M}(u_1,\cdots,u_r,u_1',\cdots,u_r')/(l_M).$$ 
\end{prop}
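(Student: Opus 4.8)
The plan is to bootstrap from the field-coefficient computation of Proposition \ref{loopk} to the integral (i.e. $D_M$-coefficient) statement, exploiting the fact that $H_\ast(\Omega M; D_M)$ is torsion-free. First I would observe that since $M$ is $(n-1)$-connected, $\Omega M$ is $(n-2)$-connected, and the homology groups $H_\ast(\Omega M; D_M)$ are finitely generated $D_M$-modules in each degree. The key structural input is that these modules are in fact free over $D_M$: this follows because Proposition \ref{loopk} (applied to every quotient field $k$ of $D_M$, i.e. $\Q$ and each $\Z/p$ for $p \notin \Sigma_M$) computes the dimension of $H_\ast(\Omega M; k)$ as the dimension of a fixed quadratic algebra $T_k(u_1,\dots,u_r,u_1',\dots,u_r')/(l_M)$, and by Proposition \ref{freemod}(1) (the relation $l_M = \pm(\sum u_i u_i' - u_i' u_i)$ has leading monomial $u_1 u_1'$ with distinct factors, so the hypotheses of Proposition \ref{freemod} are met) this dimension is independent of $k$. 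A finitely generated $D_M$-module whose rank and mod-$p$ dimension agree for all relevant $p$ is free. Hence $H_\ast(\Omega M; D_M)$ is free over $D_M$ of the prescribed rank in each degree.

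Next I would construct the comparison map. Since the $u_i$ and $u_i'$ are homology suspensions of the classes $v_i, v_i'$, and by Notation \ref{map-gen} these come from maps $\nu_i : S^n \to M$, $\nu_i' : S^{n+1} \to M$, we get (adjointing and using the loop multiplication) an algebra map
$$
\Theta : T_{D_M}(u_1,\dots,u_r,u_1',\dots,u_r') \to H_\ast(\Omega M; D_M).
$$
One checks the relation $l_M$ maps to zero: the Pontrjagin product $H_n(\Omega M) \otimes H_{n+1}(\Omega M) \to H_{2n+1}(\Omega M)$ pairs the loop classes exactly as the quadratic relation dictates, because this is detected after tensoring with $\Q$, where it holds by Proposition \ref{loopk}, and $H_\ast(\Omega M; D_M)$ is torsion-free so $\Q$-detection suffices. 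This gives a factored algebra map
$$
\bar\Theta : T_{D_M}(u_1,\dots,u_r,u_1',\dots,u_r')/(l_M) \to H_\ast(\Omega M; D_M).
$$

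Finally I would show $\bar\Theta$ is an isomorphism by a rank count in each weight/homological degree. Both sides are free $D_M$-modules, finitely generated in each degree; by Proposition \ref{freemod} the source has an explicit $R$-irreducible monomial basis, and after tensoring with $\Q$ the map $\bar\Theta \otimes \Q$ is the isomorphism of Proposition \ref{loopk}. A map of free $D_M$-modules of finite rank that becomes an isomorphism after $\otimes \Q$ is injective with torsion cokernel; to upgrade to surjectivity I would tensor with $\Z/p$ for $p \notin \Sigma_M$ and invoke Proposition \ref{loopk} again (together with Proposition \ref{freemod}(3), which identifies the mod-$p$ reduction of the quadratic algebra with the quadratic algebra over $\Z/p$) to see $\bar\Theta \otimes \Z/p$ is an isomorphism, hence the cokernel has no $p$-torsion for any $p$ invertible in $D_M$ — but the cokernel is a $D_M$-module, so it vanishes. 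This is essentially the argument of \cite[Proposition 2.2]{BaBa16_unpub}. The main obstacle is the freeness/torsion-freeness bookkeeping: one must be careful that $H_\ast(\Omega M; D_M)$ really is torsion-free (which is where the choice of $D_M$ inverting exactly the primes in $\Sigma_M$ is used, so that the loop homology Serre spectral sequence or the James-type filtration has no torsion obstructions) before the $\Q$-detection and mod-$p$ arguments can be combined.
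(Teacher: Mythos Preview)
Your proposal is correct and matches the paper's intended argument: the paper does not give an explicit proof but refers to \cite[Proposition 2.2]{BaBa16_unpub}, and what you outline---construct the algebra map from the quadratic algebra, use Proposition \ref{freemod} to know the source is free with $R$-irreducible basis, use Proposition \ref{loopk} over $\Q$ and over every $\Z/p$ with $p\notin\Sigma_M$ together with the universal coefficient theorem to see that $H_\ast(\Omega M;D_M)$ is free of the same graded rank, and conclude the map is an isomorphism---is exactly that argument. One small remark: when you say ``a finitely generated $D_M$-module whose rank and mod-$p$ dimension agree for all relevant $p$ is free,'' note that $\dim_{\Z/p}H_n(\Omega M;\Z/p)$ is not literally $\dim_{\Z/p}(H_n(\Omega M;D_M)\otimes\Z/p)$ because of the Tor term from degree $n-1$; the correct statement (which you clearly have in mind) is that equality of $\dim_\Q H_n$ and $\dim_{\Z/p} H_n$ \emph{for all $n$ simultaneously} forces all the torsion to vanish, since the Tor contributions are nonnegative and must sum to zero.
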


We next use the computation of the homology of the loop space to split off the homotopy groups as a direct sum of homotopy groups of spheres. Note that the element $l_M = \sum_i u_i\otimes u_i' - u_i'\otimes u_i$ in $T_{D_M}(u_1,\cdots,u_r,u_1',\cdots,u_r')$ actually lies in the free Lie algebra generated by $u_i,u_i'$ which we denote by  $\Lie(u_1,\cdots,u_r,u_1',\cdots, u_r')$.  Consider the  Lie algebra $\LL(M)$ (over $D_M$) given by 
$$\frac{\Lie(u_1,\cdots,u_r,u_1',\cdots,u_r')}{(l_M)}$$
where $(l_M)$ denotes the Lie algebra ideal generated by $l_M$. Then $\LL(M)$ inherits a grading with $|u_i|$ $=$ $n-1$  and $|u_i'|$ $=$ $n$.  Denote by $\LL_w(M)$ the degree $w$ homogeneous elements of $\LL(M)$. From Proposition \ref{freemod} and Theorem \ref{Liebasis} we know that $\LL(M)$ is a free module and the Lyndon basis gives a basis of $\LL(M)$.

List the elements of the Lyndon basis in order as $l_1 < l_2 <\ldots$ and define the height of a basis element by $h_i= h(l_i)=w+1$ if $b(l_i) \in \LL_w(M)$. Then $h(l_i)\leq h(l_{i+1})$. Note that $b(l_i)$ represents an element of $\Lie(u_1,\cdots,u_r,u_1',\cdots, u_r')$ and is thus represented by an iterated Lie bracket of the $u_i$s and $u_j'$s. Use iterated Whitehead products of $\nu_i, \nu_i'$ to define maps $\lambda_i : S^{h_i}\rightarrow M$. 



\begin{theorem}\label{htpytors-free}
For primes $p$ that are not invertible in $D_M$ (that is, $p\notin \Sigma_M$), there is an isomorphism\footnote{Note that the right hand side is a finite direct sum for each $\pi_n(M)$.}
$$\pi_*(M) \cong \sum_{i\geq 1} \pi_* S^{h_i}$$ 
and the inclusion of each summand is given by $\lambda_i$.
\end{theorem}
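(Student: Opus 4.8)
The plan is to show that, after inverting the primes of $\Sigma_M$, the map built from the $\lambda_i$ by looping and multiplying in $\Omega M$ realizes $\Omega M$ as a \emph{weak infinite product} of the spaces $\Omega S^{h_i}$, and then to read off the homotopy groups.

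First I would assemble the $\lambda_i$ into one map. For each $N$ let $\mu_N\colon(\Omega M)^N\to\Omega M$ be the $N$-fold loop multiplication (well defined up to homotopy by associativity) and put
$$\Phi_N=\mu_N\circ(\Omega\lambda_1\times\cdots\times\Omega\lambda_N)\colon\ \prod_{i=1}^N\Omega S^{h_i}\longrightarrow\Omega M .$$
Since the multiplication on $\Omega M$ has a strict unit, the $\Phi_N$ are compatible with the inclusions $\prod_{i=1}^N\Omega S^{h_i}\hookrightarrow\prod_{i=1}^{N+1}\Omega S^{h_i}$ obtained by inserting the basepoint in the last factor, so they assemble into a map $\Phi\colon\prod^{w}_{i\ge1}\Omega S^{h_i}\to\Omega M$ out of the colimit of the finite sub-products. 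Both source and target are connected $H$-spaces, hence simple spaces, so it will suffice to prove that $\Phi$ is a homology isomorphism with $D_M$-coefficients.

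The core step is that computation. On the source, the James equivalence (the homology of $\Omega\Sigma X$ is the tensor algebra on $\widetilde H_\ast X$) identifies $H_\ast(\Omega S^{h_i};D_M)$ with the one-generator polynomial algebra $D_M[a_i]$, $|a_i|=h_i-1$; hence $H_\ast(\prod^{w}_{i\ge1}\Omega S^{h_i};D_M)\cong\bigotimes_{i\ge1}D_M[a_i]$, with $D_M$-basis the monomials $a_1^{k_1}\otimes a_2^{k_2}\otimes\cdots$. On the target, Proposition~\ref{manhomloop} and Proposition~\ref{univlie} give $H_\ast(\Omega M;D_M)\cong U(\LL(M))$; since $\LL(M)$ is $D_M$-free with basis $\{b(l_i)\}$ by Theorem~\ref{Liebasis} and $D_M$ is a PID, the Poincar\'e--Birkhoff--Witt theorem makes $U(\LL(M))$ $D_M$-free on the ordered monomials $b(l_1)^{k_1}b(l_2)^{k_2}\cdots$. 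Now each $\Omega\lambda_i$ is a loop map, so $(\Omega\lambda_i)_\ast$ is an algebra homomorphism, and by the construction of $\lambda_i$ as the iterated Whitehead product of the $\nu_j,\nu_j'$ corresponding to $b(l_i)$, together with the classical identification of adjoint Whitehead products with iterated Pontryagin commutators, one has $(\Omega\lambda_i)_\ast(a_i)=b(l_i)$. Because $\mu_{N\ast}$ is the iterated Pontryagin product, it follows that
$$\Phi_{N\ast}\bigl(a_1^{k_1}\otimes\cdots\otimes a_N^{k_N}\bigr)=b(l_1)^{k_1}\cdots b(l_N)^{k_N},$$
so $\Phi_\ast$ carries the evident $D_M$-basis of the source bijectively onto the PBW basis of $U(\LL(M))$, and is therefore an isomorphism.

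Finally, for $p\notin\Sigma_M$ the ring $\Z_{(p)}$ is a localization of $D_M$, so $\Phi$ is also an isomorphism on $H_\ast(-;\Z_{(p)})$; as both spaces are simple, the localization theory of nilpotent spaces shows that $\Phi$ becomes a homotopy equivalence after $p$-localization (equivalently, after inverting $\Sigma_M$). Hence $\pi_\ast(\Omega M)_{(p)}\cong\bigoplus_{i\ge1}\pi_\ast(\Omega S^{h_i})_{(p)}$, and applying $\pi_k(\Omega X)\cong\pi_{k+1}(X)$ to both sides yields $\pi_\ast(M)\cong\sum_{i\ge1}\pi_\ast S^{h_i}$ $p$-locally, with the $i$-th summand included by $\lambda_i$ — a finite sum in each degree since $h_i\to\infty$. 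The step I expect to be the main obstacle is pinning down $(\Omega\lambda_i)_\ast(a_i)=b(l_i)$: it requires matching, with correct signs, the iterated Whitehead product defining $\lambda_i$ with the iterated Pontryagin bracket $b(l_i)$, and it is exactly here that the ordinary (non-super) behaviour of $\LL(M)$ and its enveloping algebra — giving the unrestricted-exponent PBW basis that matches the tensor-algebra homology $D_M[a_i]$ of each $\Omega S^{h_i}$ factor — is what lets the comparison go through factor by factor.
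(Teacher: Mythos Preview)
Your outline matches the paper's proof exactly: assemble $\Phi$ from the $\Omega\lambda_i$ via loop multiplication into the weak product, identify $H_\ast(\Omega M;D_M)\cong U(\LL(M))$ and invoke PBW on the target, then conclude by the $p$-local Whitehead theorem for simple spaces. The only imprecision is the asserted equality $(\Omega\lambda_i)_\ast(a_i)=b(l_i)$: the Hurewicz image of an iterated Whitehead product is, up to a global sign, the iterated \emph{graded} Pontryagin commutator (this is the Samelson formula $\rho([a,b])=\pm(\rho(a)\rho(b)-(-1)^{|a||b|}\rho(b)\rho(a))$), whereas the Lyndon elements $b(l_i)$ and the PBW monomials you invoke are built from the \emph{ungraded} bracket, so the two sides are not literally equal and cannot be made so by a single sign. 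The paper does not try to force equality; it records instead the congruence $\rho(\lambda_i)\equiv b(l_i)$ modulo lower-order terms, so that passing to the associated graded $E_0 U(\LL(M))\cong D_M[b(l_1),b(l_2),\ldots]$ one also has $E_0 U(\LL(M))\cong D_M[\rho(\lambda_1),\rho(\lambda_2),\ldots]$, and hence the ordered monomials in the $\rho(\lambda_i)$ are a $D_M$-basis as well. That triangularity step is precisely the obstacle you flag, and it is the only place your write-up needs tightening.
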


\begin{proof}
Following \cite[Theorem 4.1]{BaBa15_unpub} write $S(t) = \prod_{i=1}^t \Omega S^{h_i}$. The maps $\Omega \lambda_i: \Omega S^{h_i} \rightarrow \Omega M$ for $i=1,\ldots,l$ can be multiplied using the $H$-space structure on $\Omega M$ to obtain a map from $S(l)$ to $\Omega M$. Use the model for $\Omega M$ given by the Moore loops so that the multiplication is strictly associative with a strict identity. Then the maps  $S(t) \rightarrow \Omega M$ and $S(t')\rightarrow \Omega M$ for $t\leq t'$ commute with the inclusion $S(t)\rightarrow S(t')$ by the basepoint on the last $t'-t$ factors.  Hence we obtain a map 
$$\Lambda: S := \mathit{hocolim}~ S(t) \rightarrow \Omega M$$   
We prove that  $\Lambda_*$ is an isomorphism on homology with $D_M$ coefficients. 

We know from Proposition \ref{manhomloop} that 
$$H_*(\Omega M;D_M) \cong T_{D_M}(u_1,\cdots,u_r,u_1',\cdots,u_r')/(l_M).$$
 Proposition \ref{univlie} implies that  this  is  the universal enveloping algebra of the Lie algebra $\LL(M)$. Hence, by the Poincar\'e-Birkhoff-Witt Theorem we have  
$$E_0H_*(\Omega M) \cong S(\LL(M))$$
where $E_0H_*(\Omega M)$ is the associated graded algebra of $H_*(\Omega M)$ with respect to the length filtration and $S(\LL(M))$ is the symmetric algebra on $\LL(M)$. 

The homology of $S$ is the algebra 
$$H_*(S;D_M) \cong T_{D_M} (c_{h_1 -1})\otimes T_{D_M}(c_{h_2-1})\otimes\cdots \cong D_M[c_{h_1-1},c_{h_2 -1},\cdots ]$$
where $c_{h_i -1}$ denotes the generator in $H_{h_i -1}(\Omega S^{h_i};D_M)$. Now each $c_{h_i-1}$ maps to the  Hurewicz image of $\Omega (\lambda_i)\in H_{h_i-1}(\Omega M;D_M)$.  Consider the composite 
$$\rho : \pi_n(M)\cong \pi_{n-1}(\Omega M) \stackrel{\mathit{Hur}}{\longrightarrow} H_{n-1}(\Omega M).$$
We know from \cite{Hil55}, Lemma 2.2 (also see \cite{Sam53}) that 
\begin{myeq}\label{hur}
\rho([a,b])=\pm (\rho(a)\rho(b) - (-1)^{|a||b|}\rho(b)\rho(a)).
\end{myeq}
 The map $\rho$ carries each $\nu_i$  to $u_i$ and $\nu_i'$ to $u_i'$. The element  $b(l_i)$ is mapped inside $H_*(\Omega M)$ to the element corresponding to the graded Lie algebra element (up to sign) by equation (\ref{hur}). Denote the Lie algebra element (ungraded) corresponding to $b(l_i)$ by the same notation. We readily discover that the difference of the graded and ungraded elements lie in  the algebra generated by terms of lower weight, i.e.,
\begin{myeq}
\label{rhobl}
\rho(b(l_i)) \equiv b(l_i)~~ \pmod{\mathit{lower~order~terms}}.
\end{myeq}
Therefore
$$E_0T_{D_M}(u_1,\cdots,u_r,u_1',\cdots,u_r')/(l_M) \cong D_M [b(l_1),b(l_2),\ldots] \cong  D_M [\rho(b(l_1)),\rho(b(l_2)),\ldots].$$
It follows that the monomials in $\rho(b(l_i))$, $i=1,2,\ldots$ form a basis of  $H_*(\Omega M) $. The map $\Lambda : S\to \Omega M$ maps $c_{h_i-1} \to \rho(b_i)$ and takes the product of elements in $D_M[c_{h_1-1},c_{h_2 -1},\ldots ]$ to the corresponding Pontrjagin product. It follows that $\Lambda_*$ is an isomorphism.

It follows that if $p\notin \Sigma_M$ then $\Lambda_*$ is an isomorphism with $\Z_{(p)}$-coefficients. Both $S$ and $\Omega M$ are $H$-spaces, and hence simple (that is $\pi_1$ is abelian and acts trivially on $\pi_n$ for $n\ge 2$). It follows that $\Lambda$ is a weak equivalence when localized at $p$. Hence  the result follows. 
\end{proof}

The proof of Theorem \ref{htpytors-free} implies a stronger result about the loop space of the manifold $M$. We denote the $D_M$-localization of $M$ by $M_\tau$. For a sequence of based spaces $Y_i$, we use the notation $\hat{\Pi}_{i\geq 0} Y_i$ for the homotopy colimit of finite products of $Y_i$.  

\begin{theorem} \label{loopdectors-free}
With notations as above, 
$$\Omega M_\tau \simeq \hat{\Pi}_{i\geq 0} \Omega S^{h_i}_\tau.$$
\end{theorem}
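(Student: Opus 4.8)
The plan is to extract the loop space decomposition directly from the homology equivalence $\Lambda : S \to \Omega M$ constructed in the proof of Theorem \ref{htpytors-free}. First I would recall that the target of Theorem \ref{loopdectors-free} is $\Omega M_\tau \simeq \hat{\Pi}_{i\geq 0} \Omega S^{h_i}_\tau$, and that $\hat{\Pi}_{i\geq 0}\Omega S^{h_i}_\tau$ is by definition the homotopy colimit $S = \mathrm{hocolim}\, S(t)$ of Theorem \ref{htpytors-free}, localized at $D_M$ (equivalently, at all primes outside $\Sigma_M$). So there is essentially nothing new to build: the map $\Lambda$ is already in hand, and the content of the theorem is that $\Lambda$ localized at $D_M$ is a homotopy equivalence rather than merely a homology isomorphism.

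The key steps, in order, would be: (1) observe that $\Lambda_\ast$ is an isomorphism on homology with $D_M$-coefficients — this is literally proven inside the proof of Theorem \ref{htpytors-free}, using Proposition \ref{manhomloop}, Proposition \ref{univlie}, the Poincar\'e--Birkhoff--Witt theorem, and the identification of $H_\ast(S;D_M)$ as the polynomial algebra $D_M[c_{h_1-1}, c_{h_2-1},\dots]$ mapping onto the associated graded $E_0 H_\ast(\Omega M; D_M) \cong S(\LL(M))$. (2) Note that $S$ is a homotopy colimit of products of loop spaces of simply connected spheres (here $h_i \geq n \geq 2$, so all factors are simply connected, hence $S$ is connected and simple), and $\Omega M$ is an $H$-space, hence also simple; both spaces have homology of finite type over $D_M$. (3) Invoke the $D_M$-local (equivalently, $p$-local for $p\notin\Sigma_M$) Whitehead theorem for simple spaces: a map of simple spaces inducing an isomorphism on $H_\ast(-;D_M)$ is a $D_M$-local homotopy equivalence, i.e. becomes a weak equivalence after localization at $D_M$. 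This is exactly the step carried out at the end of the proof of Theorem \ref{htpytors-free} to conclude $\Lambda$ is a $p$-local equivalence; here one simply assembles it over all $p\notin\Sigma_M$ at once, or equivalently observes that a map of simple spaces of finite type inducing an iso on $H_\ast(-;\Z[\tfrac{1}{p_1},\dots,\tfrac{1}{p_k}])$ is already a $D_M$-local equivalence. (4) Conclude $\Omega M_\tau = (\Omega M)_\tau \simeq S_\tau = \hat{\Pi}_{i\geq 0}\Omega S^{h_i}_\tau$, using that localization commutes with $\Omega$ (both $M$ and the spheres are simply connected, so there is no subtlety about $\Omega$ of a localization) and with homotopy colimits of the relevant filtered diagram.

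Honestly, there is no serious obstacle here — the theorem is a repackaging of what was already established. The only point requiring a little care is the passage from ``$\Lambda$ is a $p$-local equivalence for each $p\notin\Sigma_M$'' to ``$\Lambda$ is a $D_M$-local equivalence.'' The cleanest route is not to glue local equivalences but to apply the mod-$\CC$ / arithmetic-square form of the Whitehead theorem directly over the PID $D_M$: since $H_\ast(S;D_M)\to H_\ast(\Omega M;D_M)$ is an isomorphism, and $\Omega M$ and $S$ are simply connected (so nilpotent) spaces of finite $D_M$-type, the relative Hurewicz/Serre spectral sequence argument shows $\Lambda_\tau$ induces isomorphisms on all homotopy groups. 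One should also record explicitly that localization at $D_M$ commutes with the formation of $\hat{\Pi}$, which holds because each $S(t)$ is a finite product and localization commutes with finite products and with sequential homotopy colimits. I would spell these two bookkeeping points out in a sentence or two and otherwise refer back to the proof of Theorem \ref{htpytors-free}.
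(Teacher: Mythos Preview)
Your proposal is correct and matches the paper's approach exactly: the paper does not give a separate proof of Theorem~\ref{loopdectors-free} but simply states that it is implied by the proof of Theorem~\ref{htpytors-free}, which is precisely what you unpack. One small slip: in your final paragraph you write that $\Omega M$ and $S$ are ``simply connected,'' but for $n=2$ one has $\pi_1(\Omega M)=\pi_2(M)\cong\Z^r\oplus G$, so you should stick with ``simple'' (as you correctly said earlier), which is all the $D_M$-local Whitehead theorem needs.
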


We now compute the number of copies of $S^k$ in the expression of Theorem \ref{htpytors-free} from the rational cohomology groups of $M$. Let
$$q_M(t)= 1 - rt^n -rt^{n+1} + t^{2n+1}$$
Then $\frac{1}{q_M(t)}$ is the generating series for $\Omega M$  \cite[Theorem 3.5.1]{LoVa12}, from the fact that $H_*(\Omega M ; D_M)$ is Koszul as an associative algebra \cite{BerBor15}. Let
$$\eta_m := \mbox{coefficient of $t^m$ in }log(q_M(t)).$$
We may now repeat the proof of  \cite[Theorem 3.8]{BaBa15_unpub} to deduce 
\begin{theorem}\label{htpyformtors-free}
 The number of groups $\pi_s S^m\otimes D_M$ in $\pi_s(M)\otimes D_M$ is 
$$l_{m-1}= - \sum_{j|m-1} \mu(j)\frac{\eta_{(m-1)/j}}{j}$$
where $\mu$ is the M\"obius function.
\end{theorem}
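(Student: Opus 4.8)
The plan is to count, for each dimension $m$, the number of spheres $S^m$ appearing in the decomposition of Theorem \ref{htpytors-free}, which by construction equals the number of Lyndon basis elements $b(l_i)$ of the Lie algebra $\LL(M)$ having height $h_i = m$; equivalently, the number of basis elements of $\LL(M)$ in weight grading $m-1$ (with the weight grading where $|u_i| = n-1$, $|u_i'| = n$). Since $\LL(M)$ is a free $D_M$-module whose universal enveloping algebra is $H_*(\Omega M; D_M) \cong T_{D_M}(u_1,\dots,u_r,u_1',\dots,u_r')/(l_M)$ (Proposition \ref{manhomloop}, Proposition \ref{univlie}), I would first extract the ranks of the graded pieces of $H_*(\Omega M; D_M)$ from its Poincar\'e series. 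The algebra is Koszul over any quotient field of $D_M$ (this is implicit in the use of \cite{BerBor15} and Proposition \ref{man-Kos}), so its Poincar\'e series is the reciprocal of the Poincar\'e series of the quadratic dual, which — because the single defining relation $l_M$ has weight $2$ and comes from the $(2n+1)$-dimensional fundamental class against the $2r$ generators in dimensions $n-1$ and $n$ — is exactly $q_M(t) = 1 - rt^n - rt^{n+1} + t^{2n+1}$. Hence $\sum_m \dim_k (H_*(\Omega M;k))_m \, t^m = 1/q_M(t)$.

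Next I would pass from the enveloping algebra to the Lie algebra itself via the graded form of the Poincar\'e--Birkhoff--Witt theorem (Proposition \ref{PBW-quad}): the associated graded of $H_*(\Omega M;D_M)$ with respect to the length filtration is the symmetric algebra $S(\LL(M))$ on the free graded module $\LL(M)$. Writing $a_j = \operatorname{rank}_{D_M} \LL_j(M)$ for the rank in weight $j$, the standard generating-function identity for a free graded-symmetric (here, since the generators sit in mixed parities, one must be careful — but over $D_M$ with the homological grading the symmetric algebra factors as a product over the generators) gives a product formula relating $1/q_M(t)$ to the $a_j$. Taking logarithms converts the product into a sum: $\log(1/q_M(t)) = -\log q_M(t) = \sum_j a_j \cdot (\text{a universal series in } t^j \text{ coming from } \log \text{ of the symmetric algebra factor})$. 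This is precisely the "necklace/Witt" bookkeeping, and M\"obius inversion on the divisibility poset recovers each $a_j$ from the coefficients $\eta_m$ of $\log q_M(t)$. Concretely, one gets $a_{m-1} = l_{m-1} = -\sum_{j \mid m-1} \mu(j)\, \eta_{(m-1)/j} / j$, which is the claimed formula; the number of $\pi_s S^m \otimes D_M$ summands in $\pi_s(M)\otimes D_M$ is then $a_{m-1}$, since each weight-$(m-1)$ Lyndon basis element contributes one factor $\Omega S^m$ to $\Omega M_\tau$ by Theorem \ref{loopdectors-free}.

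The one genuinely delicate point — and the step I would treat most carefully — is the parity bookkeeping in the transition from the graded algebra $S(\LL(M))$ to a generating function over $\Z$. When $n-1$ and $n$ have opposite parities, the symmetric algebra on an odd-degree generator is an exterior algebra (a factor $1+t^j$) while on an even-degree generator it is a polynomial algebra (a factor $1/(1-t^j)$), so the naive formula $\prod_j (1-t^j)^{-a_j}$ is not literally correct and the logarithm picks up sign-dependent correction terms. The resolution is that this is exactly parallel to \cite[Theorem 3.8]{BaBa15_unpub}, where the same issue arises and is resolved: one either works with the ranks of the rationalized homotopy groups directly (where the signs are absorbed into the definition of $\eta_m$ via $q_M$ already encoding the correct alternating structure), or one observes that $q_M(t)$ has been written precisely so that $1/q_M(t)$ is the honest Poincar\'e series of $H_*(\Omega M; k)$ and the Milnor--Moore/PBW identity at the level of Poincar\'e series already incorporates the correct $\pm$ signs. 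I would invoke \cite[Theorem 3.8]{BaBa15_unpub} to transport the computation verbatim, noting only that for $(n-1)$-connected $(2n+1)$-manifolds the relevant series $q_M$ has the stated form, and that the Koszulness input needed there is supplied here by Proposition \ref{man-Kos} together with \cite{BerBor15}.
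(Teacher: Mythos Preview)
Your proposal is correct and takes essentially the same approach as the paper: the paper's proof is a one-line reference to \cite[Theorem 3.8]{BaBa15_unpub} after recording that $1/q_M(t)$ is the Poincar\'e series of $H_*(\Omega M;D_M)$ via Koszulness, and you have correctly unpacked the content of that citation (PBW to pass from the enveloping algebra to $\LL(M)$, logarithm, M\"obius inversion, with the parity bookkeeping flagged) before invoking the same reference. Your sketch in fact supplies more detail than the paper itself does.
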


%
%

These computations have consequences in relation to exponents of homotopy groups of $(n-1)$-connected $(2n+1)$-manifolds. We first note that by a theorem of Miller \cite{Mil79} that these manifolds are all formal. Recall that simply connected, finite cell complexes either have finite dimensional rational homotopy groups or exponential growth of ranks of rational homotopy groups (cf. \cite[\S 33]{FHT01}). The former are called rationally elliptic while the latter are called rationally hyperbolic. We note that the $(n-1)$-connected manifolds of dimension $2n+1$ are rationally hyperbolic if and only if $r\geq 2$. For if $r\geq 2$, there are at least two of the generating $u_i$ and $u_i'$. Then one observes that after switching the ordering appropriately the word
$$u_1'u_2'u_1'u_1u_1'$$
is a Lyndon word in degree $>2(n+1) (=2\dim(M))$ which induces a non-trivial rational homotopy group in dimension $> 2\dim(M)$ implying that $M$ is rational hyperbolic. If $r=0$, the rational cohomology looks like $S^{2n+1}$ and if $r=1$, the rational cohomology looks like $S^n \times S^{n+1}$,  then formality implies that the manifolds are rationally elliptic.

There are many conjectures that lie in the dichotomy between rationally elliptic and hyperbolic spaces. One such is a conjecture by Moore \cite{NeSe81} (also \cite[pp.518]{FHT01}) which implies that  for a rationally hyperbolic space $X$, there are primes $p$ for which the homotopy groups do not have any exponent at $p$, that is, for any power $p^r$ there is an element $\alpha\in \pi_*(X)$ of order $p^r$. We verify the following version.
\begin{theorem}\label{Moorehcm}
If $p\notin \Sigma_M$ and $r\geq 2$, the homotopy groups of $M$ do not have any exponent at $p$. 
\end{theorem}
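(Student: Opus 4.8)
The plan is to leverage Theorem \ref{htpytors-free}, which identifies $\pi_\ast(M)_{(p)}$ with a direct sum $\bigoplus_{i\geq 1}\pi_\ast(S^{h_i})_{(p)}$ for any $p\notin\Sigma_M$, and to show that this sum contains infinitely many odd-dimensional sphere summands when $r\geq 2$. Once we know that, it suffices to recall that $\pi_\ast(S^{2m+1})$ contains $p$-torsion of arbitrarily large order: concretely, the mod $p$ Moore space arguments of Cohen--Moore--Neisendorfer give elements of order $p^k$ in $\pi_\ast(S^{2m+1})$ for all $k$ (alternatively one may invoke the classical fact that the $p$-primary homotopy of an odd sphere is not annihilated by any fixed power of $p$, which is already implicit in the discussion of homotopy exponents in the introduction). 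Since each such summand injects into $\pi_\ast(M)_{(p)}$ via $\lambda_i$, the manifold $M$ cannot have a homotopy exponent at $p$.

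First I would make precise the claim that infinitely many of the integers $h_i$ are odd (equivalently, that infinitely many basis weights $w$ with $\LL_w(M)\neq 0$ have $w$ with $h=w+1$ odd, i.e. $w$ even). Recall $|u_i|=n-1$, $|u_i'|=n$, so a Lyndon bracket of total length $a$ in the $u_i$'s and length $b$ in the $u_i'$'s sits in weight $w=a(n-1)+bn=(a+b)(n-1)+b$, giving $h=w+1$. For $r\geq 2$ the Lie algebra $\LL(M)$ is rationally hyperbolic (as observed in the paragraph preceding the theorem, using the explicit Lyndon word $u_1'u_2'u_1'u_1u_1'$), hence $\dim_\Q \LL_w(M)\otimes\Q$ grows without bound; in particular $\LL_w(M)$ is nonzero for infinitely many $w$. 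To guarantee infinitely many $w$ of the parity making $h$ odd, I would exhibit an explicit infinite family of $J$-standard Lyndon words (in the sense of Theorem \ref{Liebasis}) of the required parity — for instance words of the form $u_1'u_2'(u_1')^k u_1 u_1'$ or a suitable variant, chosen so that they avoid the forbidden factor coming from the single relation $l_M$ and so that their length contributions $a,b$ produce odd $h$ for infinitely many $k$. Checking that these are genuinely Lyndon and genuinely $R$-irreducible is a finite combinatorial verification of the same flavour as the one already done for the hyperbolicity statement.

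With an infinite family $\{h_{i_j}\}$ of odd integers in hand, the argument concludes as follows: fix $p\notin\Sigma_M$; by Theorem \ref{htpytors-free} each $\pi_\ast(S^{h_{i_j}})_{(p)}$ is a direct summand of $\pi_\ast(M)_{(p)}$; by the non-existence of a homotopy exponent for odd-dimensional spheres at $p$, for every $k$ there is some $j$ and some element of order $p^k$ in $\pi_\ast(S^{h_{i_j}})_{(p)}$; this element maps to an element of the same order in $\pi_\ast(M)_{(p)}=\pi_\ast(M)\otimes\Z_{(p)}$, and since $p\notin\Sigma_M$ there is a corresponding $p^k$-torsion element in $\pi_\ast(M)$ itself. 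Hence no power of $p$ annihilates the $p$-torsion of $\pi_\ast(M)$, i.e. $M$ has no exponent at $p$.

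The main obstacle I anticipate is the parity bookkeeping in the second paragraph: one must be careful that the Lyndon words produced are simultaneously (i) of unbounded length, (ii) $R$-irreducible with respect to the single quadratic relation $l_M$, and (iii) of the correct parity so that infinitely many yield odd $h_i$ — and the parity of $h$ depends on $n$ through $w=(a+b)(n-1)+b$, so the construction must be organised to cover both $n$ even and $n$ odd (in the $n$ odd case $h=a+b+1\ \mathrm{mod}\ 2$ up to the correction, while for $n$ even $h$ depends on $b$). Splitting into these two cases and writing down one explicit infinite family in each should dispose of it; everything else is a direct citation of Theorem \ref{htpytors-free} and of known facts about homotopy of odd spheres.
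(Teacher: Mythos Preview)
Your overall strategy --- use Theorem \ref{htpytors-free} to exhibit sphere summands and then import unbounded $p$-torsion from those spheres --- is exactly the paper's strategy, but your execution contains a genuine mathematical error and an unnecessary detour.

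The error is the claim that $\pi_\ast(S^{2m+1})$ contains $p$-torsion of arbitrarily large order, and the attribution of this to Cohen--Moore--Neisendorfer. The truth is the opposite: James (at $p=2$) and Cohen--Moore--Neisendorfer (at odd $p$) prove that odd spheres \emph{do} have homotopy exponents --- $p^m$ annihilates the $p$-torsion in $\pi_\ast(S^{2m+1})$. So a single odd sphere gives you nothing, and your ``alternatively one may invoke the classical fact that the $p$-primary homotopy of an odd sphere is not annihilated by any fixed power of $p$'' is simply false. What you actually need is a \emph{lower} bound: that for any $s$ there is some sphere $S^l$ (with $l$ large) whose homotopy contains an element of order $p^s$. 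This is the content of Gray's theorem \cite{Gra69}, or equivalently follows from the image of $J$ in stable homotopy (torsion of order $p^s$ in $\pi^s_\ast$ for every $s$, realised unstably in $\pi_{k+l}S^l$ once $l>k+1$). That is the input the paper cites.

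Given this, your parity bookkeeping is unnecessary. The paper observes that $M$ is rationally hyperbolic when $r\geq 2$, hence has nontrivial rational homotopy in arbitrarily high degrees; by Theorem \ref{htpytors-free} this forces $\pi_\ast S^l_{(p)}$ to occur as a summand of $\pi_\ast M_{(p)}$ for arbitrarily large $l$ (of whatever parity). Gray's result then finishes immediately. You do not need to manufacture odd-dimensional Lyndon words or split into cases on the parity of $n$; drop that entire second paragraph and replace the CMN citation with Gray (or the image of $J$), and your argument becomes correct and coincides with the paper's.
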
   

\begin{proof}
We have noted above that these $M$ are rationally hyperbolic, so it follows that there are non-trivial rational homotopy groups in arbitrarily large dimensions. It follows that in the expression of Theorem \ref{htpyformtors-free}  for arbitrarily large $l$, $\pi_*S^l_{(p)}$ occurs as a summand of $\pi_*M_{(p)}$. Now we observe \cite{Gra69}  that any $p^s$ may occur as the order of an element in $\pi_*S^l$ for arbitrarily large $l$. \footnote{ This also follows from the fact that the same is true for the stable homotopy groups and these can be realized as $\pi_k^s\cong \pi_{k+l}S^l$  for $l>k+1$. Now torsion of order $p^s$ for any $s$ occurs in the image of the $J$-homomorphism \cite[Theorem 1.1.13]{Rav86}.}  
\end{proof}

In section \ref{loopdectors}, using a different method we verify that $\Omega (S^n \vee S^{n+1})$ is a retract of $\Omega M$ if $r\geq 2$ (Corollary \ref{rethyp}).  It follows from \cite{Hil55} that for such a $M$, $\pi_\ast M$ has summands $ \pi_\ast S^k$ for $k$ arbitrarily large, so they cannot have homotopy exponents at any prime $p$. 

\section{Loop space decompositions} \label{loopdectors}
In this section we provide a splitting of the loop space of a $(n-1)$-connected $(2n+1)$-manifold with homology described in \eqref{Mhom}, in the case $r\geq 1$. This splitting is similar to the loop space decompositions of $(n-1)$-connected $2n$-manifolds in \cite{BeTh14}. Recall from    Proposition \ref{Mhur} that the Hurewicz map in degree $n$ and $n+1$ is surjective. As constructed in the proof of Proposition \ref{Mhur} we have a map 
$$\tau : Y := \vee_r S^n \vee M(G,n) \to M$$
 We recall from Notation \ref{map-gen} that we may choose the restriction of $\tau$ on the $i^{th}$ sphere of the wedge to be $\nu_i$. We analogously use the $\nu_i'$ to construct a map 
$$\tau': Y':= \vee_r S^{n+1} \to M.$$
Let $Z:=\vee_{r-1} S^n \vee_{r-1}S^{n+1} \vee M(G,n)$ which we think of as a subspace of the wedge of $Y$ and $Y'$ leaving out the last sphere of $Y$ and the last sphere of $Y'$. We write $\kappa$ for the induced map $Z\to M$. Let $Q$ denote the mapping cone of $\kappa$. From our choice of $\kappa$, we have that $H^\ast(Q) \cong H^\ast(S^n \times S^{n+1})$. 

We note that  the composites 
$$\lambda: S^n \stackrel{\nu_r}{\to} M \to Q,~~ \lambda': S^{n+1} \stackrel{\nu_r'}{\to} M \to Q$$
 map onto the generators on $H_n(Q)$ and $H_{n+1}(Q)$ respectively. We easily compute as in \cite{BerBor15}  $H_\ast (\Omega Q) \cong \Z[u,v]$  with $|u|=n-1$ and $|v|=n$ so that the map
$$\Omega \lambda : \Omega S^n \to \Omega Q$$
sends the generator in $H_{n-1}(\Omega S^n)$ to $u$, and the map
$$\Omega \lambda' : \Omega S^{n+1} \to \Omega Q$$
sends the generator in $H_n(\Omega S^{n+1})$ to $v$. The composite 
$$\Omega S^n \times \Omega S^{n+1} \stackrel{\Omega \lambda \times \Omega \lambda'}{\to} \Omega Q \times \Omega Q \to \Omega Q$$ 
is clearly a homology isomorphism, and hence a weak equivalence. 

We write $F$ for the homotopy fibre of the map $M\to Q$, so that we have a fibration 
$$F\to M \to Q$$ 
and that the map $\kappa : Z \to M$ factors through $F$. Proceeding as in \cite{BeTh14} we prove two lemmas. The first is a loop space decomposition result $\Omega M \simeq \Omega Q \times \Omega F$. The second is an identification of the homotopy fibre $F\simeq \Omega Q \ltimes Z$, where the half-smash product $\ltimes$ is defined as 
$$X\ltimes Y := X_+\wedge Y \simeq X\times Y/X\times \ast.$$ 
 We start with the loop space decomposition result. 
\begin{lemma}\label{loopM}
As a space $\Omega M$ splits as a product
$$\Omega M \simeq \Omega F \times \Omega Q.$$
Further in the fibration $\Omega Q \to F \to M$ the inclusion of the fibre $\Omega Q \to F$ is null-homotopic. 
\end{lemma}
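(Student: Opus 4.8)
The plan is to prove Lemma~\ref{loopM} by exhibiting a section of $\Omega M \to \Omega Q$ up to homotopy, using the composite $\Omega S^n \times \Omega S^{n+1} \to \Omega Q$ constructed just above the lemma statement. First I would note that the maps $\nu_r : S^n \to M$ and $\nu_r' : S^{n+1} \to M$ (from Notation~\ref{map-gen}) loop to give $\Omega\nu_r : \Omega S^n \to \Omega M$ and $\Omega\nu_r' : \Omega S^{n+1} \to \Omega M$, which can be multiplied using the $H$-space structure on $\Omega M$ to produce a map $\theta : \Omega S^n \times \Omega S^{n+1} \to \Omega M$. Composing with $\Omega(M\to Q)$ gives exactly the map $\Omega S^n \times \Omega S^{n+1} \to \Omega Q$ shown to be a weak equivalence; call its homotopy inverse $\psi$. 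Then $s := \theta \circ \psi : \Omega Q \to \Omega M$ is a section of $\Omega M \to \Omega Q$ up to homotopy. Since all spaces in sight are simply connected (or at least simple) $H$-spaces, the standard argument (as in \cite{BeTh14}) shows that the fibration $\Omega Q \to F \to M$ then splits: one forms the composite $F \times \Omega Q \to \Omega M \times \Omega M \to \Omega M$ — wait, more precisely one uses the section $s$ together with the fibre inclusion to build a map $\Omega F \times \Omega Q \to \Omega M$ and checks it induces an isomorphism on homology.

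The homology check is where I would spend the computational effort. We know $H_\ast(\Omega M; D_M)$ from Proposition~\ref{manhomloop}, and $H_\ast(\Omega Q) \cong \Z[u,v]$ with $|u| = n-1$, $|v| = n$. Looping the fibration $F \to M \to Q$ gives $\Omega F \to \Omega M \to \Omega Q$, and since $s$ splits this off, the Serre spectral sequence (or rather the fact that $\Omega M \to \Omega Q$ admits a section and $\Omega Q$ is an $H$-space acting on the fibre) collapses to give $H_\ast(\Omega M) \cong H_\ast(\Omega F) \otimes H_\ast(\Omega Q)$ as modules, and then the section being an $H$-map up to homotopy (it is a product of loop maps, hence compatible with Pontryagin products after the equivalence $\Omega S^n \times \Omega S^{n+1} \simeq \Omega Q$) upgrades this to a ring splitting. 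One then reads off $H_\ast(\Omega F)$ and verifies that the natural map $\Omega F \times \Omega Q \to \Omega M$ induces the identification; since both sides are simple spaces a homology equivalence is a weak equivalence, hence a genuine homotopy equivalence, giving $\Omega M \simeq \Omega F \times \Omega Q$.

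For the second assertion — that the fibre inclusion $\Omega Q \to F$ of the fibration $\Omega Q \to F \to M$ is null-homotopic — I would argue as follows. The section $s : \Omega Q \to \Omega M$ of $\Omega M \to \Omega Q$ corresponds, under the standard correspondence between sections of $\Omega M \to \Omega Q$ and maps $Q \to \text{(total space with a section)}$, or more directly via the fibration $\Omega Q \to F \to M$, to a choice of trivialization; concretely, the connecting map $\Omega M \to \Omega Q$ having a section means that in the fibre sequence $\Omega Q \to F \to M \to Q$ the map $F \to M$ admits a ``coretraction'' after looping, which forces the classifying map for $\Omega Q \to F$ — namely the map $\Omega M \to \Omega Q$ — to be split surjective, and dualizing within the fibre sequence one finds that $\Omega Q \to F$ factors through the homotopy fibre of $\Omega M \to \Omega Q$ in a way that is killed precisely by the section. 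I would model this exactly on the corresponding step in \cite{BeTh14}: the existence of the section $s$ implies the fibration $\Omega Q \to F \to M$ is (fibre-homotopy) trivial, $F \simeq M \times \Omega Q$ is false in general, but the weaker statement that the fibre inclusion is null follows because $\Omega Q \to F$ composed with $F \to M$ is null (it is the constant map, being a fibre inclusion composite) and the section gives a nullhomotopy extending over $F$ itself.

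The main obstacle I anticipate is the second statement rather than the splitting: verifying that the fibre inclusion $\Omega Q \to F$ is genuinely null-homotopic (not merely that the fibration is homologically split) requires being careful about the distinction between $\Omega F$ and $F$ and about which fibration is being looped. The cleanest route is to observe that $\Omega M \to \Omega Q$ having a section $s$ means the fibration $\Omega Q \to F \to M$ is \emph{principal} in the sense that it is the pullback along $M \to Q$ of the path-loop fibration $\Omega Q \to PQ \to Q$, and $\Omega Q \to F$ is pulled back from $\Omega Q \to PQ$ which is null since $PQ$ is contractible; the pullback of a null map is null. This reduces the whole second claim to identifying $F$ as this pullback, which is immediate from the definition of $F$ as the homotopy fibre of $M \to Q$. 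I would present the proof in this order: (1) construct $\theta$ and $s$; (2) identify $F$ with the pullback of the path fibration and deduce the fibre inclusion $\Omega Q \to F$ is null; (3) use $s$ and step (2) to build $\Omega F \times \Omega Q \to \Omega M$ and check it is a homology isomorphism with $D_M$-coefficients, hence a weak — and then (by simplicity) homotopy — equivalence.
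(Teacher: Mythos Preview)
Your construction of the section $s : \Omega Q \to \Omega M$ via $\theta$ is correct and actually a little cleaner than the paper's: the paper routes through the full wedge $V = \vee_r S^n \vee_r S^{n+1}$ and invokes Hilton--Milnor to split $\Omega p : \Omega V \to \Omega(S^n \times S^{n+1})$, whereas you use only $\nu_r$, $\nu_r'$ and the loop multiplication directly. Both produce a right homotopy inverse to $\Omega M \to \Omega Q$, and once that exists the product decomposition $\Omega M \simeq \Omega F \times \Omega Q$ follows formally by multiplying the section against the fibre inclusion --- your homology discussion is unnecessary (though not wrong).

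There is, however, a genuine error in your argument for the second assertion. Your ``cleanest route'' is incorrect: it is \emph{not} true that pulling back a null-homotopic fibre inclusion gives a null-homotopic fibre inclusion. Every principal fibration $\Omega Q \to F \to M$ arises as a pullback of the path-loop fibration $\Omega Q \to PQ \to Q$, yet the fibre inclusion is certainly not always null; take $M = \ast$ and $Q = S^2$, so that $F = \Omega S^2$ and the fibre inclusion is the identity. The pullback square only tells you that the composite $\Omega Q \to F \to PQ$ is null, which says nothing about $\Omega Q \to F$ itself.

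The right argument --- which is what the paper is implicitly invoking when it says ``it suffices to show there is a right homotopy inverse'' --- uses the section directly. In the fibre sequence
\[
\cdots \longrightarrow \Omega M \xrightarrow{\;\Omega q\;} \Omega Q \xrightarrow{\;\delta\;} F \longrightarrow M \longrightarrow Q
\]
consecutive maps compose to the constant map, so $\delta \circ \Omega q \simeq \ast$. Since $s$ satisfies $\Omega q \circ s \simeq \mathrm{id}$, one has
\[
\delta \;\simeq\; \delta \circ (\Omega q \circ s) \;=\; (\delta \circ \Omega q)\circ s \;\simeq\; \ast,
\]
and this is precisely the null-homotopy of the fibre inclusion. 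Replace your final paragraph with this one-line argument and the proof is complete.
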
  

\begin{proof}
We continue the fibration sequence $F\to M \to Q$ to obtain a fibration $\Omega F \to \Omega M \to \Omega Q$. It suffices to show that there is a right homotopy inverse to the map $\Omega M \to \Omega Q$. Write $V=\vee_r S^n \vee_r S^{n+1}$ viewed as a subspace of $Y\vee Y'$ so that $\tau$ and $\tau'$ induce a map $\tau_V: V \to M$. Consider the composite $\pi$
$$\Omega V \stackrel{\Omega \tau_V}{\longrightarrow} \Omega M  \to \Omega Q.$$
Now under the identification $\Omega Q \simeq \Omega S^n \times \Omega S^{n+1}$ we observe that $\pi\simeq \Omega p$ where $p$ is defined as the composite 
$$V \to S^n \vee S^{n+1} \to S^n \times S^{n+1},$$
the first map being the one which quotients out the factors common in $Z$ and $V$. As $V$ is a wedge of spheres, it is clear from the Hilton-Milnor Theorem that $\Omega p$ has a right inverse.
\end{proof}

Next we proceed towards proving that $F$ splits as a half-smash product of $\Omega Q$ and $Z$ as in \cite[Proposition 2.5]{BeTh14}. The first step involves computing the homology of $F$ from the Serre spectral sequence for the principal fibration $\Omega Q \to F \to M$. Note that from a result of Moore \cite{Moo56} that this is a spectral sequence of left $H_\ast \Omega Q$-modules.  

\begin{prop} \label{homF}
As a left $H_\ast(\Omega Q)$-module, the homology of $F$ is given by the formula 
$$\tilde{H}_\ast(F) \cong H_\ast(\Omega Q)\otimes \tilde{H}_\ast(Z).$$
\end{prop}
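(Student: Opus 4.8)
The plan is to analyze the Serre spectral sequence of the principal fibration $\Omega Q \to F \to M$, which by Moore's theorem \cite{Moo56} is a spectral sequence of left $H_\ast(\Omega Q)$-modules with $E^2$-page $H_\ast(M) \otimes H_\ast(\Omega Q)$ (homology taken over the appropriate coefficient ring; one should be slightly careful here about torsion in $G$, but the formula is stated integrally and the argument below is insensitive to this as long as we keep track of $H_\ast(\Omega Q)\cong \Z[u,v]$ being free). The key structural input is that the map $\kappa : Z \to M$ factors through $F$, together with Lemma \ref{loopM}, which tells us that the inclusion $\Omega Q \to F$ is null-homotopic. The latter means the fibration $\Omega Q \to F \to M$ has a section after looping, or more directly that the spectral sequence behavior is controlled: the composite $Z \to F \to M$ is $\kappa$, and $Z \to M$ is (up to the wedge summands) detecting exactly the classes complementary to the two classes $\nu_r, \nu_r'$ that survive to $Q$.

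First I would identify the differentials. Since $H^\ast(Q) \cong H^\ast(S^n \times S^{n+1})$ and $M \to Q$ is degree-compatible, the relevant transgressions in the Serre spectral sequence for $\Omega Q \to F \to M$ are forced: the generator $u \in H_{n-1}(\Omega Q)$ transgresses to the class dual to $\nu_r$ and $v \in H_n(\Omega Q)$ transgresses to the class dual to $\nu_r'$ in $H_\ast(M)$. This is because these are precisely the transgressions in the path-loop fibration $\Omega Q \to PQ \to Q$ pulled back along $M \to Q$. Because $H_\ast(\Omega Q) = \Z[u,v]$ is a polynomial algebra on these two transgressive generators, the spectral sequence is a "Koszul-type" situation: as a module over $H_\ast(\Omega Q)$, the $E^\infty$-page is $H_\ast(\Omega Q) \otimes \tilde H_\ast(Z)$, where $\tilde H_\ast(Z)$ appears as the homology of $M$ with the two cohomology generators pulled back from $Q$ "killed." Concretely, $H_\ast(M) = \Z \oplus (\Z^r \oplus G)_n \oplus (\Z^r)_{n+1} \oplus \Z_{2n+1}$, and the two classes dual to $\nu_r$ (in degree $n$) and $\nu_r'$ (in degree $n+1$) transgress, together with the fundamental class in degree $2n+1$ being hit (it is $uv$ times the class dual to... — this needs care), leaving exactly $\Z \oplus (\Z^{r-1} \oplus G)_n \oplus (\Z^{r-1})_{n+1} = H_\ast(Z)$ as the quotient.

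Then I would resolve the module extension problem. After running the spectral sequence we get that $\tilde H_\ast(F)$ has an associated graded isomorphic to $H_\ast(\Omega Q) \otimes \tilde H_\ast(Z)$ as $H_\ast(\Omega Q)$-modules; to upgrade this to an actual isomorphism I would use the map $\kappa : Z \to F$ (the factorization through $F$) together with the $H_\ast(\Omega Q)$-action to build an explicit map $H_\ast(\Omega Q) \otimes \tilde H_\ast(Z) \to \tilde H_\ast(F)$, sending $\xi \otimes z$ to $\xi \cdot \kappa_\ast(z)$, and check it is an isomorphism by comparing with the associated graded. The main obstacle I anticipate is precisely this last step — verifying that the module extension is trivial and that the explicitly constructed map (rather than just an abstract associated-graded statement) is an isomorphism — because it requires knowing that $\kappa_\ast : \tilde H_\ast(Z) \to \tilde H_\ast(F)$ hits a set of $H_\ast(\Omega Q)$-module generators and that there are no hidden multiplicative relations. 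The null-homotopy of $\Omega Q \to F$ from Lemma \ref{loopM} is the crucial tool here: it implies the fibration $\Omega Q \to F \to M$ is trivial on homology in the strong sense needed, so the spectral sequence degenerates at the module-theoretic level and the extension splits. This is exactly the pattern of \cite[Proposition 2.5]{BeTh14}, and I would follow that proof closely.
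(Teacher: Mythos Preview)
Your overall approach---running the Serre spectral sequence of the principal fibration $\Omega Q\to F\to M$ as a spectral sequence of $H_\ast(\Omega Q)$-modules, and identifying the differentials by comparison with the path-loop fibration $\Omega Q\to PQ\to Q$---is exactly what the paper does. The paper makes the differentials completely explicit: $d^n(v_r)=u$, $d^{n+1}(v_r')=v$, and $d^n([M])=u\otimes v_r'$ (your parenthetical ``this needs care'' about the top class is resolved this way; $[M]$ supports a $d^n$, it is not the target of one). After these two pages one has $E^{n+2}=E^\infty \cong \Z\oplus H_\ast(\Omega Q)\otimes\tilde H_\ast(Z)$, concentrated in filtration degrees $0,n,n+1$.

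Where you diverge from the paper is in handling the extension problem. The paper's argument is elementary and does \emph{not} use Lemma~\ref{loopM}: in each total degree the filtration has only two nontrivial steps, the quotient (filtration $n{+}1$ piece) is free abelian, so the short exact sequence $0\to \Z^k\otimes(\Z^{r-1}\oplus G)\to H_\ast F\to \Z^{k'}\to 0$ splits additively; since $E^\infty$ is free as an $H_\ast(\Omega Q)$-module, lifting the generators then gives the module isomorphism. Your proposed route---building the module map $\xi\otimes z\mapsto \xi\cdot\kappa_\ast(z)$ and checking it is an isomorphism on associated graded---also works, and indeed is closer in spirit to what the paper does one step later in Lemma~\ref{decF} at the space level. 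But you misattribute the key input: the null-homotopy of $\Omega Q\to F$ from Lemma~\ref{loopM} plays no role in this homological statement. All you need is that $\kappa:Z\to M$ lifts to $F$ (so $\kappa_\ast$ hits the module generators in $E^\infty$) and the $H_\ast(\Omega Q)$-module structure on the spectral sequence. Drop the appeal to Lemma~\ref{loopM} here and your argument is clean.
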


\begin{proof}
In the spectral sequence for $\Omega Q\to F \to M$ we have, 
$$E^2_{\ast,\ast} = H_\ast (M) \otimes H_\ast(\Omega Q) \implies H_\ast (F).$$
The tensor product decomposition of the $E^2$-term arises from the fact that $H_\ast (\Omega Q) \cong \Z[u,v]$ is free Abelian in each degree. For degree reasons the only non-trivial differentials are $d^n$ and $d^{n+1}$. We proceed to compute these two differentials to deduce the result. 

We write $H_\ast(M)$ as 
$$H_\ast(M) \cong \Z \oplus \tilde{H}_\ast(Z) \oplus \Z\{v_r,v_r',[M]\}$$
Note that the inclusion $Z\to M$ has  a lift to $F$, so that $\tilde{H}_\ast(Z)$ in the $0^{th}$-row of the $E^2$-page must survive to the $E^\infty$-page. From the $H_\ast(\Omega Q)$-module structure, it follows that all the differentials on $H_\ast(\Omega Q)\otimes \tilde{H}_\ast(Z)$ must be $0$.  It remains to compute the differentials on the classes $v_r$, $v_r'$ and $[M]$. For this note the commutative diagram of principal fibrations
$$\xymatrix{ \Omega Q \ar@{=}[r] \ar[d]    & \Omega Q \ar[d] \\ 
                          F  \ar[d] \ar[r]                          &   PQ   \ar[d] \\ 
                          M \ar[r]                                     & Q. }$$ 
We note that $v_r$, $v_r'$ and $[M]$ in $H_\ast M$ map respectively onto the generators in $H_nQ$, $H_{n+1} Q$ and $H_{2n+1}Q$. By comparing the two homology Serre spectral sequences, we deduce 
$$d^n(v_r)= u, ~~ d^{n+1}(v_r')=v,~~ d^n([M])= u\otimes v_r'.$$
Now in the spectral sequence we have $E^n = E^2$ and the above formula imply that 
$$E^{n+1}_{\ast, q} = \begin{cases} 
                                       \Z[v]            & \mbox{if} ~ q=0 \\  
                                    \Z[u,v]\otimes ( \Z\{v_1,\cdots, v_{r-1}\}\oplus G) & \mbox{if} ~ q=n \\  
                        \Z[u,v]\otimes ( \Z\{v_1',\cdots, v_{r-1}'\}\oplus G) \oplus \Z[v]\otimes \Z\{v_r'\} & \mbox{if} ~ q=n+1 \\  
                                      0  & \mbox{otherwise.}
\end{cases}$$ 
In the $(n+1)$-page, the differential $d^{n+1}$ sends the factor $\Z[v]\otimes \Z[v_r']$ onto $(v)\subset \Z[v]$. Therefore the $E^{n+2}$ is 
$$E^{n+2}_{\ast, q} = \begin{cases} 
                                       \Z            & \mbox{if} ~ q=0 \\  
                                    \Z[u,v]\otimes ( \Z\{v_1,\cdots, v_{r-1}\}\oplus G) & \mbox{if} ~ q=n \\  
                        \Z[u,v]\otimes ( \Z\{v_1',\cdots, v_{r-1}'\}\oplus G) & \mbox{if} ~ q=n+1 \\  
                                      0  & \mbox{otherwise.}
\end{cases}$$
There are no more non-trivial differentials, so that $E^{n+2}=E^\infty$. Note that this implies that the $E^\infty$-page is precisely $H_\ast(\Omega Q)\otimes \tilde{H}_\ast(Z)$. In the $E^\infty$-page, the only non-zero lines are the vertical $n$-line and the vertical $(n+1)$-line, and  all the possible torsion lies along the vertical $n$-line. Thus the extension problem in each case looks like 
$$0\to  \Z^k \otimes (\Z^{r-1}\oplus G) \to H_\ast F \to \Z^{k'} \to 0$$
which are clearly all trivial.  Hence the result follows.  
\end{proof}

We now use the computation of Proposition \ref{homF} to deduce our second decomposition result 
\begin{lemma}\label{decF}
There is a homotopy equivalence 
$$F\simeq \Omega Q \ltimes Z .$$
\end{lemma}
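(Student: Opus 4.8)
The plan is to follow the strategy of Beben--Theriault \cite{BeTh14}, Proposition 2.5, adapting it to the present situation. The key point is that $\Omega Q \simeq \Omega S^n \times \Omega S^{n+1}$ acts on the space $Z$ in a homotopically meaningful way, and that this action is compatible with the restriction $\kappa : Z \to M$. Concretely, I would first recall that the map $\kappa : Z \to M$ lifts through $F$ (as was already observed in Section \ref{loopdectors}), giving a map $\bar{\kappa} : Z \to F$. Using the principal fibration $\Omega Q \to F \to M$ and the fact that $\Omega Q$ is a topological group (using Moore loops), I would then form the composite
$$\mu : \Omega Q \times Z \xrightarrow{\mathrm{id}\times \bar{\kappa}} \Omega Q \times F \xrightarrow{\text{action}} F,$$
where the second map is the action of the fibre on the total space of the principal fibration. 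Since the inclusion $\Omega Q \to F$ of the fibre is null-homotopic by Lemma \ref{loopM}, the restriction of $\mu$ to $\Omega Q \times \ast$ is null-homotopic, so $\mu$ factors (up to homotopy) through the half-smash $\Omega Q \ltimes Z = (\Omega Q \times Z)/(\Omega Q \times \ast)$, yielding a map $\theta : \Omega Q \ltimes Z \to F$.

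The second step is to show $\theta$ is a homology isomorphism. Here I would use that $H_\ast(\Omega Q \ltimes Z) \cong H_\ast(\Omega Q) \otimes \tilde H_\ast(Z)$ as a left $H_\ast(\Omega Q)$-module (this is a standard computation for the half-smash, since $\Omega Q \ltimes Z \simeq \Omega Q_+ \wedge Z$ and $H_\ast(\Omega Q) \cong \Z[u,v]$ is free in each degree, so the Künneth theorem gives the tensor decomposition with no Tor terms). On the other side, Proposition \ref{homF} gives $\tilde H_\ast(F) \cong H_\ast(\Omega Q) \otimes \tilde H_\ast(Z)$, again as a left $H_\ast(\Omega Q)$-module. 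The map $\theta$ is by construction a map of left $H_\ast(\Omega Q)$-modules, and on the submodule $1 \otimes \tilde H_\ast(Z)$ it is induced by $\bar\kappa$, which by the argument in the proof of Proposition \ref{homF} (the classes coming from $Z$ survive to $E^\infty$ and the lift realizes them) is an isomorphism onto the $0$-section copy of $\tilde H_\ast(Z)$ inside $\tilde H_\ast(F)$. Since both sides are free $H_\ast(\Omega Q)$-modules generated by $\tilde H_\ast(Z)$ and $\theta$ is an $H_\ast(\Omega Q)$-linear map sending generators to generators bijectively, $\theta_\ast$ is an isomorphism.

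Finally, both $\Omega Q \ltimes Z$ and $F$ are simply connected (indeed $(n-1)$-connected, as $Z$ and $Q$ are $(n-1)$-connected and $n \geq 2$), so a homology isomorphism between them is a weak homotopy equivalence by the Whitehead theorem, hence a homotopy equivalence since the spaces have the homotopy type of CW complexes. This completes the argument.

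The step I expect to be the main obstacle is verifying carefully that $\theta$ is $H_\ast(\Omega Q)$-linear and that its restriction to $1 \otimes \tilde H_\ast(Z)$ matches the copy of $\tilde H_\ast(Z)$ identified in Proposition \ref{homF} --- i.e.\ bookkeeping the module structure through the construction of $\mu$ and its factorization through the half-smash, and ensuring the null-homotopy of $\Omega Q \to F$ from Lemma \ref{loopM} is used in a way that is genuinely compatible with the action map. The rest is a formal consequence of the homology computation already in hand.
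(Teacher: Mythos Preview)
Your proposal is correct and follows essentially the same approach as the paper: construct the map $\Omega Q \times Z \to F$ via the principal action and the lift of $\kappa$, factor through the half-smash using the null-homotopy from Lemma \ref{loopM}, and invoke Proposition \ref{homF} for the homology isomorphism. You supply more detail than the paper on the $H_\ast(\Omega Q)$-module bookkeeping and the final Whitehead step, but the argument is the same.
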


\begin{proof}
We fix a lift of $\kappa:Z \to M$ to $F$ and call if $j$. Also denote by $\lambda : \Omega Q \times F \to F$ the action of $\Omega Q$ on $F$. We have the composite 
$$\Omega Q \times Z \stackrel{id \times j}{\longrightarrow} \Omega Q\times F \stackrel{\lambda}{\rightarrow} F.$$
The restriction of the map to the subspace $\Omega Q\times \ast$ is the inclusion of the fibre of $\Omega Q \to F \to M$ which by Proposition \ref{loopM} is null-homotopic. Hence we obtain a map $\Omega Q \ltimes Z\to F$ which induces a homology isomorphism by Proposition \ref{homF}.  The result follows.
\end{proof}

We now apply the results proved above to deduce the loop space decomposition for $M$. 
\begin{theorem}\label{loopdecM}
Suppose $M$ is a $(n-1)$-connected $(2n+1)$-manifold with homology as in \eqref{Mhom} satisfying $r\geq 1$. Then we have a homotopy equivalence
$$\Omega M \simeq \Omega S^n \times \Omega S^{n+1} \times \Omega (Z\vee (Z\wedge \Omega (S^n \times S^{n+1})))$$
where $Z \simeq \vee_{r-1}S^n \vee_{r-1}S^{n+1} \vee M(G,n)$. 
\end{theorem}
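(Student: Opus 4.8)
The plan is to obtain the statement by assembling Lemma~\ref{loopM}, Lemma~\ref{decF}, and the weak equivalence $\Omega Q\simeq \Omega S^n\times\Omega S^{n+1}$ already established for the cofibre $Q$ of $\kappa$, together with the standard splitting $X\ltimes\Sigma A\simeq \Sigma A\vee(X\wedge\Sigma A)$ of a half-smash by a suspension.

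First I would invoke Lemma~\ref{loopM} to get a homotopy equivalence $\Omega M\simeq \Omega Q\times\Omega F$, and recall that the computation $H_\ast(\Omega Q)\cong\Z[u,v]$, combined with the composite $\Omega S^n\times\Omega S^{n+1}\to\Omega Q\times\Omega Q\to\Omega Q$ being a homology isomorphism, yields $\Omega Q\simeq\Omega S^n\times\Omega S^{n+1}=\Omega(S^n\times S^{n+1})$. This already accounts for the first two factors of the asserted product, so the remaining task is to identify $\Omega F$.

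Next I would use Lemma~\ref{decF}, which gives $F\simeq\Omega Q\ltimes Z$. The key point is that $Z=\vee_{r-1}S^n\vee_{r-1}S^{n+1}\vee M(G,n)$ is a suspension: since $n\geq 2$, each wedge summand is one (in particular $M(G,n)\simeq\Sigma M(G,n-1)$), so $Z\simeq\Sigma Z'$ with $Z'=\vee_{r-1}S^{n-1}\vee_{r-1}S^n\vee M(G,n-1)$. For any based space $X$ one has the standard decomposition $X\ltimes\Sigma Z'\simeq\Sigma Z'\vee(X\wedge\Sigma Z')$; it follows from the splitting $\Sigma(X_+)\simeq S^1\vee\Sigma X$ (the cofibre sequence $S^1\to\Sigma X_+\to\Sigma X$ has a section, and a retraction induced by the map $X_+\to X$ collapsing the added point to the basepoint) after smashing with $Z'$ and reidentifying $X_+\wedge\Sigma Z'=\Sigma(X_+)\wedge Z'$. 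Taking $X=\Omega Q$ gives $F\simeq Z\vee(Z\wedge\Omega Q)$, and substituting $\Omega Q\simeq\Omega(S^n\times S^{n+1})$ — legitimate since smashing is a homotopy functor — turns this into $F\simeq Z\vee\big(Z\wedge\Omega(S^n\times S^{n+1})\big)$.

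Finally I would combine the pieces:
$$\Omega M\simeq\Omega Q\times\Omega F\simeq(\Omega S^n\times\Omega S^{n+1})\times\Omega\big(Z\vee(Z\wedge\Omega(S^n\times S^{n+1}))\big),$$
which is exactly the claimed equivalence. I do not expect a genuine obstacle here, as the substantive work is already contained in Lemmas~\ref{loopM} and~\ref{decF}; the only points needing care are verifying that $Z$ is indeed a suspension, so that the half-smash splits, and keeping track of the fact that $\Omega Q$ may be replaced throughout by $\Omega(S^n\times S^{n+1})$ without affecting anything.
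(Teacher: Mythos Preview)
Your proposal is correct and follows essentially the same route as the paper: invoke Lemma~\ref{loopM} for $\Omega M\simeq\Omega Q\times\Omega F$ with $\Omega Q\simeq\Omega S^n\times\Omega S^{n+1}$, then Lemma~\ref{decF} for $F\simeq\Omega Q\ltimes Z$, and finally split the half-smash using that $Z$ is a suspension since $n\geq 2$. Your justification of the half-smash splitting via $\Sigma(X_+)\simeq S^1\vee\Sigma X$ is in fact more explicit than what the paper records.
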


\begin{proof}
From Lemma \ref{loopM} we have 
$$\Omega M \simeq \Omega F \times \Omega Q \simeq \Omega F \times \Omega S^n \times \Omega S^{n+1}.$$
From Lemma \ref{decF} we have 
$$F \simeq \Omega Q \ltimes Z \simeq \Omega Q_+ \wedge Z$$
with $Z$ as required. We note that $n\geq 2$ so that $Z$ is a suspension so that $X \ltimes Z \simeq X \wedge Z \vee Z$ for any based space $X$. This completes the proof.
\end{proof}

We note that in the expression above, $\Omega Z$ is a retract of $\Omega M$. If $M$ is rationally hyperbolic (that is $r\geq 2$) we note that $S^n \vee S^{n+1}$ is a wedge summand of $Z$, so that we have the corollary 
\begin{cor} \label{rethyp}
 Suppose $M$ is a $(n-1)$-connected $(2n+1)$-manifold with homology as in \eqref{Mhom} satisfying $r\geq 2$. Then, $\Omega (S^n \vee S^{n+1})$ is a retract of $\Omega M$. 
\end{cor}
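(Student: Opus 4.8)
The plan is to deduce Corollary \ref{rethyp} directly from Theorem \ref{loopdecM} together with the structure of the space $Z$, since essentially all the real work has already been done. First I would invoke Theorem \ref{loopdecM} to write $\Omega M \simeq \Omega S^n \times \Omega S^{n+1} \times \Omega(Z \vee (Z \wedge \Omega(S^n \times S^{n+1})))$, where $Z \simeq \vee_{r-1}S^n \vee_{r-1}S^{n+1} \vee M(G,n)$; in particular $\Omega Z$ is a retract of $\Omega M$ because for any based spaces $A, B$ the loop space $\Omega A$ is a retract of $\Omega(A \vee B)$ via the pinch and inclusion maps $A \vee B \to A \to A \vee B$ whose composite is the identity on $A$ (and a retract of a retract is a retract). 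So it suffices to show $\Omega(S^n \vee S^{n+1})$ is a retract of $\Omega Z$.

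Next, since $r \geq 2$ we have $r - 1 \geq 1$, so $Z$ splits off a wedge summand $S^n \vee S^{n+1}$: write $Z \simeq (S^n \vee S^{n+1}) \vee Z'$ where $Z' \simeq \vee_{r-2}S^n \vee_{r-1}S^{n+1} \vee M(G,n)$ (using one of the $n$-spheres and one of the $(n+1)$-spheres). Then by the same retraction argument for wedges, $\Omega(S^n \vee S^{n+1})$ is a retract of $\Omega Z$, hence a retract of $\Omega M$. This chains the two retractions together and completes the proof.

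Concretely, I would just exhibit the maps: let $q \colon Z \to S^n \vee S^{n+1}$ be the pinch map collapsing $Z'$ and $\iota \colon S^n \vee S^{n+1} \to Z$ the inclusion of the chosen wedge summand, so $q \circ \iota \simeq \mathrm{id}$; looping gives $\Omega q \circ \Omega \iota \simeq \mathrm{id}$. Composing with the retraction $\Omega M \to \Omega Z$ coming from Theorem \ref{loopdecM} (the projection onto the $\Omega Z \subset \Omega(Z \vee (Z \wedge \Omega(S^n\times S^{n+1})))$ factor, itself a retract) and the inclusion $\Omega Z \to \Omega M$ yields the desired retraction $\Omega M \to \Omega(S^n \vee S^{n+1}) \to \Omega M$.

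There is essentially no obstacle here: the only things to be careful about are (i) the elementary fact that loop spaces of wedge summands are retracts, which is standard, and (ii) making sure the combinatorics of the wedge decomposition of $Z$ is consistent with $r \geq 2$, which just requires $r - 1 \geq 1$. The substance of the argument is entirely contained in Theorem \ref{loopdecM}, so this corollary is a short formal consequence.
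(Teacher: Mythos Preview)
Your proposal is correct and follows exactly the same approach as the paper: deduce from Theorem~\ref{loopdecM} that $\Omega Z$ is a retract of $\Omega M$, observe that $r\geq 2$ makes $S^n\vee S^{n+1}$ a wedge summand of $Z$, and chain the retractions. (Minor typo: your $Z'$ should be $\vee_{r-2}S^n \vee_{r-2}S^{n+1}\vee M(G,n)$, not $\vee_{r-1}S^{n+1}$, but this is immaterial to the argument.)
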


\vspace*{0.5cm}

\end{document}